\newcommand\reallywidetilde[1]{\ThisStyle{%
  \setbox0=\hbox{$\SavedStyle#1$}%
  \stackengine{-.1\LMpt}{$\SavedStyle#1$}{%
    \stretchto{\scaleto{\SavedStyle\mkern.2mu\AC}{.5150\wd0}}{.6\ht0}%
  }{O}{c}{F}{T}{S}%
}}
\theoremstyle{plain}
\newtheorem{theorem}{Theorem}[section]
\newtheorem{proposition}[theorem]{Proposition}
\newtheorem{lemma}[theorem]{Lemma}
\newtheorem{corollary}[theorem]{Corollary}
\newtheorem{conjecture}[theorem]{Conjecture}
\theoremstyle{definition}
\newtheorem{remark}{Remark}
\newtheorem{example}{Example}
\newtheorem{definition}[theorem]{Definition}
\numberwithin{equation}{section}
\numberwithin{table}{section}
\numberwithin{figure}{section}
\DeclarePairedDelimiter\ceil{\lceil}{\rceil}
\begin{document}
	
	\title[The McKay Correspondence for Dihedral Groups]{The McKay correspondence for dihedral groups: The Moduli Space and the Tautological Bundles}
	
	
	\author{John Ashley Capellan}
	\address{Graduate School of Mathematics, Nagoya University, Furo-cho, Chikusa-ku, Nagoya, 464-8602, Japan}
	\curraddr{}
	\email{m20049e@math.nagoya-u.ac.jp}
	\thanks{}
	
	\subjclass[2010]{Primary 14D20, 14E16, 14J17}
	
	\date{}
	
	\dedicatory{}
	
	\commby{}
	
\begin{abstract}
A conjecture posed by Ishii states that for a finite subgroup $G$ of $\operatorname{GL}(2, \mathbb{C})$, a resolution $Y$ of $\mathbb{C}^2/G$ is isomorphic to a moduli space $\mathcal{M}_{\theta}$ of $G$-constellations for some generic stability parameter $\theta$ if and only if $Y$ is dominated by the maximal resolution. This paper affirms the conjecture in the case of dihedral groups as a class of complex reflection groups, and offers an extension of McKay correspondence of Ito-Nakamura for $\operatorname{SL}(2,\mathbb{C})$, and Ishii for small $\operatorname{GL}(2,\mathbb{C})$.
\end{abstract}

	\maketitle

\section{Introduction}

The classical McKay correspondence relates representations of a finite subgroup $G \subset \operatorname{SL}(2, \mathbb{C})$ to the dual graph of exceptional divisors of the minimal resolution of the quotient variety $\mathbb{C}^2/G$.

An algebro-geometric viewpoint of the correspondence was found by \cite{gonzverd} in the case of $G \subset \operatorname{SL}(2, \mathbb{C})$, via some locally free sheaves. In \cite{itonak1} and \cite{itonak2}, these locally free sheaves are realized as tautological bundles. An explicit description is obtained using the $G$-Hilbert scheme $G\operatorname{-Hilb}(\mathbb{C}^2)$ as the minimal (crepant) resolution of the quotient variety $\mathbb{C}^2/G$. The McKay correspondence of $\operatorname{SL}(2, \mathbb{C})$ was obtained by computing the minimal generators of the $G$-module $I/(\mathfrak{m} I + \mathfrak{n})$ of each $G$-cluster. This computation is related to the tops and socles of each $G$-cluster (which is defined in Definition \ref{topsoc}).

The aforementioned viewpoints of the correspondence (both \cite{gonzverd} and \cite{itonak1}) give a rigorous proof of the correspondence between representations of $G \subset \operatorname{SL}(2)$ and the exceptional divisors of the minimal resolution of $\mathbb{C}^2/G$. This correspondence can be generalized further into equivalences of derived categories.

A natural generalization of the McKay correspondence is an equivalence between the $G$-equivariant geometry of $\mathbb{C}^n$ and the geometry of a crepant resolution $Y$ of $\mathbb{C}^n/G$ expressed in the language of derived categories. When $Y \subset G\operatorname{-Hilb}(\mathbb{C}^n)$ is the irreducible component of $G\operatorname{-Hilb}(\mathbb{C}^n)$ which contains the open subset of all reduced $G$-clusters, the celebrated result of \cite{bridgelandkingreid} states some conditions that will determine that $\tau: Y \rightarrow \mathbb{C}^n/G$ is a crepant resolution and that $\Phi: D(Y) \rightarrow D^G(\mathbb{C}^n)$ given by the Fourier-Mukai transform gives a derived equivalence. More particularly and explicitly, for $n \leq 3$ and $G \subset \operatorname{SL}(n,\mathbb{C})$, $\Phi$ defines a derived equivalence.

However, the story does not stop here. The moduli space of $G$-clusters provides one candidate for a crepant resolution of the quotient variety $\mathbb{C}^n/G$. With the conjecture of Reid (Conjecture 4.1 of \cite{reid}) in mind, which states that if $\tau: Y \rightarrow \mathbb{C}^n/G$ is a crepant resolution, then $\Phi: D(Y) \xrightarrow{\sim} D^G(\mathbb{C}^n)$ for some derived equivalence $\Phi$, there is an ongoing search for such crepant resolutions. One of such candidates is the moduli space of $G$-constellations.

A generalization of Hilbert scheme of $G$-orbits is the moduli space of $G$-constellations (on an affine space) which are introduced in \cite{crawishii}. The moduli space depends on some stability parameter $\theta$ and the moduli space of $\theta$-stable $G$-constellations is denoted by $\mathcal{M}_{\theta}$. If $G$ is a subgroup of $\operatorname{SL}(n, \mathbb{C})$ acting on $\mathbb{C}^n$ and $n \leq 3$, then $\mathcal{M}_{\theta}$ is a crepant resolution of $\mathbb{C}^n/G$ for a generic stability parameter $\theta$. The main results in \cite{crawishii}, \cite{yamagishiabelian}, \cite{yamagishinonabelian} realize a (projective) crepant resolution $Y$ of $\mathbb{C}^3/G$ (for any finite subgroup $G \subset \operatorname{SL}(3, \mathbb{C})$) as a moduli space $\mathcal{M}_{\theta}$ of $G$-constellations for some generic stability parameter $\theta$. More precisely, there is a generic stability parameter $\theta$ such that $Y \cong \mathcal{M}_{\theta}$.

If we generalize from $\operatorname{SL}(2, \mathbb{C})$ to $\operatorname{GL}(2, \mathbb{C})$, which can be either small (i.e. which does not contain pseudoreflections) or non-small, we get a more general McKay correspondence.

The paper \cite{wunram} obtained an algebraic-geometric viewpoint of the correspondence in the case of small subgroups of $\operatorname{GL}(2, \mathbb{C})$. Other explicit descriptions of the correspondence, by derived functors and by the work of \cite{wunram}, were obtained in \cite{art1ishii}, using the $G$-Hilbert scheme $G\operatorname{-Hilb}(\mathbb{C}^2)$ as the minimal resolution of the quotient variety $\mathbb{C}^2/G$, and determined their tops and socles of each $G$-cluster recovering the Ito-Nakamura type of correspondence, i.e. the socles of a $G$-cluster recover the same $G$-modules as with the module $I/(\mathfrak{m} I + \mathfrak{n})$.

Since $\mathcal{M}_{\theta}$ is a resolution of $\mathbb{C}^2/G$, there is a fully faithful functor from $D(\mathcal{M}_{\theta}) \hookrightarrow D^G(\mathbb{C}^2)$. In relation to the DK hypothesis and the maximal resolution in \cite{kawamata}, Ishii posed the following conjecture (3.1) in \cite{art2ishii}:

\begin{conjecture}
	Let $G \subset \operatorname{GL}(2, \mathbb{C})$ be a finite subgroup and consider the quotient $X = \mathbb{C}^2/G$ with the boundary divisor $B$. For any resolution of singularities $\tilde{Y} \rightarrow X$, $\tilde{Y}$ is isomorphic to $\mathcal{M}_{\theta}$ for some generic stability parameter $\theta$ if and only if $\tilde{Y}$ is between the minimal and maximal resolution of $\mathbb{C}^2/G$, where the maximal resolution means the smooth variety which has unique maximal coefficients satisfying the inequality in Definition \ref{maxdef}.
\end{conjecture}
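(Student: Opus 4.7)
The plan is to treat the two implications separately and to reduce the general case to two interacting building blocks via the structure of finite subgroups of $\operatorname{GL}(2,\mathbb{C})$. By Chevalley--Shephard--Todd, the normal subgroup $N \trianglelefteq G$ generated by pseudoreflections satisfies $\mathbb{C}^2/N \cong \mathbb{C}^2$, and $H = G/N$ acts on this quotient as a small subgroup of $\operatorname{GL}(2,\mathbb{C})$. The singular part of $X = \mathbb{C}^2/G$ therefore comes from $H$, while $N$ contributes to the boundary divisor $B$. The coefficient inequality in Definition \ref{maxdef} measures exactly the slack that $B$ allows for additional divisors beyond those forced by the minimal resolution.

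For the only-if direction, assume $\tilde{Y} \cong \mathcal{M}_\theta$ for a generic $\theta$. The universal family on $\mathcal{M}_\theta$ produces tautological bundles $\{R_\rho\}_{\rho \in \operatorname{Irr}(G)}$ whose first Chern classes span $\operatorname{Pic}(\mathcal{M}_\theta)$, and their determinants pull back the canonical class of $X$ (corrected by a multiple of $B$) in a form computable linearly in $\theta$. Substituting this expression into the inequality of Definition \ref{maxdef} then forces $\tilde{Y}$ to lie below the maximal resolution. Domination of the minimal resolution is automatic since $\mathcal{M}_\theta$ is already a smooth proper birational model of $X$.

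For the if direction, which is where the substance lies, the approach is variation of GIT. The parameter space $\Theta = \{\theta : \sum_\rho \theta_\rho \dim\rho = 0\}$ admits a wall-and-chamber decomposition in which each generic chamber yields an isomorphism class of $\mathcal{M}_\theta$ and wall-crossings perform birational modifications. The plan is to (i) start from Ishii's realization of the minimal resolution for small $G$, extended by pseudoreflection-twisted $\theta$ to cover arbitrary $G$; (ii) identify chambers realizing every crepant-type divisor admitted up to the maximal bound, by perturbing $\theta$ along irreducibles whose Wunram sheaves are non-special; and (iii) absorb the pseudoreflection contribution by perturbing along irreducibles that restrict non-trivially to $N$. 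The outcome should be an explicit dictionary between chambers of $\Theta$ and smooth varieties between the two extremal resolutions.

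The main obstacle is the exhaustion in step (ii): showing that every intermediate resolution allowed by Definition \ref{maxdef} is genuinely realized by some chamber of $\Theta$, rather than merely appearing as a flop or a more general birational model outside the image of the GIT construction. This demands a case-by-case matching of the McKay quiver of $G$, the dual graph of the maximal resolution, and the stratification of the exceptional set. The present paper carries out this matching in full for dihedral $G$, supplying the first non-abelian, non-small confirmation of the conjecture; the general case should proceed by the same template applied to each of the classified small subgroups combined with the pseudoreflection twist, but a uniform formulation in terms of the McKay quiver of $G$ remains to be written down.
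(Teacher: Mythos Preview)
The statement you are addressing is a \emph{conjecture} in the paper, not a theorem; the paper does not attempt a proof for general $G \subset \operatorname{GL}(2,\mathbb{C})$. What the paper actually proves is the special case $G = D_{2n}$ (Theorem~\ref{maxmain}), and its method there is quite different from your outline. Rather than varying $\theta$ directly inside the two-dimensional $\Theta$-space and tracking wall-crossings, the paper embeds $D_{2n} \subset \operatorname{GL}(2,\mathbb{C})$ into $\operatorname{SL}(3,\mathbb{C})$ via the determinant, so that the surface $\mathcal{M}_\theta(\mathbb{C}^2)$ sits inside the threefold $\mathcal{M}_\theta(\mathbb{C}^3)$. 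It then invokes the explicit chain of flops between crepant resolutions of $\mathbb{C}^3/D_{2n}$ worked out by Nolla de Celis--Sekiya, and checks open-set by open-set that each threefold flop restricts on the embedded surface to the blow-down of a $(-1)$-curve. The ``if'' direction is thus obtained not by an abstract VGIT dictionary but by borrowing a known three-dimensional classification and reading off its two-dimensional trace.

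Your proposal, by contrast, is a programme rather than a proof, and it has genuine gaps even as a programme. For the only-if direction, the claim that the Chern classes of the tautological bundles, combined with a linear expression in $\theta$, force the discrepancy inequality of Definition~\ref{maxdef} is not substantiated: you have not said which relation among $c_1(R_\rho)$, $K_X$, and $B$ you intend to use, nor why it yields $a_i \le 0$ for every exceptional divisor. The actual mechanism in the literature goes through the fully faithful embedding $D^b(\mathcal{M}_\theta) \hookrightarrow D^b_G(\mathbb{C}^2)$ and the DK hypothesis, not through a Chern-class computation. For the if direction, your step~(ii) is exactly the hard part of the conjecture, and you concede as much; identifying which walls of $\Theta$ produce blow-ups rather than isomorphisms or more exotic modifications is the whole content, and the Chevalley--Shephard--Todd splitting $N \trianglelefteq G$ with $G/N$ small does not by itself organise the chambers of $\Theta$ for $G$. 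In short, your outline names the right ingredients but does not supply the mechanism that the paper supplies (in the dihedral case) via the three-dimensional detour.
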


So far, this conjecture is solved in the cases of abelian subgroups and small subgroups of $\operatorname{GL}(2, \mathbb{C})$.

It is natural to ask if it is possible to formulate the explicit descriptions of the McKay correspondence in the case of complex reflection groups. This case is particularly interesting because the quotient variety $\mathbb{C}^2/G$ is isomorphic to $\mathbb{C}^2$ itself in which the minimal resolution is the identity map which reveals no data about the exceptional divisors. Hence, we consider the aforementioned maximal resolution in the hopes of recovering a McKay correspondence. In this paper, we offer an explicit description of the McKay correspondence in the case of dihedral groups via its derived equivalence.

\textit{Notation:} Let $D_{2n} = \left\langle \tau :=  \begin{bmatrix}
	0 & 1 \\
	1 & 0
\end{bmatrix}, \sigma := \begin{bmatrix}
	\epsilon & 0 \\
	0 & \epsilon^{-1}
\end{bmatrix} (\epsilon^n = 1)  \right\rangle \subset \operatorname{GL}(2,\mathbb{C})$ be a dihedral group of order $2n$ embedded in the general linear group $\operatorname{GL}(2, \mathbb{C})$. Unless explicitly stated, $G$ is the dihedral group $D_{2n}$. The representations of $D_{2n}$ are $\rho_0$ which is the trivial representation; $\rho_0', \rho_{n/2}, \rho_{n/2}'$ are non-trivial $1$-dimensional representations; and $\rho_j (j \neq 0, 0', n/2, n/2')$ are $2$-dimensional representations. The character table is as follows: 

\begin{table}[H]
	\centering
	\caption{The Irreducible Representations of $D_{2n}$}
	\label{tab:my_label}
	\begin{tabular}{c|c|c|c}
		\backslashbox{\mbox{Representation}}{\mbox{Conjugacy Class}}  & $1$ & $\tau$ & $\sigma^i$ \\\hline
		$\rho_0$ & $1$ & $1$ & $1$\\
		$\rho_0'$ & $1$ & $-1$ & $1$\\
		$\rho_j$ & $2$ & $0$ & $\epsilon^{ij} + \epsilon^{-ij}$\\
		$\rho_{n/2}$ \mbox{($n$ even)} & $1$ & $1$ & $(-1)^i$ \\
		$\rho_{n/2}'$ \mbox{($n$ even)} & $1$ & $-1$ & $(-1)^i$
	\end{tabular}
	
	where $1 \leq j \leq \frac{n-1}{2}$ and $0 \leq i \leq n-1$ are integers.
\end{table}

The main results of this paper are the following:

\begin{theorem}[Theorem \ref{maxemb}]
	The maximal resolution $Y_{max}$ of $(\mathbb{C}^2/G, \hat{B})$, defined as the smooth variety which has unique maximal coefficients satisfying the inequality in Definition \ref{maxdef}, is isomorphic to the quotient variety 
	
	$\mathbb{Z}_n\operatorname{-Hilb}(\mathbb{C}^2)/\mathbb{Z}_2 := \langle \sigma \rangle\operatorname{-Hilb}(\mathbb{C}^2)/(D_{2n}/\langle \sigma \rangle)$, where $\hat{B}$ is a $\mathbb{Q}$-divisor defined by the equation $K_{\mathbb{C}^2} = \pi^*(K_{\mathbb{C}^2/D_{2n}} + \hat{B})$, and $\pi: \mathbb{C}^2 \rightarrow \mathbb{C}^2/D_{2n}$ is the projection map. It is also the minimal embedded resolution of $(\mathbb{C}^2/G, \hat{B})$.
\end{theorem}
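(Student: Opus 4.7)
The plan rests on the normal subgroup relation $\langle\sigma\rangle \triangleleft D_{2n}$ with quotient $D_{2n}/\langle\sigma\rangle \cong \mathbb{Z}_2$ generated by the class of $\tau$. Since $\sigma \in \operatorname{SL}(2,\mathbb{C})$, the Hilbert scheme $Z := \mathbb{Z}_n\operatorname{-Hilb}(\mathbb{C}^2)$ is the minimal crepant resolution of the $A_{n-1}$ singularity $\mathbb{C}^2/\langle\sigma\rangle$. Because $\tau$ normalizes $\langle\sigma\rangle$ (conjugation sends $\sigma \mapsto \sigma^{-1}$), its action on $\mathbb{C}^2$ descends to $\mathbb{C}^2/\langle\sigma\rangle$ and, by functoriality of the $\mathbb{Z}_n$-Hilbert scheme, lifts uniquely to an involution on $Z$; taking the quotient gives a projective birational morphism $f: Y := Z/\mathbb{Z}_2 \to \mathbb{C}^2/D_{2n}$. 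The remaining task is to verify that $Y$ is smooth, that its discrepancies saturate the inequality of Definition \ref{maxdef}, and that $f$ is the minimal embedded resolution of $(\mathbb{C}^2/G, \hat{B})$.

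For smoothness I would analyze the $\tau$-action on the standard toric charts of $Z$. On the dual lattice, $\tau$ swaps the coordinate axes, hence permutes the exceptional chain by $E_i \leftrightarrow E_{n-i}$; swapped pairs of charts descend to a single chart (automatically smooth), while the middle chart containing $E_{n/2}$ is $\tau$-invariant when $n$ is even. The fixed-point analysis then splits into two cases: for $n$ even, the fixed points on $Z$ are the two intersections of $E_{n/2}$ with the strict transforms of the two branch components of $\hat{B}$; for $n$ odd, they comprise the intersection $E_{(n-1)/2} \cap E_{(n+1)/2}$ together with its meeting point with the single branch strict transform. In every case, the differential of $\tau$ at a fixed point has eigenvalues $(+1,-1)$, the $+1$-eigenspace being tangent to a smooth fixed curve (a branch strict transform, or, via a direct tangent-space computation for the swapped pair, the diagonal direction in $T_pE_{(n-1)/2} \oplus T_pE_{(n+1)/2}$). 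Chevalley--Shephard--Todd applied locally yields smoothness of $Y$.

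For the discrepancy computation, I would combine the crepancy $K_Z = \pi_{\mathrm{res}}^* K_{\mathbb{C}^2/\langle\sigma\rangle}$ with the Hurwitz identity $K_Z = q^* K_Y + R_q$ for the quotient $q: Z \to Y$, where $R_q$ is the reduced codimension-one fixed locus of $\tau$, and with the two-step factorization $\mathbb{C}^2 \to \mathbb{C}^2/\langle\sigma\rangle \to \mathbb{C}^2/D_{2n}$ that defines $\hat{B}$. This yields each coefficient $a_i$ in $K_Y = f^*(K_{\mathbb{C}^2/D_{2n}} + \hat{B}) + \sum a_i \bar E_i$ explicitly. I would then verify that these coefficients saturate the inequality of Definition \ref{maxdef}; since $Y_{max}$ is characterized as the unique smooth model with maximal coefficients, this forces $Y \cong Y_{max}$. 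For the minimal embedded resolution claim, the exceptional locus of $f$ is a chain of smooth rational curves (image of the $A_{n-1}$ chain under $q$), and the strict transforms of the branch components of $\hat{B}$ meet this chain transversally at the fixed points identified above, giving the SNC property for $f^{-1}(\hat B)_{\mathrm{red}} + \operatorname{Exc}(f)$; each exceptional curve is indispensable to SNC, so the resolution is minimal embedded. The principal obstacle in this program is the discrepancy bookkeeping, which must be carried out separately in the cases $n$ even and $n$ odd because of the differing $\tau$-orbit combinatorics on the exceptional chain and the consequent difference in the branch geometry of $\hat{B}$.
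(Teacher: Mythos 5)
Your program is essentially the paper's own: Proposition \ref{smth} proves smoothness of $Y_1=\mathbb{Z}_n\operatorname{-Hilb}(\mathbb{C}^2)/\mathbb{Z}_2$ by exactly your route (the fixed locus of the lifted involution is the divisor formed by the strict transforms of the branch curves, so $\mathbb{Z}_2$ acts by pseudoreflections and Chevalley--Shephard--Todd applies), Proposition \ref{crep41} carries out your discrepancy bookkeeping and finds all $a_i=0$, and Remark \ref{oddboundary} performs the explicit further--blow-up computation that certifies maximality. Two points in your write-up need repair. First, the assertion that the strict transforms of the branch components meet the exceptional chain transversally, giving an SNC configuration, fails for odd $n$: in the invariant chart $\operatorname{Spec}\mathbb{C}\left[xy,\,f_1/(xy)^m\right]$ the strict transform of $\hat B_3$ is $v^2=4u$ while the exceptional curve $E_m$ is $\{u=0\}$, so they meet with intersection multiplicity $2$, i.e.\ tangentially. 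This does not damage the conclusion --- the boundary curve is still smooth and carries coefficient $\tfrac12$, so a blow-up centered on it has discrepancy $1-\tfrac12=\tfrac12>0$, and the paper's notion of minimal embedded resolution asks only that the strict transform of $\hat B$ be smooth, not that it be SNC with the exceptional locus --- but the transversality claim as stated is false and your minimality argument should not rely on it. Second, the step ``the coefficients saturate the inequality, hence $Y\cong Y_{max}$'' is too quick: Definition \ref{maxdef} is a two-part condition, and having all $a_i=0$ must be supplemented by verifying that every proper birational morphism $Z\to Y$ that is not an isomorphism produces some $b_j>0$. Your ingredients (smooth boundary strict transform, boundary coefficient $\tfrac12<1$, exceptional coefficients $0$) do yield this, since any further exceptional divisor then has discrepancy at least $\tfrac12$, but that computation has to be made explicit, as the paper does in Remark \ref{oddboundary}.
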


The minimal embedded resolution of $(\mathbb{C}^2/G, \hat{B})$ is the smooth surface obtained after the least number of monoidal transformations such that the strict transform of (the support of) $\hat{B}$ is smooth.

\begin{theorem}[Theorem \ref{maxmain}]
	A resolution of singularities $Y \rightarrow \mathbb{C}^2/D_{2n}$ is isomorphic to $\mathcal{M}_{\theta}$ for some generic $\theta$ if and only if $Y$ is dominated by the maximal resolution of the pair $(\mathbb{C}^2/D_{2n}, \hat{B})$.
\end{theorem}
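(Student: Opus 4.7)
The plan is to prove both implications by combining Theorem \ref{maxemb} with a variation of GIT analysis of the parameter space $\Theta$ of stability conditions for $D_{2n}$-constellations. Throughout, I would parametrize $\Theta$ by $\{\theta=(\theta_\rho) : \sum_\rho \dim(\rho)\,\theta_\rho=0\}$, so that $\theta$ is generic when no non-trivial proper subrepresentation of the regular representation has $\theta$-total zero. Genericity produces a locally closed chamber decomposition of $\Theta$, and on each chamber $\mathcal{M}_\theta$ is constant and is a (projective) resolution of $\mathbb{C}^2/D_{2n}$ by the general theory of \cite{crawishii}.

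For the \emph{only if} direction I would argue as follows. Given a generic $\theta$, $\mathcal{M}_\theta$ carries the tautological bundle $\mathcal{R}=\bigoplus_\rho \mathcal{R}_\rho\otimes\rho^{\vee}$ and the Fourier--Mukai kernel realized by the universal $G$-constellation gives a fully faithful functor $\Phi_\theta: D(\mathcal{M}_\theta)\to D^G(\mathbb{C}^2)$. By the DK hypothesis and Kawamata's formulation of maximal resolutions recalled before Definition \ref{maxdef}, any such $\Phi_\theta$ forces the discrepancy divisors on $\mathcal{M}_\theta$ relative to $(\mathbb{C}^2/D_{2n},\hat{B})$ to satisfy the inequality defining the maximal resolution. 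Consequently $\mathcal{M}_\theta$ sits between the minimal and maximal resolutions; in particular it is dominated by $Y_{\max}$. This step is largely an application of the already established general principle of Ishii (as in \cite{art2ishii}) once the functor $\Phi_\theta$ is verified to be fully faithful, which follows from the smoothness of $\mathcal{M}_\theta$ in our $2$-dimensional setting.

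For the \emph{if} direction I would exploit the identification $Y_{\max}\cong \mathbb{Z}_n\operatorname{-Hilb}(\mathbb{C}^2)/\mathbb{Z}_2$ from Theorem \ref{maxemb}. First, I would fix a distinguished chamber $C_0\subset\Theta$ whose moduli space equals $Y_{\max}$; its existence I would establish by pulling back Kedzierski/Ishii-type stability parameters from the $\mathbb{Z}_n$-case and symmetrizing under the $D_{2n}/\langle\sigma\rangle$-action, exactly matching the construction of the quotient $\mathbb{Z}_n\operatorname{-Hilb}/\mathbb{Z}_2$. Second, for any resolution $Y$ dominated by $Y_{\max}$, I would use the known classification of intermediate resolutions (a partial ordering on the set of exceptional curves contracted by $Y\to \mathbb{C}^2/D_{2n}$) to identify $Y$ with the image of $Y_{\max}$ under contracting a specific subset $S$ of exceptional divisors. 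I would then construct $\theta_Y$ explicitly by moving from $C_0$ across exactly those walls corresponding to the divisors in $S$: each wall-crossing contracts the associated exceptional curve, and by the smoothness of the resulting moduli $\mathcal{M}_{\theta_Y}$ must coincide with $Y$.

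The main obstacle I expect is the last wall-crossing step: controlling precisely which exceptional divisor gets contracted at each wall of the dihedral parameter space, because the non-abelian $2$-dimensional representations $\rho_j$ couple the geometry on different exceptional curves in a way absent from the abelian case. To handle this I would compute the destabilizing subrepresentations at each wall using the tops-and-socles analysis of $D_{2n}$-clusters developed earlier in the paper, together with the explicit covering $\mathbb{Z}_n\operatorname{-Hilb}(\mathbb{C}^2)\to \mathbb{Z}_n\operatorname{-Hilb}(\mathbb{C}^2)/\mathbb{Z}_2$, so that the $D_{2n}$-wall structure is the $\mathbb{Z}_2$-invariant part of the $\mathbb{Z}_n$-wall structure. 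Once this matching is verified, the bijection between chambers of generic $\theta$ and resolutions dominated by $Y_{\max}$ follows, completing both directions simultaneously.
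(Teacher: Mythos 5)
Your \emph{only if} direction is essentially what the paper relies on: the fully faithful functor $\Phi_\theta$ from Theorem 3 of \cite{art2ishii}, combined with the DK-hypothesis of \cite{kawamata}, forces the discrepancies of $\mathcal{M}_\theta$ over $(\mathbb{C}^2/D_{2n},\hat B)$ to satisfy the inequality of Definition \ref{maxdef}, so $\mathcal{M}_\theta$ is dominated by $Y_{max}$. That half is fine and matches the paper.

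For the \emph{if} direction you take a genuinely different route, and it contains a real gap. The paper performs no wall-crossing in the two-dimensional parameter space $\Theta$ at all. Instead it embeds $D_{2n}$ into $\operatorname{SL}(3,\mathbb{C})$, identifies $Y_{max}\cong \mathbb{Z}_2\operatorname{-Hilb}(\mathbb{Z}_n\operatorname{-Hilb}(\mathbb{C}^2))$ as a subscheme of the iterated Hilbert scheme $X_{0\dots(m-1)}=\mathbb{Z}_2\operatorname{-Hilb}(\mathbb{Z}_n\operatorname{-Hilb}(\mathbb{C}^3))$, and imports the already-computed chain of flops and the explicit generic parameters $\theta^i$ from Theorems 5.1, 5.2 and 6.4 of \cite{nollasekiya}, together with \cite{ishiiitonolla} for the identification of iterated Hilbert schemes with moduli of constellations. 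The key geometric observation is that each flop of a $(-1,-1)$-curve in the threefold restricts, on the embedded surface, to the blow-down of a $(-1)$-curve; a counting argument (the number of flops equals the number of exceptional divisors of $f_2$) combined with Corollary \ref{resev} then shows that the surfaces $Y_i'=\mathcal{M}_{\theta^i}(\mathbb{C}^2)$ exhaust all resolutions dominated by $Y_{max}$. All of the hard chamber analysis is thus outsourced to the three-dimensional results.

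The gap in your version is exactly the step you flag as the main obstacle. You assert that crossing each wall out of $C_0$ contracts the associated exceptional curve, but in the VGIT of $G$-constellations not every wall corresponds to a contraction: some walls change only the universal family and leave the coarse moduli space unchanged, and deciding which walls are of which type is the substantive content of the theorem, not a routine verification. Your proposed fix --- reading off destabilizing subrepresentations from the tops-and-socles analysis and claiming the $D_{2n}$-wall structure is the $\mathbb{Z}_2$-invariant part of the $\mathbb{Z}_n$-wall structure --- is both unproven and circular in the context of this paper: the tops-and-socles computations appear in Section 5 and are derived \emph{using} the realization of $Y_{max}$ as a moduli space, i.e., using the theorem you are trying to prove. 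In addition, the existence of the distinguished chamber $C_0$ with $\mathcal{M}_{C_0}\cong Y_{max}$ is not obtained by ``symmetrizing'' $\mathbb{Z}_n$-parameters; the paper gets it from $Y_{max}\cong Y=\mathbb{Z}_2\operatorname{-Hilb}(X_1)$ and Theorem 1.5 of \cite{ishiiitonolla}. To repair your argument you would either need to carry out the full wall and chamber classification for $D_{2n}$-constellations on $\mathbb{C}^2$ from scratch, or do what the paper does and deduce the answer from the three-dimensional flop picture.
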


\begin{remark}\label{resmoth}
	We point out here that $\mathbb{C}^2/D_{2n}$ is non-singular. The notion of resolution $Y \xrightarrow{f} \mathbb{C}^2/D_{2n}$ is a proper birational morphism from the smooth variety $Y$.
\end{remark}

Section 3 is devoted to the proofs of the first two theorems. The aforementioned theorems are proved by explicitly tracing the by-product of the quotient variety $\mathbb{C}^2/G \cong G\operatorname{-Hilb}(\mathbb{C}^2)$ as a subscheme in $G\operatorname{-Hilb}(\mathbb{C}^3)$ under flopping operations via \cite{nollasekiya}, which will show that all of the two-dimensional counterparts can be realized as a moduli space of $D_{2n}$-constellations $\mathcal{M}_{\theta}$ for some generic stability parameter $\theta$. Both abstract and explicit (via open set) approaches are presented here.

In the last two sections of this article, we offer two lenses of establishing the McKay correspondence: from the tautological bundles and from the tops and socles of $D_{2n}$-constellation. In particular, for both lenses, we apply the results shown in Section 3 that the maximal resolution can be realized as a moduli space of $D_{2n}$-constellations. 

In Section 4, we use the aforementioned results to construct a similar McKay correspondence from the work of \cite{artverd}. We also show some defects of considering tautological vector bundles over the coarse moduli space, making the quotient stack the better venue to establish the correspondence.

We define the stack associated to the maximal resolution $\mathcal{Y} := [Y_{max}]$ realized as the $2$nd root stack 
$$\mathcal{Y} = \sqrt{(O_{Y_{max}}(\mathbb{B}), 1_{\mathbb{B}})/(Y_{max})},$$
where $B$ is the boundary divisor or the strict transform of $\hat{B}$, $(f_2)_*^{-1}(\hat{B})$, under the maximal resolution $f_2: Y_{max} \rightarrow \mathbb{C}^2/G$ and $\mathbb{B}:= \ceil*{B}$ (for prime divisors $B_{\alpha}$ of $B = \Sigma_{\alpha} b_{\alpha} B_{\alpha}$, $\ceil*{B} := \ceil*{b_{\alpha}}B_{\alpha}$); the global section $1_{\mathbb{B}}$ is induced by the inclusion of divisors $O_{Y_{max}} \hookrightarrow O_{Y_{max}}(\mathbb{B})$. 

We refer to Section 2.2 of \cite{cadman} for the detailed definition of the root stack.
Explicitly, the objects of $\mathcal{Y}$ over a scheme $S$ are quadruples $(f,M,t,\phi)$, where $f: S \rightarrow Y_{max}$ is a morphism, $M$ is an invertible sheaf on $S$, $t \in \Gamma(S,M)$, and $\phi: M^{\otimes 2} \rightarrow f^*(O_{Y_{max}}(\mathbb{B}))$ is an isomorphism such that $\phi(t^2) = f^*(1_{\mathbb{B}})$.

By Theorem \ref{maxemb}, we obtain the isomorphism between stacks: $$\mathcal{Y} \cong [\mathbb{Z}_n\operatorname{-Hilb}(\mathbb{C}^2)/\mathbb{Z}_2].$$
We have the Fourier-Mukai transforms (defined in Section 4):
\begin{align*}
	\Phi: D([\mathbb{C}^2/D_{2n}]) &\rightarrow D(\mathcal{Y}) \\
	\mathfrak{G} &\mapsto \phi_*(Rp_{[\mathbb{Z}_n\operatorname{-Hilb}(\mathbb{C}^2)/D_{2n}]*} (p_{[\mathbb{C}^2/D_{2n}]}^*(\mathfrak{G}) \otimes O_{[\mathcal{Z}/D_{2n}]}))\\
	\Psi: D(\mathcal{Y}) &\rightarrow D([\mathbb{C}^2/D_{2n}]) \\
	\epsilon &\mapsto R(p_{[\mathbb{C}^2/D_{2n}]*}) (p_{[\mathbb{Z}_n\operatorname{-Hilb}(\mathbb{C}^2)/D_{2n}]}^*(\phi^*(\epsilon)) \otimes det(\rho_{nat}) \\
	& \otimes O_{[\mathcal{Z}/D_{2n}]}^{\lor}[2])
\end{align*}

The functors $\Psi$ and $\Phi$ are equivalences via Theorem 4.1 of \cite{ishiiueda}.

We define the tautological sheaf associated to a representation $\rho$ of $D_{2n}$ as $$\hat{\mathcal{R}}_{\rho} := \Phi(O_{\mathbb{C}^2} \otimes \rho^{\lor}).$$

\begin{theorem}[Theorem \ref{mainthm2}]
	The tautological bundles on the stack $\mathcal{Y}$ are described by the following: 
	\begin{center}
		\captionof{table}{The Tautological Bundles for odd $n$}
		\begin{tabular}{|c c c|} 
			\hline
			Tautological Sheaf & Description & Chern Class \\ [0.5ex] 
			\hline
			$\hat{\mathcal{R}}_{\rho_0}$ & $\mathcal{O}_{\mathcal{Y}}$ & $0$ \\ 
			\hline
			$\hat{\mathcal{R}}_{\rho_0'}$ & $\mathcal{O}_{\mathcal{Y}}(\mathcal{B}_3 - \mathcal{D})$ & $\mathcal{B}_3 - \mathcal{D}$ \\
			\hline
			$\hat{\mathcal{R}}_{\rho_i}$ (rank 2) & $0 \rightarrow \mathcal{O}_{\mathcal{Y}} \xrightarrow{i} \hat{\mathcal{R}}_{\rho_i} \xrightarrow{pr} \mathcal{O}_{\mathcal{Y}}(\mathcal{D}_i + \mathcal{B}_3 - \mathcal{D}) \rightarrow 0$ & $\mathcal{D}_i + \mathcal{B}_3 - \mathcal{D}$ \\
			\hline
		\end{tabular}
		
		\captionof{table}{The Tautological Bundles for even $n$}
		\begin{tabular}{|c c c|} 
			\hline
			Tautological Sheaf & Description & Chern Class \\ [0.5ex] 
			\hline
			$\hat{\mathcal{R}}_{\rho_0}$ & $\mathcal{O}_{\mathcal{Y}}$ & $0$ \\ 
			\hline
			$\hat{\mathcal{R}}_{\rho_0'}$ & $\mathcal{O}_{\mathcal{Y}}(\mathcal{B}_1 -\mathcal{B}_2)$ & $\mathcal{B}_1 -\mathcal{B}_2$ \\
			\hline
			$\hat{\mathcal{R}}_{\rho_i}$ (rank 2) & $0 \rightarrow \mathcal{O}_{\mathcal{Y}} \xrightarrow{i} \hat{\mathcal{R}}_{\rho_i} \xrightarrow{pr} \mathcal{O}_{\mathcal{Y}}(\mathcal{D}_i + \mathcal{B}_1 - \mathcal{B}_2) \rightarrow 0$ & $\mathcal{D}_i + \mathcal{B}_1 - \mathcal{B}_2$ \\
			\hline
			$\hat{\mathcal{R}}_{\rho_{(n/2)}}$ & $\mathcal{O}_{\mathcal{Y}}(\mathcal{B}_1)$ & $\mathcal{B}_1$ \\
			\hline
			$\hat{\mathcal{R}}_{\rho_{(n/2)}'}$ & $\mathcal{O}_{\mathcal{Y}}(\mathcal{B}_2)$ & $\mathcal{B}_2$ \\ [1ex] 
			\hline
		\end{tabular}
	\end{center}
	where $\pi: \mathcal{Y} \rightarrow Y_{max}$ is the morphism to the coarse moduli space,
	
	$p: \mathbb{Z}_n\operatorname{-Hilb}(\mathbb{C}^2) \rightarrow \mathbb{Z}_n\operatorname{-Hilb}(\mathbb{C}^2)/\mathbb{Z}_2$ is the projection,
	
	$\mathcal{B}_i$ is a prime divisor on $\mathcal{Y}$ such that $2\mathcal{B}_i = \pi^{-1}(B_i)$ (the stacky locus),
	
	$\mathcal{D} := \pi^{-1}(D)$, where $D$ is a prime divisor of $Y_{max}$ that satisfies all of the following properties: (1) $D$ does not coincide with $B_i$ for all $i$, (2) $D$ is transversal to the exceptional divisor intersecting $B_1$ and $B_2$ (or $B_3$), and (3) $D \cdot E_j = 0, j \neq m$, and 
	
	$\mathcal{D}_i := \pi^{-1}(D_i)$ with $D_i := p(\tilde{D}_i + g \cdot \tilde{D}_i)$, whose $\tilde{D}_i$ is a transversal divisor to an exceptional divisor $\tilde{E}_i$ of the minimal resolution $\mathbb{Z}_n\operatorname{-Hilb}(\mathbb{C}^2) \rightarrow \mathbb{C}^2/\mathbb{Z}_n$. (The abuse of notation is due to Theorem \ref{maxemb}.)
	
	The rank one tautological bundles on the stack are uniquely determined by their Chern classes; and the rank two tautological bundles are determined by an extension of two line bundles. Furthermore, there is only one possible (non-trivial) extension class, making these descriptions unique.
\end{theorem}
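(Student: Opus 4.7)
The plan is to transfer the computation to the quotient stack presentation $\mathcal{Y} \cong [\mathbb{Z}_n\operatorname{-Hilb}(\mathbb{C}^2)/\mathbb{Z}_2]$ provided by Theorem \ref{maxemb}, then reduce everything to the classical cyclic McKay correspondence for $\mathbb{Z}_n = \langle\sigma\rangle$ followed by descent under $\mathbb{Z}_2 = D_{2n}/\langle\sigma\rangle$. The key representation-theoretic input is the restriction $\operatorname{Res}_{\mathbb{Z}_n}^{D_{2n}}$: $\rho_0,\rho_0'$ both restrict to the trivial character; $\rho_j$ restricts to $\chi_j\oplus\chi_{-j}$ with $\tau$ swapping the summands; and for $n$ even, $\rho_{n/2},\rho_{n/2}'$ both restrict to $\chi_{n/2}$, on which $\tau$ acts by $\pm 1$. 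This dictates which $\hat{\mathcal{R}}_{\rho}$ are rank one (trivial or $\tau$-fixed restriction) versus rank two (swapped pair).

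For the rank-one bundles I would apply the Ito--Nakamura identification of the tautological line bundle $\mathcal{R}_{\chi_k}$ on $\mathbb{Z}_n\operatorname{-Hilb}(\mathbb{C}^2)$ via its intersection numbers with the exceptional chain, and then take the $\mathbb{Z}_2$-equivariant descent to $\mathcal{Y}$. The identifications $\hat{\mathcal{R}}_{\rho_0} = \mathcal{O}_{\mathcal{Y}}$ and, for even $n$, $\hat{\mathcal{R}}_{\rho_{n/2}} = \mathcal{O}_{\mathcal{Y}}(\mathcal{B}_1)$ and $\hat{\mathcal{R}}_{\rho_{n/2}'} = \mathcal{O}_{\mathcal{Y}}(\mathcal{B}_2)$ follow because $\chi_{n/2}$ is $\tau$-stable and its descent splits into the two $\pm 1$ eigen-line bundles supported on the two stacky components, which by the geometric dictionary of Section 3 are $\mathcal{B}_1$ and $\mathcal{B}_2$. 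The twist $\mathcal{B}_3 - \mathcal{D}$ for $\hat{\mathcal{R}}_{\rho_0'}$ (odd $n$) and $\mathcal{B}_1 - \mathcal{B}_2$ (even $n$) are read off by tracking the sign character of $\tau$ along the stacky locus. Uniqueness of the rank-one description is immediate because $\operatorname{Pic}(\mathcal{Y})$ is generated by pullbacks from $Y_{max}$ together with the half-generators $\mathcal{B}_i$, and these classes are pinned down by the Chern-class data.

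For the rank-two $\hat{\mathcal{R}}_{\rho_j}$ I would use $\rho_j = \operatorname{Ind}_{\mathbb{Z}_n}^{D_{2n}}\chi_j$ and the compatibility of $\Phi$ with induction: the underlying sheaf is the $\mathbb{Z}_2$-descent of $\mathcal{R}_{\chi_j}\oplus\mathcal{R}_{\chi_{-j}}$ equipped with the $\tau$-swap. The $\tau$-invariant subsheaf produces the inclusion $\mathcal{O}_{\mathcal{Y}} \hookrightarrow \hat{\mathcal{R}}_{\rho_j}$, and a Chern-class comparison using the Wunram-type intersection conditions $c_1(\hat{\mathcal{R}}_{\rho_j})\cdot E_k = \delta_{jk}$ identifies the quotient as $\mathcal{O}_{\mathcal{Y}}(\mathcal{D}_i + \mathcal{B}_3 - \mathcal{D})$ (respectively the even-$n$ analogue). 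To verify that the extension class is non-trivial and unique up to scalar, I would compute $\operatorname{Ext}^1(\mathcal{O}_{\mathcal{Y}}(\mathcal{D}_i + \mathcal{B}_3 - \mathcal{D}), \mathcal{O}_{\mathcal{Y}})$ via the Leray spectral sequence for $\pi:\mathcal{Y}\to Y_{max}$ and Riemann--Roch on the rational surface $Y_{max}$, showing the group is one-dimensional; non-triviality is forced because local freeness of $\hat{\mathcal{R}}_{\rho_j}$ at the $\tau$-fixed exceptional points rules out the split extension.

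The main obstacle is the bookkeeping that links the stacky boundary divisors $\mathcal{B}_i$, $\mathcal{D}$, and $\mathcal{D}_i$ to the explicit combinatorics of the exceptional configuration of $Y_{max}$ from Section 3, together with the case distinction between $n$ odd and $n$ even. The odd case has a single $\tau$-fixed stratum producing $\mathcal{B}_3$, while the even case splits into two components $\mathcal{B}_1, \mathcal{B}_2$ because of the additional fixed point coming from $\sigma^{n/2}$; this is precisely why two extra rank-one tautological bundles appear only for even $n$. Correctly matching each stacky component to the correct representation character, and reconciling the $\mathcal{D}_i$-contribution with the cyclic tautological classes, requires a careful local-chart analysis against the fundamental-domain description of Section 3. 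Once this geometric dictionary is in place, the rest of the statement follows mechanically from the cyclic McKay correspondence and the functoriality of $\Phi$ under restriction and induction.
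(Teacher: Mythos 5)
Your proposal follows essentially the same route as the paper: pass to the presentation $\mathcal{Y}\cong[\mathbb{Z}_n\operatorname{-Hilb}(\mathbb{C}^2)/\mathbb{Z}_2]$, use restriction/induction between $\mathbb{Z}_n$ and $D_{2n}$ to sort the $\hat{\mathcal{R}}_\rho$ into rank one and rank two, realize the rank-two bundles as the $\mathbb{Z}_2$-equivariant sheaf $\mathcal{O}_{X_1}(\tilde D_i)\oplus\mathcal{O}_{X_1}(g\cdot\tilde D_i)$ with the swap (Artin--Verdier style), and pin down the extension by showing the relevant $\operatorname{Ext}^1$ is one-dimensional (the paper computes the $\mathbb{Z}_2$-invariant $\operatorname{Ext}^1$ by restricting to the fundamental cycle on $X_1$ rather than via Leray on $Y_{max}$, but this is a cosmetic difference). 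One caveat: your justification that the extension is non-split --- ``local freeness of $\hat{\mathcal{R}}_{\rho_j}$ at the $\tau$-fixed points rules out the split extension'' --- does not work as stated, since the split bundle $\mathcal{O}_{\mathcal{Y}}\oplus\mathcal{O}_{\mathcal{Y}}(\mathcal{D}_i+\mathcal{C})$ is also locally free; the correct reason, implicit in both your construction and the paper's, is that the bundle is exhibited directly as the descent of $\mathcal{O}(\tilde D_i)\oplus\mathcal{O}(g\cdot\tilde D_i)$ with the swap action, which is not equivariantly isomorphic to that split sum (compare the underlying non-equivariant bundles, or the $\mathbb{Z}_2$-invariant pushforwards as in Lemma \ref{split}).
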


In the final section, building from the works \cite{itonak1}, \cite{itonak2} and \cite{art1ishii}, we formulate an analogous description of the socles of the $G$-constellations over exceptional divisors on the stack. We mainly apply the derived equivalence of \cite{art1ishii} in the computation of the socles over the quotient stack. Once again, the deficiency of working over the coarse moduli space appears once again failing to separate the 1-dimensional representations. We culminate in the following descriptions:

\begin{theorem}[Theorem \ref{mainthm3}]
	For a given $D_{2n}$-constellation $\overline{F}_{st}$ on the exceptional divisors over the quotient stack $\mathcal{Y}$, where the exceptional divisors $\mathcal{E}_i$ satisfies $\pi^{-1}(E_i) = \mathcal{E}_i$,
	\[ \operatorname{top}(\overline{F}_{st}) = \rho_0 \oplus \rho_0' \]
	\[ \operatorname{socle}(\overline{F}_{st}) = \begin{cases} 
		\rho_i & [\overline{F}_{st}] \in \mathcal{E}_i, [\overline{F}_{st}] \notin \mathcal{E}_j, i \neq j \\
		\rho_i \oplus \rho_j & [\overline{F}_{st}] \in \mathcal{E}_i \cap \mathcal{E}_j \\
		\rho_{\frac{n}{2} - 1} \oplus \rho_{n/2} \oplus \rho'_{n/2} & [\overline{F}_{st}] \in \mathcal{E}_{\frac{n}{2} - 1} \cap \mathcal{E}_{n/2} \mbox{ ($n$ even)}\\
		\rho_{n/2} \oplus \rho'_{n/2} & [\overline{F}_{st}] \in \mathcal{E}_{n/2} - (\mathcal{E}_{\frac{n}{2} - 1} \cup \{\mathcal{B}_1, \mathcal{B}_2\}) \mbox{ ($n$ even)}\\
		\rho'_{n/2} & [\overline{F}_{st}] = \mathcal{B}_1 \mbox{ ($n$ even)}\\
		\rho_{n/2} & [\overline{F}_{st}] = \mathcal{B}_2 \mbox{ ($n$ even)}\\
	\end{cases}
	\]
\end{theorem}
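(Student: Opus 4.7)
The plan is to pull the problem back to the cyclic case via the isomorphism $\mathcal{Y}\cong[\mathbb{Z}_n\operatorname{-Hilb}(\mathbb{C}^2)/\mathbb{Z}_2]$ provided by Theorem \ref{maxemb}, so that a $D_{2n}$-constellation on $\mathcal{Y}$ corresponds to a $\mathbb{Z}_2$-equivariant $\langle\sigma\rangle$-constellation on $\mathbb{Z}_n\operatorname{-Hilb}(\mathbb{C}^2)$, where $\mathbb{Z}_2$ acts through the coset $\tau\langle\sigma\rangle$. I would use the Fourier--Mukai equivalence $\Psi$ from Section 4, together with the description of the tautological bundles in Theorem \ref{mainthm2}, to transport the universal $D_{2n}$-constellation $\overline{F}_{st}$ to a sheaf on $\mathcal{Y}$ whose restriction to each exceptional stratum can be computed locally.

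For the top, the Ito--Nakamura computation (\cite{itonak1,itonak2}), applied to the underlying $\langle\sigma\rangle$-cluster, gives top equal to the trivial $\mathbb{Z}_n$-character. After $\mathbb{Z}_2$-averaging, this single character lifts to a sum of the two characters of $D_{2n}/\langle\sigma\rangle\cong\mathbb{Z}_2$, namely $\rho_0\oplus\rho_0'$, which proves the first formula uniformly.

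For the socle I would proceed stratum by stratum. By Ishii's computation in \cite{art1ishii}, a generic $\langle\sigma\rangle$-cluster over $\tilde{E}_i$ has socle equal to the character $\chi_i$ (and $\chi_i\oplus\chi_j$ at $\tilde{E}_i\cap\tilde{E}_j$). Using the elementary induction rules
\[
\operatorname{Ind}_{\langle\sigma\rangle}^{D_{2n}}\chi_i\;\cong\;\rho_i\quad(i\ne 0,\tfrac{n}{2}),\qquad \operatorname{Ind}_{\langle\sigma\rangle}^{D_{2n}}\chi_{n/2}\;\cong\;\rho_{n/2}\oplus\rho_{n/2}',
\]
the socle at a generic point of $\mathcal{E}_i$ becomes $\rho_i$, at a transverse intersection $\mathcal{E}_i\cap\mathcal{E}_j$ becomes $\rho_i\oplus\rho_j$, and the chain terminus $\mathcal{E}_{n/2-1}\cap\mathcal{E}_{n/2}$ (for even $n$) combines $\chi_{n/2-1}$ and the split $\chi_{n/2}$ into $\rho_{n/2-1}\oplus\rho_{n/2}\oplus\rho_{n/2}'$. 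These are immediate consequences of Frobenius reciprocity applied to the $\mathbb{Z}_2$-equivariant structure of the pushed-forward socle module.

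The main obstacle is the stacky locus $\mathcal{B}_1,\mathcal{B}_2$ (present only for even $n$), which lies over the ramification locus of $p:\mathbb{Z}_n\operatorname{-Hilb}(\mathbb{C}^2)\to Y_{max}$. Here the $\mathbb{Z}_2$-action has non-trivial isotropy, so the induced representation $\operatorname{Ind}\chi_{n/2}$ does not remain irreducible but splits into the two isotypic pieces, and we must decide which of $\rho_{n/2},\rho_{n/2}'$ appears on which divisor. To resolve this I would perform a direct local calculation: choose $\tau$-invariant local coordinates around a generic point of $\mathcal{B}_k$, write the $D_{2n}$-constellation as an explicit quotient of $\mathbb{C}[x,y]$, and read off the eigenvalue $\pm 1$ of $\tau$ on the one-dimensional socle space. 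The sign matches $\rho_{n/2}'$ on $\mathcal{B}_1$ and $\rho_{n/2}$ on $\mathcal{B}_2$ by the convention fixed in Theorem \ref{mainthm2} (and consistently with the Chern classes $\mathcal{B}_1,\mathcal{B}_2$ assigned to $\hat{\mathcal{R}}_{\rho_{n/2}},\hat{\mathcal{R}}_{\rho_{n/2}'}$ there). The uniqueness statements about extension classes in Theorem \ref{mainthm2} guarantee that no additional choice enters, completing the case analysis.
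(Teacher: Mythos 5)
Your proposal is correct in substance but follows a genuinely different route from the paper's. The paper proves this theorem homologically: it takes the Fourier--Mukai images $\Phi(O_0\otimes\rho_i^*)$ computed on the quotient stack in Theorem \ref{stackfm} (e.g.\ $O_{\mathcal{E}_{n/2}}(-\mathcal{B}_1)[1]$ for $\rho_{n/2}$ and $O_{\mathcal{E}_{n/2}}(-\mathcal{B}_2)[1]$ for $\rho'_{n/2}$), runs them through the adjunction $\operatorname{Hom}^k_{D(\mathcal{Y})}(\Phi(O_0\otimes\rho_i^*),O_{\mathbf{y}})\cong\operatorname{Hom}^k_{D^G(\mathbb{C}^2)}(O_0\otimes\rho_i^*,\Psi(O_{\mathbf{y}}))$ together with Serre duality, and reads off the socle by setting $k=2$; the $\mathcal{B}_1$/$\mathcal{B}_2$ dichotomy is then automatic from Theorem \ref{stackfm}, and the case distinction between stacky and non-stacky $\mathbf{y}$ enters through $\phi(O_{\mathbf{y}})=O_y$ versus $O_y\oplus O_{g\cdot y}$. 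You instead work directly with the module structure: Frobenius reciprocity ($\operatorname{Ind}\epsilon_i\cong\rho_i$, $\operatorname{Ind}\epsilon_{n/2}\cong\rho_{n/2}\oplus\rho'_{n/2}$) handles every non-stacky stratum, and an explicit eigenvalue computation of $\tau$ on the one-dimensional socle settles $\mathcal{B}_1$ versus $\mathcal{B}_2$. This is essentially the method the paper uses for Proposition \ref{socmax} and its closing Example ($n=4$: the socle of $O_{\mathbb{C}^2}/\langle x^3,y^3,xy,x^2+y^2\rangle$ is spanned by $x^2=-y^2$, on which $\tau$ acts by $-1$, giving $\rho'_{n/2}$ on $\mathcal{B}_1$), promoted to the stack; it is more elementary and self-contained, whereas the paper's argument buys the derived McKay correspondence (the images of all twisted skyscrapers) as part of the same computation. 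Two small cautions: your appeal to "the convention fixed in Theorem \ref{mainthm2}" to fix the sign should not substitute for the local computation, since that convention is itself derived from the same invariance argument; and at a stacky point you are implicitly taking $\overline{F}_{st}$ to be the length-$n$ $\mathbb{Z}_2$-invariant $\mathbb{Z}_n$-cluster (the "half" constellation of the paper's remark), for which the top is one-dimensional rather than $\rho_0\oplus\rho'_0$ --- a tension present in the paper's own statement, but worth making explicit if you write this up.
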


\begin{remark}
	It is imperative to comment about the subject of dihedral groups. The binary dihedral $\operatorname{SL}(2)$ case has a well-established McKay correspondence, especially that a minimal resolution is given by the $BD_{2n}$-Hilbert scheme in Section 13 of \cite{itonak2}. The Riemenschneider \cite{reimenschneider} notation $D_{n,q}$, which also appeared in \cite{wunram}, is mainly defined for small dihedral groups. These are independent to the results for the dihedral group as a reflection group. 
\end{remark}

\section{Preliminaries}
\subsection{$G$-constellations on $\mathbb{C}^n$}
\subsubsection{Definitions}
Let $V = \mathbb{C}^n$ be an affine space and $G \subset \operatorname{GL}(V)$ be a finite subgroup.
\begin{definition}
	A $G$-constellation on $V$ is a $G$-equivariant coherent sheaf $E$ on $V$ such that $H^0(E)$ is isomorphic to the regular representation of $G$ as a $\mathbb{C}[G]$-module. In symbols, $H^0(E) \cong \mathbb{C}[G]$.
\end{definition}

\begin{example}
	When $E = O_Z$, the structure sheaf $O_Z$ of a $G$-cluster $Z$ inside $V$, $E$ is a $G$-constellation.
\end{example}

Let $R(G) = \oplus_{\rho \in \operatorname{Irr}(G)} \mathbb{Z}\rho$ be the representation ring of $G$, where $\operatorname{Irr}(G)$ denotes the set of irreducible representations of $G$. The parameter space of stability conditions of $G$-constellations is the $\mathbb{Q}$-vector space:
$$\Theta := \{\theta \in \operatorname{Hom}_{\mathbb{Z}}(R(G), \mathbb{Q}) | \theta(\mathbb{C}[G]) = 0 \},$$
where $\mathbb{C}[G]$ is regarded as the regular representation of $G$.

\begin{definition}
	Given $\theta \in \Theta$, a $G$-constellation $E$ is \textbf{$\theta$-stable} (resp. \textbf{$\theta$-semistable}) if every proper $G$-equivariant coherent subsheaf $0 \subsetneq F \subsetneq E$ satisfies $\theta(H^0(F)) > 0$ (resp. $\theta(H^0(F)) \geq 0$). We regard $H^0(F)$ here as an element of $R(G)$.
\end{definition}

\begin{definition}
	A parameter $\theta \in \Theta$ is \textbf{generic} if a $G$-constellation which is $\theta$-semistable is also $\theta$-stable.
\end{definition}

By Proposition 5.3 of \cite{king}, there is a fine moduli scheme $\mathcal{M}_{\theta} = \mathcal{M}_{\theta}(V)$ of $\theta$-stable $G$-constellations on $V$ for generic $\theta$.

There is a morphism $\tau: \mathcal{M}_{\theta}(V) \rightarrow V/G$ which sends a $G$-constellation to its support. By Proposition 2.2 of \cite{crawishii}, $\tau$ is a projective morphism when $\theta$ is generic.

\begin{definition}
	The subset $\Theta^{gen} \subset \Theta$ of generic parameters is open and dense. It is the disjoint union of finitely many convex polyhedral cones $C$ in $\Theta$ (see Lemma 3.1 of \cite{crawishii}). The convex polyhedral cone $C$ is called a \textit{chamber} in $\Theta$.
\end{definition}

For $\theta \in \Theta^{gen}$, the moduli space $\mathcal{M}_{\theta}$ only depends on the open Geometric Invariant Theory (GIT) chamber $C \subset \Theta$ containing $\theta \in \Theta$, so that we can write $\mathcal{M}_C$ instead of $\mathcal{M}_{\theta}$ for any $\theta \in C$. The following theorem gives an example:

\begin{theorem}[Theorem 1.1 of \cite{crawishii}, Theorem 1.1 of \cite{yamagishinonabelian}]
	For a finite subgroup $G \subset \operatorname{SL}(3, \mathbb{C})$, suppose that $Y \rightarrow \mathbb{C}^3/G$ is a projective crepant resolution. Then $Y \cong \mathcal{M}_C$ for some GIT chamber $C \subset \Theta$.
\end{theorem}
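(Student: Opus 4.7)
The plan is to combine three ingredients: (i) a base case realizing $G\operatorname{-Hilb}(\mathbb{C}^3)$ itself as $\mathcal{M}_C$ for a distinguished chamber, (ii) Kollár's theorem that any two projective crepant resolutions of a Gorenstein $3$-fold quotient singularity are connected by a finite chain of flops, and (iii) a variation-of-GIT analysis showing that each wall-crossing in $\Theta$ either leaves the moduli unchanged or induces a flop of crepant resolutions. Concretely, first establish $\mathcal{M}_{C_0} \cong G\operatorname{-Hilb}(\mathbb{C}^3)$ for the Nakamura chamber $C_0 = \{\theta \in \Theta : \theta(\rho) > 0 \text{ for every nontrivial irreducible } \rho\}$; this combines King's GIT description of $G\operatorname{-Hilb}$ with the Bridgeland-King-Reid theorem that guarantees $G\operatorname{-Hilb}(\mathbb{C}^3)$ is a projective crepant resolution with a Fourier-Mukai derived equivalence $\Phi: D(G\operatorname{-Hilb}(\mathbb{C}^3)) \xrightarrow{\sim} D^G(\mathbb{C}^3)$.

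Next I would perform the wall-and-chamber analysis of $\Theta^{gen}$. For a wall $W$ between adjacent chambers $C, C'$, compare $\mathcal{M}_C$ and $\mathcal{M}_{C'}$ via their common GIT degeneration $\mathcal{M}_W$: the associated birational map $\mathcal{M}_C \dashrightarrow \mathcal{M}_{C'}$ must be either an isomorphism, a small modification (flop), or a divisorial contraction. Because both $\mathcal{M}_C$ and $\mathcal{M}_{C'}$ are crepant over $\mathbb{C}^3/G$ — a fact one propagates along the wall using the persistence of the derived equivalence with $D^G(\mathbb{C}^3)$ — the divisorial case is excluded (it would produce a nonzero discrepancy), so every wall-crossing stays inside the class of projective crepant resolutions and is a flop. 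Then, given an arbitrary target resolution $Y \to \mathbb{C}^3/G$, Kollár's theorem produces a finite sequence of flops $G\operatorname{-Hilb}(\mathbb{C}^3) = Y_0 \dashrightarrow Y_1 \dashrightarrow \cdots \dashrightarrow Y_N = Y$, and inductively crossing matching walls starting from $C_0$ yields a chamber $C$ with $\mathcal{M}_C \cong Y$.

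The main obstacle is this inductive matching step: given $Y_i \cong \mathcal{M}_{C_i}$ and a specific flop $Y_i \dashrightarrow Y_{i+1}$ along a flopping curve $\ell \subset Y_i$, produce a wall $W$ of $C_i$ whose crossing induces \emph{exactly} this flop and not some other one. Under the derived McKay equivalence $\Phi$, the curve $\ell$ is dual to a specific irreducible representation $\rho_\ell$, so the natural candidate wall is $\{\theta(\rho_\ell) = 0\}$, but one must verify that crossing it really performs the flop of $\ell$ rather than degenerating the moduli non-trivially. In the abelian setting Craw-Ishii solve this by a direct toric computation on $T$-fixed points and cones in $\Theta$; in the non-abelian setting, Yamagishi's approach translates each flopping contraction through the Bridgeland-King-Reid equivalence into a stability-theoretic datum in $D^G(\mathbb{C}^3)$, exhibits the required wall intrinsically from the normal bundle of $\ell$ and the associated representation, and checks via a local analysis near the wall that the VGIT quotient flops exactly the expected curve. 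This geometric-to-GIT dictionary, rather than the abstract existence of flops, is the real technical heart of the proof.
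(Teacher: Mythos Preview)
The paper does not prove this statement: it is quoted in the Preliminaries section as a known result, with attribution to Craw--Ishii and Yamagishi, and no argument is given. There is therefore no proof in the paper to compare your proposal against.

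That said, your outline is a fair summary of the strategy actually used in those references: realize $G\operatorname{-Hilb}(\mathbb{C}^3)$ as $\mathcal{M}_{C_0}$ for the positive chamber, show that each wall-crossing is either an isomorphism or a flop of crepant resolutions (using the BKR equivalence to rule out divisorial contractions), and then match an arbitrary chain of flops from $G\operatorname{-Hilb}$ to $Y$ with a sequence of wall-crossings. You have also correctly identified the genuine difficulty, namely showing that a \emph{prescribed} flop can always be realized by crossing \emph{some} wall of the current chamber; this is precisely what Craw--Ishii handle torically in the abelian case and what Yamagishi resolves in general. One small caveat: the connectedness of projective crepant resolutions by flops in this setting is not literally Koll\'ar's theorem but follows from the MMP for terminal $3$-folds (Kawamata, Koll\'ar--Mori); and in Craw--Ishii the argument is organized slightly differently, deducing that every chamber gives a crepant resolution first and then identifying which resolutions arise, rather than propagating crepancy wall-by-wall.
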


The following theorem describes the structure of $G$-constellations for $n = 2$. The arguments on Theorems 1.1 and 1.2 in \cite{bridgelandkingreid} can be adapted to guarantee not only a resolution of singularities of $\mathbb{C}^2/G$, but also the embedding of their corresponding derived categories, which tell the relationship between canonical divisors via inequalities following the DK-hypothesis in \cite{kawamata}.

\begin{proposition}[Theorem 3 of \cite{art2ishii}]
	Let $G$ be a finite subgroup of $\operatorname{GL}(2, \mathbb{C})$. If $\theta$ is generic, then the moduli space $\mathcal{M}_{\theta}$ is a resolution of singularities of $\mathbb{C}^2/G$. Moreover, the universal family of $G$-constellations defines a fully faithful functor $$\Phi_{\theta}: D^b(coh(\mathcal{M}_{\theta})) \rightarrow D^b(coh^G(\mathbb{C}^2)).$$
\end{proposition}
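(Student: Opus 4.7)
The plan is to adapt the Bridgeland--King--Reid framework from the $\operatorname{SL}(n,\mathbb{C})$ setting to an arbitrary finite subgroup $G \subset \operatorname{GL}(2,\mathbb{C})$. The two assertions—that $\mathcal{M}_\theta$ resolves $\mathbb{C}^2/G$ and that the universal family induces a fully faithful Fourier--Mukai transform—share the same technical input: the genericity of $\theta$ together with a sharp dimension estimate for the fiber product $\mathcal{M}_\theta \times_{\mathbb{C}^2/G} \mathcal{M}_\theta$.

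First I would verify that $\mathcal{M}_\theta$ is a smooth surface and that the Hilbert--Chow-type morphism $\tau: \mathcal{M}_\theta \to \mathbb{C}^2/G$ is a resolution. Since $\theta$ is generic, every $\theta$-semistable $G$-constellation is $\theta$-stable, so King's GIT construction (Proposition 5.3 of \cite{king}) presents $\mathcal{M}_\theta$ as a fine moduli scheme with $\operatorname{Hom}_G(E,E) = \mathbb{C}$ for every $\theta$-stable $E$; combining this with the standard deformation analysis of $G$-equivariant coherent sheaves on a smooth surface (exploiting the traceless part of the obstruction) yields smoothness of dimension two. Projectivity of $\tau$ is Proposition 2.2 of \cite{crawishii}, and birationality follows by noting that for any free $G$-orbit $Gx \subset \mathbb{C}^2$ the structure sheaf $O_{Gx}$ is $\theta$-stable for every $\theta$, so $\tau$ is an isomorphism over the open locus of free orbits.

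Next I would assemble the Fourier--Mukai functor from the universal family $\mathcal{U}$ on $\mathcal{M}_\theta \times \mathbb{C}^2$, which is $G$-equivariant along the second factor; writing $p_1, p_2$ for the projections I would set
$$\Phi_\theta(E) := Rp_{2*}\bigl(p_1^* E \otimes \mathcal{U}\bigr),$$
landing in $D^b(\operatorname{coh}^G(\mathbb{C}^2))$ because $\tau$ is projective and $\mathcal{M}_\theta$ is smooth. To prove fully faithfulness I would invoke the Bondal--Orlov intersection criterion in the form employed in \cite{bridgelandkingreid}: it suffices to show that $\{\Phi_\theta(O_y)\}_{y \in \mathcal{M}_\theta}$ is an orthogonal spanning class, that is
$$\operatorname{Hom}^i_G\bigl(\Phi_\theta(O_{y_1}),\Phi_\theta(O_{y_2})\bigr) = 0 \quad \text{unless } y_1 = y_2 \text{ and } 0 \leq i \leq 2,$$
together with $\operatorname{Hom}_G(\Phi_\theta(O_y),\Phi_\theta(O_y)) = \mathbb{C}$. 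The crucial geometric input is the dimension bound
$$\dim\bigl(\mathcal{M}_\theta \times_{\mathbb{C}^2/G} \mathcal{M}_\theta\bigr) \leq \dim \mathcal{M}_\theta + 1 = 3,$$
which holds automatically since $\tau$ is a projective birational morphism of surfaces whose exceptional fibers have dimension at most one; with this bound the base-change and local-to-global spectral sequence computations of \cite{bridgelandkingreid} run essentially verbatim.

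The main obstacle I expect is disentangling the portion of the BKR argument that relies on crepancy—unavailable here when $G$ contains pseudo-reflections or the resolution is non-minimal—from the portion that only uses the dimension estimate. In the $\operatorname{SL}$ setting, $\omega_{\mathcal{M}_\theta} \cong \tau^*\omega_{\mathbb{C}^2/G}$ is trivial as a $G$-sheaf and Serre duality upgrades the embedding to an equivalence; in our case $\omega_{\mathcal{M}_\theta}$ acquires a nontrivial boundary contribution, and the DK-hypothesis of \cite{kawamata} predicts exactly that one is left with a fully faithful functor rather than an equivalence. The delicate point is to phrase the off-diagonal $\operatorname{Ext}$-vanishing so that the canonical-sheaf twist enters only through the right adjoint of $\Phi_\theta$ and does not obstruct the orthogonality of the spanning class, so that fully faithfulness survives the failure of crepancy even when equivalence does not.
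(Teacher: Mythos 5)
This proposition is not proved in the paper: it is imported verbatim as Theorem 3 of \cite{art2ishii}, and the only indication of its proof given here is the one-line remark that ``the arguments on Theorems 1.1 and 1.2 in \cite{bridgelandkingreid} can be adapted.'' Your proposal is consistent with that slogan, and several of its ingredients are sound: genericity gives a fine moduli space via \cite{king}, projectivity of $\tau$ is Proposition 2.2 of \cite{crawishii}, free orbits carry $\theta$-stable structure sheaves for every $\theta$ so $\tau$ is birational over the free locus, and the dimension bound $\dim(\mathcal{M}_{\theta} \times_{\mathbb{C}^2/G} \mathcal{M}_{\theta}) \leq 3$ is automatic for surfaces.

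However, the two points you gloss over are exactly the two points where the $\operatorname{GL}(2,\mathbb{C})$ case genuinely differs from \cite{bridgelandkingreid}, so as written the proposal has real gaps. First, smoothness: the ``traceless part of the obstruction'' argument relies on $\omega_{\mathbb{C}^2}$ being trivial as a $G$-sheaf. When $G \not\subset \operatorname{SL}(2,\mathbb{C})$, equivariant Serre duality gives $\operatorname{Ext}^2_G(E,E) \cong \operatorname{Hom}_G(E, E \otimes \det(\rho_{nat})^{\lor})^*$, and this need not vanish for a $\theta$-stable $E$: the twist $E \otimes \det(\rho_{nat})^{\lor}$ is again a $G$-constellation (the regular representation is preserved by twisting by a character) but is stable only for a \emph{different} parameter, so stability does not force the Hom to vanish. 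Smoothness of $\mathcal{M}_{\theta}$ is therefore a genuine theorem rather than a formal consequence of simplicity; the standard route, reflected in Section 3 of this paper, is to embed $G$ into $\operatorname{SL}(3,\mathbb{C})$ and deduce it from the three-dimensional moduli space. Second, the orthogonality of the spanning class: for $y_1 \neq y_2$ in the same fiber of $\tau$ you need $\operatorname{Hom}^i_G(E_{y_1},E_{y_2}) = 0$ for $i = 1,2$ as well as $i = 0$; the case $i=0$ follows from stability, but $\operatorname{Hom}^2_G(E_{y_1},E_{y_2}) \cong \operatorname{Hom}_G(E_{y_2}, E_{y_1} \otimes \det(\rho_{nat})^{\lor})^*$ again involves the canonical twist and is not killed by stability, and in \cite{bridgelandkingreid} both this vanishing and the intersection-theorem step are precisely where $\omega$-triviality enters. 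You correctly flag this as ``the delicate point,'' but deferring it amounts to deferring the main content of the theorem; the cited proof resolves it by the surface-specific arguments of \cite{art1ishii} and \cite{art2ishii} rather than by rerunning the BKR machinery verbatim.
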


\subsection{The Maximal Resolution}
\label{groupprojection}
Let $G$ be a finite subgroup of $\operatorname{GL}(2, \mathbb{C})$, not necessarily small (i.e. the action may not be free on $\mathbb{C}^2 - \{0\}$). Then the quotient variety $X = \mathbb{C}^2/G$ and its projection $\mathbb{C}^2 \xrightarrow{\pi} X$ is equipped with a boundary divisor $B$ determined by the equality $K_{\mathbb{C}^2} = \pi^*(K_X + B)$ expressed as $B = \Sigma \frac{m_j - 1}{m_j} B_j$, where $B_j \subset X$ is the image of a one-dimensional linear subspace whose pointwise stabilizer subgroup $G_j \subset G$ is cyclic of order $m_j$. Furthermore, $G$ is small if and only if $B = 0$.  

\begin{example}
	Consider the abelian group generated by the matrices $\begin{bmatrix}
		1 & 0 \\ 0 & \epsilon_3
	\end{bmatrix}$ and $\begin{bmatrix} -1 & 0 \\ 0 & 1 \end{bmatrix}$, i.e. this is the abelian group $G \cong \mathbb{Z}_3 \times \mathbb{Z}_2$.
	
	There is a relation between canonical divisors: $K_{\mathbb{C}^2} = \pi^*(K_{\mathbb{C}^2/G} + \frac{1}{2}div(x^2) + \frac{2}{3}div(y^3)).$
\end{example}

\begin{theorem}[Proposition 5.20 of \cite{kollarmori}]
	The log pair $(X,B)$ is a log terminal singularity.
\end{theorem}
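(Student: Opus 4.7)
The plan is to verify the Kawamata log terminal condition for $(X,B)$ directly, via the covering trick afforded by the finite Galois cover $\pi: \mathbb{C}^2 \to X$. The coefficient portion of the klt condition is immediate: each boundary coefficient is $\frac{m_j-1}{m_j} = 1 - \frac{1}{m_j} < 1$, since $m_j \geq 1$. The substantive part is the discrepancy inequality $a_E(X,B) > -1$ for every exceptional prime divisor $E$ of an arbitrary log resolution $f: Y \to X$ of $(X,B)$.

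I would first fix such a log resolution $f$, and then form the normalized fiber product $\widetilde{V}$ of $Y \times_X \mathbb{C}^2$, equipped with its projections $\pi': \widetilde{V} \to Y$ (finite, generically Galois with group $G$) and $\widetilde{f}: \widetilde{V} \to \mathbb{C}^2$ (birational). Since $\widetilde{V}$ has only quotient singularities inherited from the stabilizer subgroups of $G$, take a $G$-equivariant resolution $\mu: W \to \widetilde{V}$ and set $h := \widetilde{f} \circ \mu: W \to \mathbb{C}^2$. This produces a birational morphism from a smooth surface to the smooth surface $\mathbb{C}^2$, against which discrepancies on $Y$ can be compared.

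The key calculation combines the ramification formula for the finite map $\pi'$ with the discrepancy identity $K_Y + \widetilde{B} = f^{*}(K_X + B) + \sum a_E E$ and the Hurwitz relation $\pi^{*}(K_X + B) = K_{\mathbb{C}^2}$. For any prime divisor $\widetilde{E}$ on $W$ that is not $\mu$-exceptional and lies over an $f$-exceptional divisor $E$, with ramification index $r = r_{\widetilde{E}}$ of $\pi'$ along $\widetilde{E}$, one obtains
\begin{equation*}
\operatorname{ord}_{\widetilde{E}}\bigl(K_W - h^{*}K_{\mathbb{C}^2}\bigr) \;=\; r \cdot a_E(X,B) \;+\; (r-1).
\end{equation*}
Because $\mathbb{C}^2$ is smooth, every exceptional discrepancy of the birational morphism $h$ is non-negative, so the left-hand side is $\geq 0$. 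Rearranging yields $a_E(X,B) \geq -\tfrac{r-1}{r} = \tfrac{1}{r} - 1 > -1$, which is precisely the klt bound.

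The main obstacle is the bookkeeping behind the displayed identity: because $\widetilde{V}$ is non-smooth and $\mu$ may introduce further exceptional divisors, one must sort the prime divisors of $W$ into those dominating $f$-exceptional divisors, those dominating components of $B$, and those purely $\mu$-exceptional, and then verify the ramification--discrepancy relation component by component. This reduces to a local computation in $G$-invariant monomial coordinates around each stabilizer locus on $\mathbb{C}^2$ and its blowups, and it is exactly the content of Proposition 5.20 of \cite{kollarmori} being invoked here.
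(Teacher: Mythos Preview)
The paper gives no proof of this statement at all; it is quoted verbatim as Proposition~5.20 of \cite{kollarmori} and used as a black box. Your sketch is a faithful outline of the standard covering-trick argument that appears in that reference: pull back along the Galois quotient $\pi:\mathbb{C}^2\to X$, compare discrepancies via Hurwitz, and use that $\mathbb{C}^2$ is smooth (hence has nonnegative discrepancies) to obtain $a_E(X,B)\geq -1+\tfrac{1}{r}>-1$. The identity you display and the case analysis you flag (divisors over $f$-exceptional loci, over components of $B$, and $\mu$-exceptional divisors) are exactly the ingredients of the Koll\'ar--Mori proof, so there is nothing to compare against on the paper's side beyond the citation itself.

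One minor comment: your closing sentence is slightly circular, since you end by invoking the very proposition you set out to prove. If you want the sketch to stand on its own, you should actually carry out the ramification--discrepancy bookkeeping rather than defer it back to \cite{kollarmori}; but as a summary of what that proposition says and why it is true, your write-up is accurate.
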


From this theorem, given a resolution of singularities $\tau: Y \rightarrow X$ and write $K_Y + \tau_*^{-1}(B) = \tau^*(K_X + B) + \Sigma_i a_i E_i$, where $E_i$ are the exceptional divisors and $a_i \in \mathbb{Q}$, then $a_i > -1$, for all $i$. Then, among all the resolutions $Y$ which satisfy $a_i \leq 0$ for all $i$, we define the maximal resolution of $(X,B)$:

\begin{definition}\label{maxdef}
	Let $(X,B)$ be a log terminal pair of a surface $X$ and a $\mathbb{Q}-$divisor $B$. We can assume the surfaces $Y$ and $Z$ are smooth. A resolution of singularities $f: Y \rightarrow X$ is a \textbf{maximal resolution} of $(X,B)$ if $K_Y + f_*^{-1}(B) = f^*(K_X + B) + \Sigma_i a_i E_i$, where $-1 < a_i \leq 0$, and for any proper birational morphism $g: Z \rightarrow Y$ that is not an isomorphism, we have $K_Z + h_*^{-1}(B) = h^*(K_X + B) + \Sigma_j b_j F_j$, $h = fg$ and for some $b_j > 0$.
\end{definition}

\begin{example} In the following, we all consider $X = \mathbb{C}^2/G$.
	\begin{enumerate}
		\item Consider the cyclic group $G = \frac{1}{5}(1,2)$. The minimal resolution is given by the dual graph in Figure \ref{minmax}.
		\begin{figure}
			\begin{tikzcd}[row sep=1em,column sep=1em]
				\circ_{(-\frac{2}{5},-3)} \arrow[r, dash] & \circ_{(-\frac{1}{5},-2)}
			\end{tikzcd}
			\caption{Dual graph of the exceptional divisors of the minimal resolution of $X$, where $(a_i,b_i)$ is the ordered pair whose $a_i$ is the coefficient of $E_i$ and $b_i$ is the self-intersection number.}\label{minmax}
		\end{figure}
		The dual graph after the blow-up over the intersection point of two exceptional divisors is given in Figure \ref{blwmin}.
		\begin{figure}
			\begin{tikzcd}[row sep=1em,column sep=1em]
				\circ_{(-\frac{2}{5},-4)} \arrow[r, dash] & \circ_{(\frac{2}{5},-1)} \arrow[r, dash] & \circ_{(-\frac{1}{5},-3)}
			\end{tikzcd}
			\caption{Dual graph of the exceptional divisors of the blow-up of minimal resolution of $X$ over the intersection point, where $(a_i,b_i)$ is again from Figure \ref{minmax}.}\label{blwmin}
		\end{figure}
		In this case, the maximal resolution of $X$ coincides with the minimal resolution.
		\item A slightly more complicated example is the Example 3.15 in \cite{kollarshep}, where $G = \frac{1}{19}(1,7)$. In this case, the maximal resolution of $X$ is not isomorphic to the minimal resolution.
        \item The first two examples are small groups. Now, we consider the smallest reflection group $G = \left\langle \begin{pmatrix}
	0 & 1\\
	1 & 0
\end{pmatrix} \right\rangle \cong \mathbb{Z}_2$. The quotient variety $X$ is smooth, so the minimal resolution is the identity morphism. The maximal resolution coincides with the minimal resolution as follows:
\begin{align*}
\mathbb{C}^2/G &= \operatorname{Spec}(\mathbb{C}[x,y]^G) = \operatorname{Spec}(\mathbb{C}[a:=x+y, b:=xy])\\
B &:= (x - y)^2 = a^2 - 4b, \mbox{ for any point $(a,b)$ on $B$,}\\
f &:= Blp_{(x+y,xy) = (a, a^2/4)} : Y \rightarrow X\\
K_Y + f_*^{-1}(B) &= f^*(K_X + B) + cE\\
c &= - K_Y \cdot E - f_*^{-1}(B) \cdot E\\
&= 1 - \frac{1}{2} = \frac{1}{2}
\end{align*}
    Thus, the maximal resolution of $(\mathbb{C}^2/G, B)$ is the identity map.
	\end{enumerate}
\end{example}

\begin{theorem}[expanded from \cite{kollarshep}, Lemma 3.13; generalized in higher-dimension cases in Theorem 17 of \cite{kawamata}; and Corollary 1.4.3 of \cite{bchm}]
	A quotient singularity $(X,B)$ of a surface has a unique maximal resolution (which we denote by $Y_{max}$). 
\end{theorem}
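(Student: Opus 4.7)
The plan is to reduce the theorem to a finiteness statement about divisorial valuations of non-positive discrepancy, and then realize $Y_{max}$ as the unique minimal smooth model that exhibits all such valuations as exceptional divisors. Since the preceding theorem guarantees that $(X,B)$ is log terminal, every divisorial valuation $v$ over $X$ satisfies $a_v(K_X + B) > -1$. The key finiteness claim I would establish first is that the set
$$\mathcal{V} = \{v \text{ divisorial over } X : -1 < a_v(K_X + B) \leq 0\}$$
is finite. For surface log terminal pairs this is classical and can be obtained either from the explicit Hirzebruch-Jung classification of two-dimensional log terminal singularities or from the finiteness of log terminal models in Corollary 1.4.3 of \cite{bchm}.

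For existence, I would start with a resolution $Y_0 \to X$ on which all exceptional discrepancies are already $\leq 0$; in the surface case the minimal resolution of $X$ suffices (and when $X$ itself is smooth, as in the dihedral reflection setting, one simply starts with $Y_0 = X$). Iteratively, at each step I would look for a point $p$ on the current surface $Y_k$ whose blow-up produces an exceptional divisor with discrepancy
$$b \;=\; 1 - m_B + \sum_i a_i m_i \;\leq\; 0,$$
where $m_B$ is the multiplicity at $p$ of the strict transform of $B$ and $m_i$ the multiplicity of each existing exceptional $E_i$. If such a $p$ exists, I perform the blow-up; otherwise I stop. Finiteness of $\mathcal{V}$ guarantees that the iteration terminates, yielding a smooth surface $Y_{max}$ on which every $v \in \mathcal{V}$ is exhibited as an exceptional divisor and on which no further blow-up keeps all discrepancies non-positive. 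This is precisely the content of Definition \ref{maxdef}.

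For uniqueness, given two candidates $Y$ and $Y'$, I would take a smooth common refinement $Z$ dominating both. Condition (1) of Definition \ref{maxdef} says that the exceptional divisors of $Y$ and of $Y'$ each lie in $\mathcal{V}$; condition (2), combined with the blow-up formula above, forces that both $Y$ and $Y'$ in fact realize every element of $\mathcal{V}$ (otherwise a sequence of blow-ups of $Y$ could reach the missing valuation without introducing positive discrepancies, contradicting maximality). Hence $Y$ and $Y'$ share the same set of exceptional valuations over $X$, and since each is minimal smooth with that property, $Y \cong Y'$ by the classical theory of birational maps between smooth surfaces (contraction of $(-1)$-curves).

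The main obstacle I anticipate is controlling the order in which valuations of $\mathcal{V}$ are realized during the iterative blow-up procedure, and correspondingly verifying in the uniqueness step that any $v \in \mathcal{V}$ missing from $Y$ can indeed be reached by a chain of further blow-ups of $Y$ all having discrepancy $\leq 0$ along the way. Handling this requires either a careful local computation via Hirzebruch-Jung continued fractions, or a relative log MMP argument in the style of \cite{bchm}; the cited works of Kollár-Shepherd-Barron, Kawamata, and BCHM between them supply both strategies.
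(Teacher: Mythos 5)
The paper itself offers no proof of this statement: it is imported wholesale from Koll\'ar--Shepherd-Barron (Lemma 3.13), Kawamata (Theorem 17) and BCHM (Corollary 1.4.3), so there is no internal argument to compare yours against. Taken on its own terms, your outline is the standard surface-theoretic proof and is essentially correct: finiteness of the set $\mathcal{V}$ of valuations with discrepancy in $(-1,0]$ is Corollary 2.36 of Koll\'ar--Mori for klt pairs; your blow-up discrepancy formula $b = 1 - m_B + \sum_i a_i m_i$ is the multiplicity of the crepant boundary $\Delta_k = f_{k*}^{-1}B - \sum a_i E_i$ (which stays effective throughout, since all $a_i \leq 0$); and the uniqueness step correctly reduces to the fact that two smooth surfaces proper and birational over $X$ realizing the same set of divisorial valuations are isomorphic (resolve the indeterminacy of $Y \dashrightarrow Y'$ minimally; the last blow-up would otherwise produce a divisor on $Y'$ not on $Y$).

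The one step you defer --- that a $v \in \mathcal{V}$ not realized on $Y$ can be reached by a chain of blow-ups all of whose intermediate discrepancies are $\leq 0$ --- is a genuine gap as written, but it closes with a short monotonicity computation rather than Hirzebruch--Jung machinery or a full MMP. Precisely: if $\Delta$ is an effective boundary on a smooth surface and the blow-up at $p$ has discrepancy $a_{w_1} = 1 - \operatorname{mult}_p\Delta > 0$, then every valuation infinitely near to $p$ has positive discrepancy, by induction along the tower using $\operatorname{mult}_{p_k}\widetilde{\Delta} \leq \operatorname{mult}_p\Delta$ and the fact that the exceptional contributions $+\,a_{w_j}$ are positive by the inductive hypothesis, so that $a_{w_{k+1}} \geq 1 - \operatorname{mult}_p\Delta > 0$. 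The contrapositive, applied successively on each intermediate model (whose crepant boundary remains effective as long as the discrepancies encountered are $\leq 0$), shows that every valuation preceding a $v$ with $a_v \leq 0$ also has non-positive discrepancy. This simultaneously justifies your uniqueness step and shows that your stopping criterion --- checking only single blow-ups --- already certifies the full maximality condition of Definition \ref{maxdef}, since any non-trivial $g: Z \to Y$ factors through a first blow-up whose exceptional divisor survives on $Z$ with its positive discrepancy. With that lemma inserted, your proof is complete and self-contained, which is more than the paper provides.
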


\section{Realizing Blow-ups as Moduli Spaces}
In this section, we prove Theorem 1.2 (or the conjecture in the case of dihedral groups) by embedding an affine open subset of each blow-up of $\mathbb{C}^2/D_{2n}$ to a crepant resolution of $\mathbb{C}^3/D_{2n}$.

Throughout the rest of this paper (unless explicitly mentioned), in $G := D_{2n}$ represented by $\left\langle \sigma = \begin{pmatrix}
	e^{2\pi i/n} & 0\\
	0 & e^{-2\pi i/n}
\end{pmatrix}, \tau = \begin{pmatrix}
	0 & 1\\
	1 & 0
\end{pmatrix} \right\rangle$ and $H := \langle \sigma \rangle \cong \mathbb{Z}_n$, we have the following commutative diagram:

\begin{tikzcd}
	\mathbb{C}^2  \arrow[d, "\tau_1"] & &\\
	\mathbb{C}^2/H \arrow[d, "\tau_2"] & X_1 \arrow[d, "\tau_2'"] \arrow[l, "f_1"]& Y \arrow[dl, "f_3"]\\
	\mathbb{C}^2/G & Y_1 \arrow[l, "f_2"] & (Y_1)_{max} \arrow[l, "h_{max}"] 
\end{tikzcd}

where the corresponding varieties and the morphisms are:
\begin{align*}
	X_1 &:= H\operatorname{-Hilb}(\mathbb{C}^2) = \mathbb{Z}_n\operatorname{-Hilb}(\mathbb{C}^2)\\
	Y_1 &:= X_1/(G/H) = X_1/(\mathbb{Z}_2)\\
	f_1 &: \mbox{ the minimal (crepant) resolution of $\mathbb{C}^2/H$}\\
	\tau_1 &: \mbox{ projection morphisms to $H$-orbits}\\
	\tau_2, \tau_2' &: \mbox{ projection morphisms to their $G/H \cong \mathbb{Z}_2$-orbits}\\
	f_2 &: \mbox{ the induced morphism from taking $\mathbb{Z}_2$-orbits}\\
	(Y_1)_{max} &:= \mbox{ the maximal resolution $h_{max}$ of $(Y_1, B')$, where $B'$ is defined by:}\\
	K_{X_1} &= \tau_2'^*(K_{Y_1} + B')\\
	f_3 &: \mbox{ the minimal resolution of $Y_1$}\\
	Y &:= (G/H)\operatorname{-Hilb}(X_1) = \mathbb{Z}_2\operatorname{-Hilb}(X_1)
\end{align*}

By the commutative diagram above, because $f_1$ is a birational map, $f_2$ is also a birational map. We can see this because the projection $\tau_2$ induces an inclusion between the ring of rational functions $(k(\mathbb{C}^2)^H)^{G/H} \hookrightarrow k(\mathbb{C}^2)^H$.

We denote the following exceptional divisors and refer to Figures \ref{config1} and \ref{config2} for the configuration:
\begin{equation}\label{excd}
\begin{aligned}
	(1) \mbox{ On $X_1$, $\tilde{E_i}$ whose projective coordinates are defined by:}\\ (x^i : y^{n-i}), (1 \leq i \leq n-1).\\
	(2) \mbox{ On $Y_1$, $E_i := \tau_2'(\tilde{E}_i)$ so $\tau_2'(\tilde{E}_{n-i}) = \tau_2'(\tilde{E}_i)$}\\
	\mbox{for all $1 \leq i \leq m$, where $m = \frac{n-1}{2}$ for odd $n$ and $m = n/2$ for even $n$}.
\end{aligned}
\end{equation}

We define the ramification divisors on $\mathbb{C}^2/G$ (i.e. the support of the discriminant divisor $\hat{B}$ defined by the equation $K_{\mathbb{C}^2/H} = \tau_2^*(K_{\mathbb{C}^2/G} + \hat{B})$) with their corresponding explicit equations as:
\begin{equation}\label{eqb1}
	\begin{aligned}
		\hat{B}_1: \langle (x^{n/2} + y^{n/2})^2 = 0 \rangle \\[1pt]
		\hat{B}_2: \langle (x^{n/2} - y^{n/2})^2 = 0 \rangle \\[1pt]
		\hat{B}_3: \langle (x^n - y^n)^2 = 0 \rangle
	\end{aligned}
\end{equation}
so that we can define their corresponding strict transformations for $i = 1,2,3$ as:
\begin{equation}\label{eqb2}
	\begin{aligned}
		B_i := (f_2)_*^{-1} (\hat{B}_i)\\
		\tilde{B_i} := (\tau_2')_*^{-1}(B_i).
	\end{aligned}
\end{equation}

\begin{figure}
	\centering
	\begin{tikzpicture}[scale=0.85]
		\draw[gray, thick] (0,0) -- (2.5,1);
		\draw[gray, thick] (2,1) -- (4,0);
		\foreach \Point in {(4.5,0), (4.75,0), (5,0), (5.25,0)}{
			\node at \Point {\textbullet};
		}
		\draw[gray, thick] (5.5,0) -- (7.5,1);
		\draw[gray, thick] (7,1) -- (9,0);
		\draw[red, thick] (7.25,1) -- (7.25,-0.25);
		\foreach \Point in {(9.5,1), (9.75,1), (10,1), (10.25,1)}{
			\node at \Point {\textbullet};
		}
		\draw[gray, thick] (10.5,1) -- (13,0) ;
		\draw[gray, thick] (12.5,0) -- (14,1);
		\node[text width = 0.1cm, fill=white] at (1.5,-0.5) 
		{$\tilde{E}_1:(x:y^{n-1})$};
		\node[text width = 0.1cm, fill=white] at (3,-0.5) 
		{$\tilde{E}_2:(x^2:y^{n-2})$};
		\node[text width = 0.1cm, fill=white] at (5.25,-1) 
		{$\tilde{E}_{(n-1)/2}:(x^{(n-1)/2}:y^{(n+1)/2})$};
		\node[text width = 0.1cm, fill=white] at (7.75,-1) 
		{$\tilde{E}_{(n+1)/2}:(x^{(n+1)/2}:y^{(n-1)/2})$};
		\node[text width = 0.1cm, fill=white] at (7.125,-0.5) 
		{$\tilde{B}_3$};
		\node[text width = 0.1cm, fill=white] at (11,-0.5) 
		{$\tilde{E}_{n-2}:(x^{n-2}:y^2)$};
		\node[text width = 0.1cm, fill=white] at (13.1,-0.65) 
		{$\tilde{E}_{n-1}:(x^{n-1}:y)$};
	\end{tikzpicture}
	
	\begin{center}
		$\downarrow{\tau_2'}$
	\end{center}
	
	\begin{tikzpicture}[scale=0.85]
		\draw[gray, thick] (0,0) -- (3,1);
		\draw[gray, thick] (2.5,1) -- (5,0);
		\foreach \Point in {(5.5,0), (5.75,0), (6,0), (6.25,0)}{
			\node at \Point {\textbullet};
		}
		\draw[gray, thick] (6.5,0) -- (14,1);
		\draw [red] plot [smooth, tension=1.125] coordinates { (9,-0.6) (11.353,0.647) (13,-0.5) };
		\node[text width = 0.25cm, fill=white] at (1.5,0) 
		{$E_1$};
		\node[text width = 0.25cm, fill=white] at (3,0.25) 
		{$E_2$};
		\node[text width = 0.25cm, fill=white] at (7,-0.5) 
		{$E_{(n-1)/2}$};
		\node[text width = 0.25cm, fill=white] at (11,0) 
		{$\tau_2'(\tilde{B}_3)$};
	\end{tikzpicture}
	\caption{Configuration of Exceptional Divisors and Boundary Divisors on $X_1$ and $Y_1$ for Odd $n$ Case}\label{config1}
\end{figure}
\begin{figure}
	\begin{tikzpicture}[scale=0.85]
		\draw[gray, thick] (0,0) -- (3,1);
		\draw[gray, thick] (2.5,1) -- (5,0);
		\foreach \Point in {(5.5,0), (5.75,0), (6,0), (6.25,0)}{
			\node at \Point {\textbullet};
		}
		\draw[gray, thick] (6.5,0) -- (9,1);
		\foreach \Point in {(9.5,1), (9.75,1), (10,1), (10.25,1)}{
			\node at \Point {\textbullet};
		}
		\draw[gray, thick] (10.5,1) -- (13,0) ;
		\draw[gray, thick] (12.5,0) -- (14,1);
		\draw[red, thick] (7.5,1) -- (8.5,-1);
		\draw[red, thick] (8.5,1) -- (9.5,-1);
		\node[text width = 0.1cm, fill=white] at (1.5,-0.5) 
		{$\tilde{E}_1:(x:y^{n-1})$};
		\node[text width = 0.1cm, fill=white] at (3,-0.5) 
		{$\tilde{E}_2:(x^2:y^{n-2})$};
		\node[text width = 0.1cm, fill=white] at (6.5,-1) 
		{$\tilde{E}_{n/2}:(x^{n/2}:y^{n/2})$};
		\node[text width = 0.1cm, fill=white] at (8,-1) 
		{$\tilde{B}_1$};
		\node[text width = 0.1cm, fill=white] at (9.5,-1) 
		{$\tilde{B}_2$};
		\node[text width = 0.1cm, fill=white] at (11,-0.5) 
		{$\tilde{E}_{n-2}:(x^{n-2}:y^2)$};
		\node[text width = 0.1cm, fill=white] at (13.1,-0.65) 
		{$\tilde{E}_{n-1}:(x^{n-1}:y)$};
	\end{tikzpicture}
	
	\begin{center}
		$\downarrow{\tau_2'}$
	\end{center}
	
	\begin{tikzpicture}[scale=0.85]
		\draw[gray, thick] (0,0) -- (3,1);
		\draw[gray, thick] (2.5,1) -- (5,0);
		\foreach \Point in {(5.5,0), (5.75,0), (6,0), (6.25,0)}{
			\node at \Point {\textbullet};
		}
		\draw[gray, thick] (6.5,0) -- (14,1);
		\draw[red, thick] (8,1) -- (9,-0.35);
		\draw[red, thick] (11,1) -- (12,0);
		\node[text width = 0.25cm, fill=white] at (1.5,0) 
		{$E_1$};
		\node[text width = 0.25cm, fill=white] at (3,0.25) 
		{$E_2$};
		\node[text width = 0.25cm, fill=white] at (7,-0.5) 
		{$E_{n/2}$};
		\node[text width = 0.25cm, fill=white] at (9.5,-0.5) 
		{$\tau_2'(\tilde{B}_1)$};
		\node[text width = 0.25cm, fill=white] at (12,0) 
		{$\tau_2'(\tilde{B}_2)$};
	\end{tikzpicture}
	\caption{Configuration of Exceptional Divisors and Boundary Divisors on $X_1$ and $Y_1$ for Even $n$ Case}\label{config2}
\end{figure}

Using the notations in \cite{nollasekiya}, we define $f_1 := x^{2m+1} + y^{2m+1}$ and $f_2 := x^{2m+1} - y^{2m+1}$ in the odd $n$ case; and $f_1 := x^m + y^m$ and $f_2 := x^m - y^m$ in the even $n$ case. We also note here that $\mathbb{C}[x,y]^{D_{2n}} = \mathbb{C}[xy, x^n + y^n]$. 

\begin{figure}
	\begin{tikzcd}[row sep=1em,column sep=1em]
		\circ_{E_1: \rho_1} \arrow[r, dash] & \circ_{E_2: \rho_2} \arrow[r, dash]\arrow[l, dash] & \cdots \arrow[r, dash]\arrow[l, dash] & \circ_{E_{(n-1)/2}: \rho_{\frac{n-1}{2}}} \arrow[r, red, dash]\arrow[l, dash] & \circ_{B: \rho_{\frac{n-1}{2}}} \\
	\end{tikzcd} \begin{tikzcd}[row sep=1em,column sep=1em]
		& & & & & \circ_{B_1: \rho'_{n/2}}  \\
		& \circ_{E_1: \rho_1} \arrow[r, dash] & \circ_{E_2: \rho_2} \arrow[r, dash]\arrow[l, dash] & \cdots \arrow[r, dash]\arrow[l, dash] & \circ_{E_{n/2}:\rho_{n/2} \oplus \rho_{n/2}'} \arrow[red,ur, dash]\arrow[l, dash]\\
		& & & & & \circ_{B_2: \rho_{n/2}} \arrow[red,ul, dash]
	\end{tikzcd}
	\caption{Dual graph of the exceptional divisors and boundary divisors of $f_2$}
\end{figure}

We prepare some propositions.

\begin{proposition}\label{smth}
	The surface $Y_1$ is smooth. Hence, $f_2$ is a resolution of $(\mathbb{C}^2/G, \hat{B})$.
\end{proposition}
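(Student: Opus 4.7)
The plan is to apply the Chevalley--Shephard--Todd theorem to the action of $G/H \cong \mathbb{Z}_2 = \langle \tau \rangle$ on the smooth surface $X_1$. Since $\langle \tau \rangle$ has only the trivial proper subgroup, $Y_1 = X_1/\langle \tau \rangle$ is smooth if and only if, at every fixed point $p$ of $\tau$, the differential $d\tau_p$ is a complex pseudoreflection on $T_p X_1$, i.e.\ has eigenvalues $\{+1,-1\}$ rather than $\{-1,-1\}$. Equivalently, the fixed locus $X_1^{\tau}$ must be a smooth divisor with no isolated components.

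First I would identify $X_1^{\tau}$ away from the exceptional locus of $f_1$. On $\mathbb{C}^2$ the $\tau$-invariant $H$-orbits lie on the lines $\{y = \epsilon^k x\}$; these lines fall into a single $H$-orbit when $n$ is odd (cutting out $\hat{B}_3$ in $\mathbb{C}^2/H$) and into two $H$-orbits when $n$ is even (cutting out $\hat{B}_1$ and $\hat{B}_2$). Their strict transforms, defined in \eqref{eqb2}, therefore constitute the fixed set of $\tau$ on $X_1 \setminus \bigcup_i \tilde{E}_i$; at any point of such a smooth curve the tangent space decomposes as (tangent to the curve) $\oplus$ (normal direction), and since $\tau$ has order two and is non-trivial, its action is forced to be $\operatorname{diag}(+1,-1)$.

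It remains to inspect points on the exceptional divisors. The involution $\tau$ swaps $\tilde{E}_i \leftrightarrow \tilde{E}_{n-i}$, so when $n$ is odd no $\tilde{E}_i$ is preserved, and the only new fixed point on the exceptional set is the node $p = \tilde{E}_{(n-1)/2} \cap \tilde{E}_{(n+1)/2}$; in the toric chart around $p$, with transverse local coordinates along the two divisors, $\tau$ acts as the swap $(u,v) \mapsto (v,u)$, which diagonalises to $\operatorname{diag}(+1,-1)$, and whose $(+1)$-eigenvector is tangent to $\tilde{B}_3$ passing through $p$ (cf.\ Figure~\ref{config1}). When $n$ is even, $\tilde{E}_{n/2}$ is preserved setwise and $\tau|_{\tilde{E}_{n/2}}$ is a non-trivial involution of $\mathbb{P}^1$ with exactly two fixed points, which are the intersection points with $\tilde{B}_1$ and $\tilde{B}_2$; in the tangent space at either, the $(-1)$-eigenvector lies along $\tilde{E}_{n/2}$ and the $(+1)$-eigenvector lies along the corresponding $\tilde{B}_i$ (cf.\ Figure~\ref{config2}). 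In every case $d\tau_p$ is a pseudoreflection, so Chevalley--Shephard--Todd gives smoothness of $Y_1$. Since $f_2$ is induced from the proper birational $f_1$ via the finite quotient by $\mathbb{Z}_2$, it is itself proper and birational, and hence a resolution of $(\mathbb{C}^2/G, \hat{B})$.

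The main obstacle lies in the explicit toric chart computation at the central node (odd $n$) or along $\tilde{E}_{n/2}$ (even $n$): one must verify in coordinates that $\tilde{B}_3$ (resp.\ $\tilde{B}_1 \sqcup \tilde{B}_2$) meets the relevant exceptional divisor transversally and is tangent to the $(+1)$-eigenspace of $d\tau$. This reduces to chasing through the coordinates on $X_1$ coming from the standard toric resolution of the $A_{n-1}$-singularity and checking, using the defining equations \eqref{eqb1}, that each $\tilde{B}_i$ has the claimed tangent direction at the fixed point.
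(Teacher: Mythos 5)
Your proof is correct and follows essentially the same route as the paper: both show that the $\mathbb{Z}_2$-action on $X_1$ is by pseudoreflections by locating its fixed locus (the strict transforms of the boundary divisors, meeting the exceptional set only at the node $\tilde{E}_{(n-1)/2}\cap\tilde{E}_{(n+1)/2}$ for odd $n$, resp.\ at the two points $I_{n/2}(1:\pm 1)$ on $\tilde{E}_{n/2}$ for even $n$) and then invoking Chevalley--Shephard--Todd. The only cosmetic difference is that you verify the pseudoreflection condition by diagonalising $d\tau$ in a toric chart at each exceptional fixed point, whereas the paper checks instead that every fixed point lies on the one-dimensional fixed curve via explicit strict-transform equations in the affine charts; these verifications are equivalent.
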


\begin{proof}
	We compute $(X_1)^{\mathbb{Z}_2}$, the fixed locus of the $\mathbb{Z}_2$-action on $X_1$ by taking a closed subscheme $V$ to $g \cdot V$, where $g$ is an element of $\mathbb{Z}_2$.
	
	The points of $X_1$ are $G$-invariant $0$-dimensional subscheme of $\mathbb{C}^2$ (whose space of global sections is isomorphic to the regular representation, i.e. $H^0(O_Z) \cong \mathbb{C}[G]$), and so it can be realized as an ideal defining the aforementioned closed subscheme of $\mathbb{C}^2$. Referring to Thm. 2.2 of \cite{itonak1}; and Remark 9.7, Lemma 12.2, and Theorem 12.3 of \cite{itonak2}, we can identify the points on the exceptional divisors of $X_1$ as $I_i(a_i : b_i) = \langle a_ix^i - b_iy^{n-i}, x^{i+1}, xy, y^{n+1-i} \rangle$, where $1 \leq i \leq n$ and $(a_i:b_i) \in \mathbb{P}^1$; or equivalently, using the open affine covers of $X_1 = \bigcup_{i=1}^n U_i := \bigcup_{i=1}^n \operatorname{Spec}\left(\mathbb{C}\left[ \frac{x^i}{y^{n-i}}, \frac{y^{n+1-i}}{x^{i-1}} \right]\right)$, the points $\left( \frac{x^i}{y^{n-i}}, \frac{y^{n+1-i}}{x^{i-1}} \right) = (0, b)$ and $\left( \frac{x^i}{y^{n-i}}, \frac{y^{n+1-i}}{x^{i-1}} \right) = (a, 0)$ on the exceptional divisor correspond to $I_{i-1}(b:1)$ and $I_i(1:a)$, respectively.
	
	Furthermore, the $\mathbb{Z}_2$-action sends $I_i(a_i : b_i) = \langle a_ix^i - b_iy^{n-i}, x^{i+1}, xy, y^{n+1-i} \rangle$ to $I_{n-i}(b_i :a_i) = \langle a_iy^i - b_ix^{n-i}, x^{n-i+1}, xy, y^{i+1} \rangle$, so that the fixed points on the exceptional divisors of $X_1$ are:
	
	When $n$ is odd, $I_{(n-1)/2}(0:1) = I_{(n+1)/2}(1:0)$.
	
	When $n$ is even, there are two fixed points, $I_{n/2}(1:1)$ or $I_{n/2}(-1:1)$.
	
	From the commutative diagram below, we can check smoothness of $Y_1$ by verifying that the $\mathbb{Z}_2$ acts on $X_1$ as a pseudoreflection. This amounts in showing that the fixed points under the $\mathbb{Z}_2$-action are on $\overline{((f_1)_*^{-1} \circ (\tau_2)_*^{-1}) (\hat{B}_{i,0})}$.
	
	\begin{tikzcd}
		(\tau_2)_*^{-1}(\hat{B}_{i,0}) \subset \mathbb{C}^2/H - \{(0,0)\} \arrow[d, "\tau_2"] & X_1 - \Sigma(\tilde{E}_i) \arrow[d, "\tau_2'"] \arrow[l, "f_1"]\\
		\hat{B}_i - \{(0,0)\} =: \hat{B}_{i,0} \subset \mathbb{C}^2/G - \{(0,0)\} & Y_1 - \Sigma(E_i) \arrow[l, "f_2"]
	\end{tikzcd}
	
	First, we define the open affine covering of $X_1 = \bigcup_{i=1}^n U_i$ as:
	$$X_1 = \bigcup_{i=1}^n U_i := \bigcup_{i=1}^n \operatorname{Spec}\left(\mathbb{C}\left[ \frac{x^i}{y^{n-i}}, \frac{y^{n+1-i}}{x^{i-1}} \right]\right).$$
	
	By the definition of $\hat{B}_i$ from (\ref{eqb1}), we compute the strict transform $((f_1)_*^{-1} \circ (\tau_2)_*^{-1}) (\hat{B}_i)$ on each of the affine open sets covering $X_1$.
	
	On $\operatorname{Spec}\left(\mathbb{C}\left[ \frac{x^i}{y^{n-i}}, \frac{y^{n+1-i}}{x^{i-1}}   \right]\right)$,
	\begin{align*}
	(x^n - y^n)^2 &= \left(\frac{x^i}{y^{n-i}}\right)^{2(n+1-i)} \left(\frac{y^{n+1-i}}{x^{i-1}}\right)^{2(n-i)} \\
	& - 2 \left(\frac{x^i}{y^{n-i}}\right)^n \left(\frac{y^{n+1-i}}{x^{i-1}}\right)^n + \left(\frac{x^i}{y^{n-i}}\right)^{2(i-1)} \left(\frac{y^{n+1-i}}{x^{i-1}}\right)^{2i}
	\end{align*}
	For $\hat{B_1}$:
	$$0 = x^n + 2(xy)^{n/2} + y^n = \left(\frac{x^{n/2}}{y^{n/2}}\right)^{\frac{n-2}{2}} \left(\frac{y^{(n+2)/2}}{x^{(n-2)/2}}\right)^{n/2} \left[ \left(\frac{x^{n/2}}{y^{n/2}}\right) + 1 \right]^2$$
	so that the strict transform is the line $\frac{x^{n/2}}{y^{n/2}} = -1$ on the open set $U_{n/2}$. The coordinate $(\frac{x^{n/2}}{y^{n/2}}, \frac{y^{(n+2)/2}}{x^{(n-2)/2}}) = (-1,0)$ corresponds to the point on the $G$-Hilbert scheme $I_{n/2}(1:-1)$. This works similarly for $U_{(n/2) + 1}$ and $U_{(n+1)/2}$ (for odd $n$). The same argument works for $\hat{B_2}$ in which we obtain $I_{n/2}(1:1)$; and for $\hat{B_3}$ in which we obtain $I_{(n-1)/2}(0:1) = I_{(n+1)/2}(1:0)$. This completes the description of the strict transform of the boundary divisors.
	
	To show further that the closure $\overline{((f_1)_*^{-1} \circ (\tau_2)_*^{-1}) (\hat{B}_i)}$ does not exist on other open sets other than $U_{n/2}, U_{(n/2) + 1}, U_{(n+1)/2}$, we again notice that WLOG:
	
	\begin{align*}
		(x^n - y^n)^2 &= \left(\frac{x^i}{y^{n-i}}\right)^{2(i-1)} \left(\frac{y^{n+1-i}}{x^{i-1}}\right)^{2i} \cdot \\
		& \left[ \left(\frac{x^i}{y^{n-i}}\right)^{2(n+2-2i)} \left(\frac{y^{n+1-i}}{x^{i-1}}\right)^{2(n-2i)} - 2 \left(\frac{x^i}{y^{n-i}}\right)^{n + 2 - 2i} \left(\frac{y^{n+1-i}}{x^{i-1}}\right)^{n-2i} + 1 \right]
	\end{align*}
	
	This implies that plugging any of the coordinates of $U_i$ to zero does not lie on the strict transform defined by: $$0 = \left(\frac{x^i}{y^{n-i}}\right)^{2(n+2-2i)} \left(\frac{y^{n+1-i}}{x^{i-1}}\right)^{2(n-2i)} - 2 \left(\frac{x^i}{y^{n-i}}\right)^{n + 2 - 2i} \left(\frac{y^{n+1-i}}{x^{i-1}}\right)^{n-2i} + 1.$$ This completes the description of the strict transform of the boundary divisors.
	
	These all imply that $dim((X_1)^{\mathbb{Z}_2}) = 1$, or equivalently, $\mathbb{Z}_2$ acts as a pseudo-reflection on $X_1$, which implies that the boundary divisor $B'$ on $Y_1$ determined by the equation $K_{X_1} = \tau_2'^*(K_{Y_1} + B')$ is smooth and so $Y_1$ is smooth. 
\end{proof}

\begin{remark}
	A more general statement for the smoothness of $\tau_2'$ is as follows:
	
	\begin{corollary}
		For $H \subset \operatorname{SL}(2), Y_1 := H\operatorname{-Hilb}(\mathbb{C}^2)$, so that $G/H$ is cyclic, then $Y_1/(G/H)$ is smooth iff $G/H$ is a (cyclic) complex reflection group (or equivalently, if $G/H$ has a local linear action on $Y_1$ by pesudoreflections).
	\end{corollary}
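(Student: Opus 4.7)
The strategy is to generalize the argument of Proposition \ref{smth} using the local version of the Chevalley--Shephard--Todd theorem. The only obstruction to smoothness of the quotient $Y_1/(G/H)$ lies at points of $Y_1$ with nontrivial stabilizer, so the problem reduces to a local analysis at each such fixed point.

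\smallskip

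First, for any closed point $p \in Y_1$ with nontrivial stabilizer $\Gamma_p := (G/H)_p \subset G/H$, I would apply Luna's \'etale slice theorem (or, equivalently, Cartan's lemma on linearization of finite group actions near fixed points, valid since $\Gamma_p$ is finite and hence linearly reductive in characteristic zero). This gives an \'etale neighborhood of the image $\bar p \in Y_1/(G/H)$ isomorphic to $T_p Y_1/\Gamma_p$, so that $Y_1/(G/H)$ is smooth at $\bar p$ if and only if the linear quotient $T_p Y_1/\Gamma_p \cong \mathbb{C}^2/\Gamma_p$ is smooth at the origin.

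\smallskip

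Second, I would invoke the Chevalley--Shephard--Todd theorem: the quotient $\mathbb{C}^2/\Gamma_p$ is smooth if and only if $\Gamma_p$ acts on $T_p Y_1$ as a group generated by pseudoreflections. Since $G/H$ is cyclic by hypothesis, every stabilizer $\Gamma_p$ is cyclic as well. A cyclic subgroup of $\operatorname{GL}(T_p Y_1) \cong \operatorname{GL}(2,\mathbb{C})$ is generated by pseudoreflections precisely when a generator is itself a pseudoreflection, i.e. fixes a one-dimensional subspace pointwise. This is exactly the condition that the fixed locus of (the generator of) $\Gamma_p$ has codimension one in $Y_1$ near $p$, which in turn is the condition that $G/H$ acts on $Y_1$ locally at $p$ by pseudoreflections.

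\smallskip

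Putting these together, $Y_1/(G/H)$ is smooth if and only if at every fixed point the stabilizer acts by pseudoreflections, which is exactly the statement that $G/H$ is a (cyclic) complex reflection group in its action on $Y_1$. The proof of Proposition \ref{smth} is the instance $H = \mathbb{Z}_n$, $G/H = \mathbb{Z}_2$, where the pseudoreflection condition was verified by computing that $\dim (X_1)^{\mathbb{Z}_2} = 1$; in the general case the same numerical criterion---codimension-one fixed locus for each stabilizer---is what must be checked. The main (mild) technical point is the invocation of Luna's slice theorem in an \'etale or formal neighborhood rather than Zariski-locally; beyond that the statement reduces to a direct application of Chevalley--Shephard--Todd combined with the cyclicity of $G/H$.
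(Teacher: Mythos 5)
The paper itself offers no proof of this corollary: it is stated inside a remark as ``a more general statement'' of Proposition \ref{smth}, whose proof is an explicit coordinate computation showing that the $\mathbb{Z}_2$-fixed locus of $X_1$ is a divisor. Your strategy --- localize at points with nontrivial stabilizer, linearize the action there via Luna/Cartan, and invoke Chevalley--Shephard--Todd on the tangent space --- is the right abstract generalization of that computation, and your first two steps are sound.

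Your third step, however, contains a genuine error. For a cyclic subgroup $\Gamma \subset \operatorname{GL}(2,\mathbb{C})$, being generated by pseudoreflections is \emph{not} equivalent to a generator being a pseudoreflection, nor to the fixed locus having codimension one. Take $\Gamma = \langle g \rangle \cong \mathbb{Z}_6$ with $g = \operatorname{diag}(-1,\omega)$, $\omega^3 = 1$: the generator fixes only the origin, yet $g^3 = \operatorname{diag}(-1,1)$ and $g^2 = \operatorname{diag}(1,\omega^2)$ are pseudoreflections that generate $\Gamma$, and indeed $\mathbb{C}^2/\Gamma = \operatorname{Spec}\mathbb{C}[x^2,y^3]$ is smooth. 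So your ``numerical criterion'' (codimension-one fixed locus for each stabilizer) is sufficient for smoothness but not necessary, and the chain of equivalences in your third paragraph breaks precisely here: a smooth quotient does not force the stabilizer's generator to be a pseudoreflection. What Chevalley--Shephard--Todd actually gives is that $Y_1/(G/H)$ is smooth iff each stabilizer $\Gamma_p$, in its linearized action on $T_pY_1$, is \emph{generated by} pseudoreflections; to reduce further to a statement about fixed loci you must either rule out the $\mathbb{Z}_6$-type behaviour in this specific geometric situation or rephrase the criterion as ``the pseudoreflections contained in $\Gamma_p$ generate $\Gamma_p$.'' (The corollary's own parenthetical ``acts by pseudoreflections'' is ambiguous on the same point, but as written your proof asserts an equivalence that is false for cyclic groups.)
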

	
	The dihedral group is a special case of this.
\end{remark}

\begin{proposition}\label{crep41}
	The morphism $f_2: Y_1 \rightarrow (\mathbb{C}^2/G, \hat{B})$ is a crepant resolution.
\end{proposition}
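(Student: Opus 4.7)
The strategy is to trace canonical divisors around the commutative square $f_2 \circ \tau_2' = \tau_2 \circ f_1$, combining three standard ingredients: crepancy of the minimal resolution $f_1$, the ramification formula for the double cover $\tau_2'$, and the factorization of the defining equation for $\hat{B}$.

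First, because $H = \langle \sigma \rangle \subset \operatorname{SL}(2,\mathbb{C})$ contains no pseudoreflections, the projection $\tau_1 \colon \mathbb{C}^2 \to \mathbb{C}^2/H$ is \'etale in codimension one, so $K_{\mathbb{C}^2} = \tau_1^* K_{\mathbb{C}^2/H}$; and $f_1$ is crepant because it is the minimal resolution of a Du Val singularity. Factoring $\pi = \tau_2 \circ \tau_1$ in the defining equation $K_{\mathbb{C}^2} = \pi^*(K_{\mathbb{C}^2/G} + \hat{B})$ and using injectivity of $\tau_1^*$ on divisor classes immediately yields
\begin{equation*}
K_{\mathbb{C}^2/H} = \tau_2^*(K_{\mathbb{C}^2/G} + \hat{B}).
\end{equation*}
Pulling back by $f_1$ and invoking its crepancy together with $\tau_2 \circ f_1 = f_2 \circ \tau_2'$ then gives
\begin{equation*}
K_{X_1} = f_1^* K_{\mathbb{C}^2/H} = \tau_2'^* f_2^*(K_{\mathbb{C}^2/G} + \hat{B}).
\end{equation*}

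Second, Proposition \ref{smth} established that $\tau_2'$ is a finite morphism of smooth surfaces realizing $X_1$ as a double cover of $Y_1$ branched along the sum of the divisors $\tilde{B}_i$ (namely $\tilde{B}_3$ for odd $n$, or $\tilde{B}_1 + \tilde{B}_2$ for even $n$). Since for a ramified $\mathbb{Z}_2$-cover $\tau_2'^*(B_i) = 2\tilde{B}_i$ along each branch component, the Riemann--Hurwitz formula reads
\begin{equation*}
K_{X_1} = \tau_2'^*\bigl(K_{Y_1} + B'\bigr), \qquad B' := \tfrac{1}{2}\sum_i B_i,
\end{equation*}
where $B_i = \tau_2'(\tilde{B}_i) = (f_2)_*^{-1}(\hat{B}_i)$. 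Equating the two expressions for $K_{X_1}$ and using injectivity of $\tau_2'^*$ on $\mathbb{Q}$-divisor classes (valid for a finite surjective morphism of normal varieties), I conclude
\begin{equation*}
K_{Y_1} + B' = f_2^*(K_{\mathbb{C}^2/G} + \hat{B}).
\end{equation*}

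Finally, I identify $B'$ with $(f_2)_*^{-1}(\hat{B})$. Because all reflections in $D_{2n}$ (the elements $\tau\sigma^i$) have order two, each component of $\hat{B}$ carries coefficient $\tfrac{m-1}{m} = \tfrac{1}{2}$; for odd $n$ the reflections form a single conjugacy class, giving $\hat{B} = \tfrac{1}{2}\hat{B}_3$, and for even $n$ they split into two classes, giving $\hat{B} = \tfrac{1}{2}(\hat{B}_1 + \hat{B}_2)$. Taking strict transforms under $f_2$ and comparing with the computation of $B'$ above yields $B' = (f_2)_*^{-1}(\hat{B})$ in both cases, which is exactly the crepancy condition. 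The one real subtlety is this final coefficient-matching step; it rests on the explicit fixed-locus description from Proposition \ref{smth} together with the conjugacy classification of reflections in $D_{2n}$ by the parity of $n$.
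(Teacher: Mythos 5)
Your proof is correct and follows essentially the same route as the paper: both chase $K_{X_1}$ around the commutative square via crepancy of $f_1$ and the ramification formula for the double cover $\tau_2'$, then cancel $\tau_2'^*$ using injectivity of pullback under a finite surjective morphism. The only organizational difference is that the paper introduces the discrepancies $a_j$ of $f_2$ and argues they vanish, whereas you derive $K_{Y_1}+B'=f_2^*(K_{\mathbb{C}^2/G}+\hat{B})$ directly and then match the coefficient $\tfrac{1}{2}$ of $B'$ against that of $(f_2)_*^{-1}(\hat{B})$ --- a coefficient identification the paper asserts but leaves largely implicit.
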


\begin{proof}
	From the fact that $f_1$ is a crepant resolution:
	\begin{align*}
		K_{X_1} &= f_1^*(K_{\mathbb{C}^2/H})\\
		K_{Y_1} + (f_2)_*^{-1}(\hat{B}) &= f_2^*(K_{\mathbb{C}^2/G} + \hat{B}) + \Sigma_j a_j F_j\\
		K_{X_1} &= (\tau_2')^*(K_{Y_1} + B')
	\end{align*}
	We need to show that $(f_2)_*^{-1}(\hat{B}) = B' = B_{Y_1}$ by showing that $a_j = 0$ for all $j$.
	\begin{align*}
		K_{\mathbb{C}^2/H} &= \tau_2^*(K_{\mathbb{C}^2/G} + \hat{B})\\
		K_{X_1} = f_1^*(K_{\mathbb{C}^2/H}) &= f_1^*(\tau_2^*(K_{\mathbb{C}^2/G} + \hat{B})) \\
		&= (\tau_2 \circ f_1)^*(K_{\mathbb{C}^2/G} + \hat{B})\\
		&= (f_2 \circ \tau_2')^*(K_{\mathbb{C}^2/G} + \hat{B})\\
		&= \tau_2'^*(K_{Y_1} + (f_2)_*^{-1}(\hat{B}) - \Sigma_j a_j F_j),\\
		&\mbox{ (where $F_j$ are exceptional divisors of $f_2$)}\\
		&= \tau_2'^*(K_{Y_1} + (f_2)_*^{-1}(\hat{B})) - \Sigma_j a_j \tau_2'^*(F_j)
	\end{align*}
	This implies that $\tau_2'^*(F_j)$ are exceptional divisors for $f_1$ which forces the discrepancies to be zero.
\end{proof}

This proposition implies that $Y_1$ is also a crepant resolution of $(\mathbb{C}^2/G, \hat{B})$. Also, we recall the notion of the minimal embedded resolution of $(\mathbb{C}^2/G, \hat{B})$.

\begin{proposition}[Proposition 3.8 (Ch. V) of \cite{hartshorne1}]
	Let $C_0$ be an irreducible curve in the surface $X_0$. Then there exists a finite sequence of monoidal transformations (with suitable centers) $X_n \rightarrow X_{n-1} \rightarrow ... \rightarrow X_1 \rightarrow X_0$ such that the strict transform $C_n$ of $C_0$ on $X_n$ is nonsingular.
\end{proposition}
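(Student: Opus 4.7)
The plan is to prove this by induction on a non-negative integer invariant attached to the singularities of $C_0$ that strictly decreases with each well-chosen monoidal transformation. The natural choice is the total delta invariant
\[
\delta(C) \;:=\; \sum_{p \in \operatorname{Sing}(C)} \dim_{\mathbb{C}} \bigl( \widetilde{\mathcal{O}}_{C,p} / \mathcal{O}_{C,p} \bigr),
\]
where $\widetilde{\mathcal{O}}_{C,p}$ denotes the integral closure of the local ring $\mathcal{O}_{C,p}$ in its field of fractions. Since $C_0$ is irreducible, its singular locus is a proper closed subset of a $1$-dimensional variety, hence a finite set of closed points; the finiteness of the normalization $\nu\colon \widetilde{C}_0\to C_0$ then ensures that $\delta(C_0)$ is a finite non-negative integer.

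If $\delta(C_0) = 0$ then $C_0$ is already nonsingular and the empty sequence of blow-ups works, establishing the base case. Otherwise I would pick any singular point $p\in C_0$, of multiplicity $m_p \geq 2$, and let $\sigma\colon X_1 \to X_0$ be the monoidal transformation centered at $p$, with exceptional divisor $E\subset X_1$. Writing $C_1$ for the strict transform of $C_0$, the total transform decomposes as $\sigma^*C_0 = C_1 + m_p E$, and the new singularities of $C_1$ can only appear at finitely many points of $\sigma^{-1}(p)\cap C_1$ (the singularities of $C_0$ away from $p$ are unaffected because $\sigma$ is an isomorphism there).

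The core step is then the local inequality
\[
\delta_p(C_0) \;\geq\; \binom{m_p}{2} \;+\; \sum_{q\in \sigma^{-1}(p)\cap C_1} \delta_q(C_1),
\]
from which $m_p \geq 2$ gives $\delta(C_1) \leq \delta(C_0)-1$. Iterating and using that $\delta$ is a non-negative integer forces the process to terminate after finitely many blow-ups with $\delta(C_n) = 0$, i.e.\ $C_n$ nonsingular.

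The hard part is establishing the local inequality above, which is the content of Proposition 3.7 of Hartshorne Ch.~V and is where the combinatorics of multiplicities enters. I would derive it by working in the completion $\widehat{\mathcal{O}}_{X_0,p}\cong \mathbb{C}[[x,y]]$, writing $C_0$ locally as $(f=0)$ where $f$ has initial form of degree $m_p$, and comparing, on each affine chart of the blow-up, the local ring of the strict transform with the ring obtained by adjoining $y/x$ (or $x/y$) to $\mathcal{O}_{C_0,p}$. The discrepancy between this naive enlargement and the normalization contributes exactly the $m_p(m_p-1)/2$ term, since dividing $f$ by $x^{m_p}$ removes precisely a length $\binom{m_p}{2}$ piece of the conductor. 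Once this local computation is in hand, the descending induction on $\delta$ is purely formal, and the proposition follows.
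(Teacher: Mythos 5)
Your argument is correct and is essentially the proof the paper is pointing to: the statement is quoted verbatim from Hartshorne (Ch.~V, Prop.~3.8) without proof, and the standard proof there is exactly your descending induction, with the local inequality $\delta_p(C_0) \geq \binom{m_p}{2} + \sum_q \delta_q(C_1)$ playing the role of Hartshorne's Prop.~V.3.7 (which states the genus drop $\binom{m_p}{2}$ as an equality for the arithmetic genus). Your formulation via the total delta invariant rather than $p_a$ is a harmless and in fact slightly preferable variant here, since it does not require the curve to be projective and so applies directly to the quasi-projective surfaces $Y_1$ appearing in the paper.
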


The minimum $n$ that satisfies this proposition gives the minimal embedded resolution $X_n$ of $(X_0, C_0)$. This is minimal in the sense that if $Y$ is a smooth surface and dominates $X_0$, then it dominates $X_n$ as well. For instance, a smooth surface $Y$ with normal crossings dominates the minimal embedded resolution. A more detailed description is given in Theorem 3.9 (Ch. V) of \cite{hartshorne1}.

\begin{theorem}\label{maxemb}
	The maximal resolution $Y_{max}$ of $(\mathbb{C}^2/G, \hat{B})$ is isomorphic to the quotient variety $Y_1$. It is also the minimal embedded resolution of $(\mathbb{C}^2/G, \hat{B})$. Furthermore, the iterated Hilbert scheme $Y$ is also isomorphic to the maximal resolution $Y_{max}$.
\end{theorem}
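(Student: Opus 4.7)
I plan to prove the three assertions of Theorem~\ref{maxemb} --- namely that $Y_1\cong Y_{max}$, that $Y_1$ is the minimal embedded resolution of $(\mathbb{C}^2/G,\hat{B})$, and that $Y\cong Y_{max}$ --- in that order, using as inputs the smoothness of $Y_1$ (Proposition~\ref{smth}), the crepancy of $f_2$ (Proposition~\ref{crep41}), and the smoothness of the strict transform $B_{Y_1}:=(f_2)_*^{-1}(\hat{B})$ established in the proof of Proposition~\ref{smth}.

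\textbf{Step~1 (maximality).} Since $f_2$ is crepant, all the discrepancies $a_i$ of its exceptional divisors are zero and hence lie in $(-1,0]$, verifying the first condition of Definition~\ref{maxdef}. For the second condition, let $g\colon Z\to Y_1$ be any proper birational morphism that is not an isomorphism and set $h=f_2\circ g$. Crepancy of $f_2$ gives $h^*(K_{\mathbb{C}^2/G}+\hat{B})=g^*(K_{Y_1}+B_{Y_1})$ and $h_*^{-1}(\hat{B})=g_*^{-1}(B_{Y_1})$, so the coefficient of any $g$-exceptional divisor in the expansion $K_Z+h_*^{-1}(\hat{B})-h^*(K_{\mathbb{C}^2/G}+\hat{B})$ equals its discrepancy with respect to the pair $(Y_1,B_{Y_1})$. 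The latter pair has $Y_1$ smooth, $B_{Y_1}$ supported on a disjoint union of smooth curves meeting the exceptional locus of $f_2$ transversally (by the proof of Proposition~\ref{smth}), and all coefficients equal to $1/2<1$, so it is snc and klt. A single blow-up at any point $p\in Y_1$ yields discrepancy $1-\mathrm{mult}_p(B_{Y_1})\in\{1,1/2\}>0$, and an induction on towers of blow-ups shows that every exceptional valuation over $Y_1$ has strictly positive discrepancy with respect to $(Y_1,B_{Y_1})$. Hence some $b_j>0$, and by uniqueness of the maximal resolution cited before Definition~\ref{maxdef} we conclude $Y_1\cong Y_{max}$.

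\textbf{Step~2 (minimal embedded resolution).} $Y_1$ is an embedded resolution because both $Y_1$ and $B_{Y_1}$ are smooth. To show minimality, I compute the singularities of $\operatorname{supp}(\hat{B})$ on the smooth surface $\mathbb{C}^2/G\cong\operatorname{Spec}\mathbb{C}[u,v]$ (invariants $u=xy$, $v=x^n+y^n$): for odd $n$ the equation of $\hat{B}_3$ is $v^2=4u^n$, an $A_{n-1}$-cusp whose minimal embedded resolution takes exactly $(n-1)/2$ monoidal transformations; for even $n$, $\hat{B}_1\cup\hat{B}_2=\{v+2u^{n/2}=0\}\cup\{v-2u^{n/2}=0\}$ is a pair of smooth curves in contact of order $n/2$, whose full separation requires exactly $n/2$ blow-ups (each blow-up drops the intersection multiplicity by one). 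Both counts match the number of exceptional divisors of $f_2$ on $Y_1$ listed in~(\ref{excd}) and drawn in Figures~\ref{config1} and~\ref{config2}, so $Y_1$ attains the minimum and is therefore the minimal embedded resolution.

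\textbf{Step~3 ($Y\cong Y_{max}$) and main obstacle.} By Proposition~\ref{smth}, the action of $\mathbb{Z}_2=G/H$ on the smooth surface $X_1$ is a pseudoreflection (fixed locus of dimension one), so at each fixed point it is locally conjugate to $(x,y)\mapsto(x,-y)$. A direct description of the $\mathbb{Z}_2$-clusters --- a free orbit $\{p,g\cdot p\}$ when $p$ avoids the fixed curve, and the length-two subscheme $\operatorname{Spec}(\mathbb{C}[x,y]/(y^2,x-c))$ when the support is the fixed point $(c,0)$ --- yields a bijection $\mathbb{Z}_2\operatorname{-Hilb}(X_1)\leftrightarrow X_1/\mathbb{Z}_2$ that upgrades to a scheme isomorphism $Y\cong Y_1\cong Y_{max}$ (this is the standard fact that the $G$-Hilbert scheme of a smooth variety by a pseudoreflection group coincides with the quotient). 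The most delicate ingredient throughout is the positivity assertion in Step~1: the one-blow-up calculation is immediate, but \emph{every} exceptional valuation over $Y_1$ must be handled, so one argues inductively along arbitrary towers of blow-ups, tracking centres that sit on the strict transforms of $B_{Y_1}$ and of the $E_i$. The snc klt structure with boundary coefficient $1/2<1$ is precisely what prevents boundary-heavy centres from forcing a zero or negative discrepancy and so is exactly what makes the induction go through.
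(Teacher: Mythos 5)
Your proof is correct and follows essentially the same route as the paper: crepancy of $f_2$ reduces maximality to showing that every exceptional valuation over the log smooth pair $(Y_1, B_{Y_1})$ has strictly positive discrepancy (the paper does this by citing Kawamata together with the explicit $a=1/2$ computation of Remark \ref{oddboundary}, you by the terminality of a log smooth surface pair with boundary coefficient $1/2<1$), and $Y\cong Y_1$ follows from the pseudoreflection structure of the $\mathbb{Z}_2$-action on $X_1$, exactly as in the paper's lemma on $G\operatorname{-Hilb}$ for reflection groups. One small factual slip: for odd $n$ the strict transform $B_3$ is \emph{tangent} to $E_m$ (intersection number $2$; see Remark \ref{oddboundary} and the remark following Lemma \ref{refdivisor}), so $\operatorname{supp}(B_{Y_1})\cup\operatorname{Exc}(f_2)$ is not snc --- but this is harmless for your argument, since the $E_i$ carry coefficient $0$ in the pair $(Y_1,B_{Y_1})$ and only the smoothness and mutual disjointness of the components of $B_{Y_1}$ enter the discrepancy induction.
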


\begin{proof}
	First, from the proof of Theorem 1 in \cite{kawamata}, the maximal resolution of $(Y_1, B')$ defined by $h_{max}$ is also the maximal resolution of $(\mathbb{C}^2/G, \hat{B})$ defined by $h_{max} \circ f_2$. 
	
	Because $Y_1$ has at worst cyclic quotient singularities, and $(Y_1, B_{Y_1})$ has the smoothness property for both the variety and the boundary divisor, the minimal resolution of $(Y_1, (f_2)_*^{-1}(\hat{B}))$, i.e. $f_3: Y \rightarrow (Y_1, (f_2)_*^{-1}(\hat{B}))$ is crepant, and more strongly, $f_3 = id$, which implies that the maximal resolution of $(\mathbb{C}^2/G, \hat{B})$ is $Y_1$.
\end{proof}

\begin{remark}\label{oddboundary}
	An explicit way to do this is to consider the affine open covers of $Y_1$ via the open affine covers of $X_1$. We show this for the odd $n$ case, since the argument for the even case is similar.
	
	Because $f_2$ is a crepant resolution, it remains to compute the discrepancy of the blow-up $h: Z := Blp(Y_1) \rightarrow Y_1$, and we divide it into two cases depending on where the center of the blow-up is. The more interesting case is where the center of $h$ is on $f_{2*}^{-1}(\hat{B})$ (this can be realized also as the boundary divisor for the morphism $\tau_2'$):
	
	In the odd $n$ case, in $\mathbb{Z}_n\operatorname{-Hilb}(\mathbb{C}^2)$, the open set $\operatorname{Spec}\left(\mathbb{C}\left[\frac{x^{(n+1)/2}}{y^{(n-1)/2}}, \frac{y^{(n+1)/2}}{x^{(n-1)/2}}\right]\right)$ which covers the invariant locus under the $\mathbb{Z}_2$-action is $\mathbb{Z}_2$-invariant. Thus, we consider the open set $\operatorname{Spec}\left(\mathbb{C}\left[\frac{x^{(n+1)/2}}{y^{(n-1)/2}}, \frac{y^{(n+1)/2}}{x^{(n-1)/2}}\right]\right)^{\mathbb{Z}_2} = \operatorname{Spec}\left(\mathbb{C}\left[xy, \frac{f_1}{(xy)^m}\right]\right)$. The boundary locus in $\mathbb{Z}_n\operatorname{-Hilb}(\mathbb{C}^2)$ is $\left(\frac{x^{(n+1)/2}}{y^{(n-1)/2}} - \frac{y^{(n+1)/2}}{x^{(n-1)/2}} \right)^2 = 0$, which translates to $\left(\frac{f_1}{(xy)^m}\right)^2 - 4xy = 0$ on the invariant open set.
	
	For any point $\left(xy, \frac{f_1}{(xy)^m}\right) = \left(\frac{1}{4}a^2, a\right)$ on $\tilde{B}_m$, performing the coordinate change, we obtain the new equation: $\left(\frac{f_1}{(xy)^m}\right)^2 + 2a\frac{f_1}{(xy)^m} + a^2 = \left(\frac{f_1}{(xy)^m} + a\right)^2 = 4(xy + \frac{1}{4}a^2) = 4xy + a^2$.
	
	On $\operatorname{Spec}\left(\mathbb{C}\left[\frac{(xy)^{m+1}}{f_1}, \frac{f_1}{(xy)^m}\right]\right)$, the defining equation transforms to $\frac{f_1}{(xy)^m} - 2a = 4\frac{(xy)^{m+1}}{f_1}$. The exceptional divisor defines the equation $\frac{f_1}{(xy)^m} = 0$.
	
	Thus, the intersection number of $h_*^{-1}(B')$ with the exceptional divisor is $1/2$. Using the relation between canonical divisors, the discrepancy $a_{m+1} = 1/2$. For the even $n$ case, this reduces to a blow-up along lines which is treated similarly.
	
	Thus, $Y_1$ is the maximal resolution of $(\mathbb{C}^2/D_{2n}, \hat{B})$. Furthermore, because $f_3$ is a crepant resolution of $Y_1$, $f_3$ must be an isomorphism. Hence, $Y \cong (\mathbb{C}^2/G, \hat{B})_{max} \cong Y_1$.
\end{remark}

This particular assertion tells us that the maximal resolution can be realized as a moduli space of $G$-constellations which will help in our computations later.

Once again, we refer to (\ref{excd}) for the definition of the exceptional divisors for the next lemma:
\begin{lemma}\label{normal}
	For an exceptional divisor $\tilde{E}$ (resp. $E$) of $X_1$ (resp. $Y_1$), we know that the normal bundles $\mathcal{N}_{\tilde{E}/X_1}$ are of degree $-2$, or equivalently, $\mathcal{N}_{\tilde{E}/X_1} \cong O_{\tilde{E}}(-2)$. Then:
	\[\mathcal{N}_{E/Y_1} \cong \begin{cases}
		O_E(-1) & \mbox{ if $E = E_m$}\\
		O_E(-2) & \mbox{ if $E \neq E_m$}\\
	\end{cases} \]
\end{lemma}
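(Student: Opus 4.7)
The plan is to reduce the statement to a self-intersection computation on the smooth surface $Y_1$: for any smooth rational curve $E$ on a smooth surface, $\mathcal{N}_{E/Y_1}$ is the line bundle of degree $E\cdot E$ on $E\cong\mathbb{P}^{1}$, so the assertion is equivalent to $E^{2}=-1$ when $E=E_m$ and $E^{2}=-2$ otherwise. The smoothness and rationality of each $E$ are immediate from the geometry of $\tau_2'$: for $i\neq m$ the curve $\tilde E_i$ is disjoint from the $\mathbb Z_2$-fixed locus (which by the proof of Proposition \ref{smth} is contained in the strict transform $\tilde B$ of $\hat B$), so $\tau_2'|_{\tilde E_i}\colon\tilde E_i\to E_i$ is an isomorphism; the odd-$n$ case $E_m=\tau_2'(\tilde E_m)$ is similar (only one point of $\tilde E_m$ is fixed); and in the even-$n$ case $E_m\cong\tilde E_m/\mathbb Z_2\cong\mathbb{P}^{1}/\mathbb Z_2\cong\mathbb{P}^{1}$.

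The self-intersections on $Y_1$ will be computed via the projection formula for the finite degree-two morphism $\tau_2'\colon X_1\to Y_1$:
\[
2\,(E\cdot E)_{Y_1}\;=\;\bigl((\tau_2')^{*}E\cdot(\tau_2')^{*}E\bigr)_{X_1}.
\]
Since $\tau_2'$ is \'etale in codimension one away from the ramification locus $\tilde B$, and none of the $\tilde E_i$ is contained in $\tilde B$, the pullback of each exceptional divisor is reduced along every component of its support:
\[
(\tau_2')^{*}E_i=\tilde E_i+\tilde E_{n-i}\ (i\neq m),\qquad (\tau_2')^{*}E_m=\tilde E_m+\tilde E_{m+1}\ (n\text{ odd}),\qquad (\tau_2')^{*}E_m=\tilde E_m\ (n\text{ even}).
\]
Combining this with the standard $A_{n-1}$-intersection pattern $\tilde E_i^{2}=-2$ and $\tilde E_i\cdot \tilde E_{i+1}=1$ on $X_1$, one computes $(\tilde E_i+\tilde E_{n-i})^{2}=-4$ for $i\neq m$ (the two summands are disjoint), $(\tilde E_m+\tilde E_{m+1})^{2}=-2$ in the odd case, and $\tilde E_m^{2}=-2$ in the even case. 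Dividing by $\deg(\tau_2')=2$ gives $E_i^{2}=-2$ for $i\neq m$ and $E_m^{2}=-1$ in both parities, which is exactly the claim.

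The only genuinely subtle step is the multiplicity in $(\tau_2')^{*}E_m=\tilde E_m$ for even $n$: although $\tau_2'|_{\tilde E_m}$ is a $2{:}1$ cover, the two generic preimages of a point of $E_m$ are distinct points of the single irreducible curve $\tilde E_m$, exchanged by $\mathbb Z_2$. Because the ramification divisor $\tilde B=\tilde B_1\cup\tilde B_2$ meets $\tilde E_m$ only at the two fixed points of $\mathbb Z_2$, the morphism $\tau_2'$ is \'etale at the generic point of $\tilde E_m$, so the ramification index of $\mathcal O_{Y_1,\xi}\to\mathcal O_{X_1,\eta}$ at the generic points $\eta\in\tilde E_m$, $\xi=\tau_2'(\eta)$ is one, forcing multiplicity one in the Cartier pullback. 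Everything else is routine bookkeeping about which $\tilde E_i$ get swapped or self-mapped under $\mathbb Z_2$, guided by Figures \ref{config1} and \ref{config2}.
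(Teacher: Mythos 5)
Your proof is correct, but it takes a genuinely different route from the paper's. The paper deduces the normal bundle degrees from the log-crepancy of $f_2$ established in Proposition \ref{crep41}: intersecting $K_{Y_1} + (f_2)_*^{-1}(\hat{B}) = f_2^*(K_{\mathbb{C}^2/G} + \hat{B})$ with an exceptional curve gives $K_{Y_1}\cdot E = -(f_2)_*^{-1}(\hat{B})\cdot E$, which is $-1$ for $E_m$ (the boundary components, with their coefficients, meet only $E_m$) and $0$ otherwise, and then adjunction $E^2 = -2 - K_{Y_1}\cdot E$ yields the claim. You instead work upstairs on $X_1$, writing $(\tau_2')^*E$ in terms of the $\tilde{E}_i$ and dividing the known $A_{n-1}$ intersection numbers by $\deg(\tau_2')=2$ via the projection formula. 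Your version is more self-contained at the level of intersection theory (it does not invoke the crepancy computation, only the description of the $\mathbb{Z}_2$-fixed locus from Proposition \ref{smth}), and it makes explicit the one step where a naive computation could go wrong, namely that $(\tau_2')^*E_{n/2}=\tilde{E}_{n/2}$ appears with multiplicity one for even $n$ because $\tau_2'$ is \'etale at the generic point of $\tilde{E}_{n/2}$; the paper's version is shorter and reuses work already done, but leaves the intersection numbers of the strict transform of $\hat{B}$ with the $E_j$ to be read off from the configuration figures. Both arguments correctly cover the two parities and the three cases ($\tilde{E}_i \sqcup \tilde{E}_{n-i}$ disjoint, $\tilde{E}_m + \tilde{E}_{m+1}$ adjacent for odd $n$, and the self-mapped $\tilde{E}_{n/2}$ for even $n$).
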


\begin{proof}
	The first statement for $X_1$ is well-known since it is the (minimal) crepant resolution of the quotient singularity $\mathbb{C}^2/\mathbb{Z}_n$.
	
	We can compute the self-intersection number $E^2$ via the adjunction formula and given our computations in Proposition \ref{smth} regarding the fixed points of $X_1$ under the $\mathbb{Z}_2$-action.
	
	Because $f_2$ is crepant, we have $K_{Y_1} + (f_2)_*^{-1}(\hat{B}) = (f_2)^*(K_{\mathbb{C}^2/G} + \hat{B})$, so that $K_{Y_1} \cdot E_m = -1$ and $K_{Y_1} \cdot E = 0$ for $E \neq E_m$.
\end{proof}

\begin{corollary}\label{resev}
	The only resolutions dominated by the maximal resolution of $(\mathbb{C}^2/G, \hat{B})$ are essentially the blow-ups from $(\mathbb{C}^2/G, \hat{B})$ with center the singular point of the (strict transforms of the) boundary divisor $\hat{B}$.
\end{corollary}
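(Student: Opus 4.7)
The cleanest approach is to invoke the characterization of $Y_{max}$ as the minimal embedded resolution of $(\mathbb{C}^2/G, \hat{B})$ given by Theorem \ref{maxemb}. Since $\mathbb{C}^2/G$ is smooth, every resolution $W \to \mathbb{C}^2/G$ is a finite composition of point blow-ups $W = W_N \to \cdots \to W_0 = \mathbb{C}^2/G$ by Zariski's factorization theorem for birational morphisms of smooth surfaces. The minimal embedded resolution is constructed by iteratively blowing up the singular points of the strict transforms of $\hat{B}$ until the strict transform becomes smooth (Proposition 3.8 of \cite{hartshorne1}), and this canonical tower determines $Y_{max}$ uniquely; in particular the exceptional divisors of $Y_{max}$ over $\mathbb{C}^2/G$ are in bijection with the centers of this tower.

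For the necessity direction, suppose $W$ is dominated by $Y_{max}$, so that by Definition \ref{maxdef} every exceptional divisor $E$ of $W \to \mathbb{C}^2/G$ satisfies $-1 < a(E) \leq 0$ and moreover coincides with an exceptional divisor of $Y_{max}$. By the previous paragraph each such divisor arises from a blow-up centered at a singular point of an iterated strict transform of $\hat{B}$. The blow-up tower $W_N \to \cdots \to W_0$ is therefore a sub-tower (in a compatible order) of the canonical tower producing $Y_{max}$, so all of its centers are singular points of iterated strict transforms of $\hat{B}$. Conversely, any blow-up tower whose centers are singular points of strict transforms of $\hat{B}$ is a partial execution of the canonical tower and factors as $Y_{max} \to W \to \mathbb{C}^2/G$.

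A useful cross-check, which also explains \emph{why} only singular points work, is the explicit discrepancy calculation. For $h_{j+1} : W_{j+1} \to W_j$ the blow-up at $p$ with exceptional divisor $F$, the formulas $K_{W_{j+1}} = h_{j+1}^* K_{W_j} + F$ and $(h_{j+1})_*^{-1}(D) = h_{j+1}^* D - \operatorname{mult}_p(D) F$ give
\[ a(F) \;=\; 1 \;-\; \operatorname{mult}_p\!\bigl((\varphi_j)_*^{-1}(\hat{B})\bigr) \;+\; \sum_i a(E_i)\,\operatorname{mult}_p(E_i), \]
where $\varphi_j : W_j \to \mathbb{C}^2/G$ and the sum runs over $\varphi_j$-exceptional divisors through $p$. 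The local computation in Remark \ref{oddboundary} verifies that at a singular point of the strict transform of $\hat{B}_3$ this quantity lies in $(-1,0]$, while at a smooth point of the strict transform or off $\hat{B}$ it becomes positive; a parallel computation handles the even-$n$ case.

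The main obstacle I anticipate is the even-$n$ case with $n \geq 4$, where $\hat{B}_1$ and $\hat{B}_2$ meet tangentially so several blow-ups are required before the two strict transforms separate; matching the intermediate singular-point centers to the exceptional divisor configuration in Figure \ref{config2} requires careful local-chart bookkeeping parallel to Remark \ref{oddboundary}. The odd-$n$ case is cleaner since $\hat{B}_3$ has a single $A_{n-1}$ cusp whose embedded resolution tower is standard and matches Figure \ref{config1}.
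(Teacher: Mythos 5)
Your argument is correct, but it takes a genuinely different route from the paper's. The paper's proof is top-down: it leans on Lemma \ref{normal}, which shows that the exceptional locus of $Y_{max}\cong Y_1$ is a chain of $(-2)$-curves $E_1,\dots,E_{m-1}$ terminated by the single $(-1)$-curve $E_m$; by Castelnuovo the only curve contractible to a smooth point is $E_m$, after whose contraction the image of $E_{m-1}$ becomes a $(-1)$-curve, and so on, so the smooth surfaces between $Y_{max}$ and $\mathbb{C}^2/G$ form the unique chain obtained by successively contracting the end of the chain, the image of each contracted curve being the singular point of the pushed-down boundary divisor. You instead argue bottom-up from the identification of $Y_{max}$ with the minimal embedded resolution (Theorem \ref{maxemb}) together with Zariski factorization, and your discrepancy cross-check reproduces the computation that the paper distributes between Remark \ref{oddboundary} and Definition \ref{maxdef}. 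The one step you should make explicit is why a ``sub-tower in a compatible order'' must be an \emph{initial segment} of the canonical tower: the centers $p_0,p_1,\dots$ are infinitely near to one another (each $p_j$ lies on the exceptional curve of the blow-up at $p_{j-1}$), so the valuation $E_j$ cannot be extracted on a smooth surface over $\mathbb{C}^2/G$ without first extracting $E_1,\dots,E_{j-1}$; knowing only that the exceptional divisors of $W$ form a subset of those of $Y_{max}$ would not by itself pin down $W$. With that supplied, both directions go through, and your approach has the advantage of explaining \emph{why} only singular points of the strict transform are admissible centers (the discrepancy becomes positive at any other center), a point the paper's one-line proof leaves implicit.
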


\begin{proof}
	Using the same argument as in Lemma \ref{normal}, after the blow-down of the $(-1)$-curve on $Y_1$ and so on, we obtain the result.
\end{proof}

The next lemma provides an isomorphism between the minimal resolution of the variety and the quotient variety.

\begin{lemma}
	For a complex reflection group $G \subset \operatorname{GL}(n,\mathbb{C})$, there is an isomorphism between the $G$-Hilbert scheme and the quotient variety. In symbols:
	$$G\operatorname{-Hilb}(\mathbb{C}^n) \cong \mathbb{C}^n/G.$$
\end{lemma}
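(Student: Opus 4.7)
The plan is to construct mutually inverse morphisms between $\mathbb{C}^n/G$ and $G\operatorname{-Hilb}(\mathbb{C}^n)$, one being the Hilbert-Chow morphism $\tau$ and the other arising from viewing the quotient map $\pi: \mathbb{C}^n \to \mathbb{C}^n/G$ itself as a flat family of $G$-clusters. The whole argument hinges on the Chevalley-Shephard-Todd theorem: for a complex reflection group $G$, the polynomial ring $R := \mathbb{C}[x_1,\ldots,x_n]$ is free of rank $|G|$ over its invariant subring $R^G$, and there is a $G$-equivariant $R^G$-module isomorphism $R \cong R^G \otimes_{\mathbb{C}} \mathbb{C}[G]$ (equivalently, the coinvariant algebra $R/R_+^G R$ realizes the regular representation).

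First I would use this to check that $\pi$ is a flat family of $G$-clusters. Freeness gives flatness, and for every closed point $p \in \mathbb{C}^n/G$ the fiber satisfies
\[
H^0(\mathcal{O}_{\pi^{-1}(p)}) \;=\; R \otimes_{R^G} \mathbb{C}(p) \;\cong\; \mathbb{C}[G]
\]
as a $G$-module, so it is a $G$-cluster. By the universal property of $G\operatorname{-Hilb}(\mathbb{C}^n)$, this family induces a morphism $\phi: \mathbb{C}^n/G \to G\operatorname{-Hilb}(\mathbb{C}^n)$.

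Next I would verify that $\phi$ and $\tau$ are mutually inverse. For $\tau \circ \phi = \operatorname{id}$, observe that over the free locus $U \subset \mathbb{C}^n/G$, both $\phi$ and $\tau$ restrict to the canonical identification between $U$ and the open subscheme of reduced $G$-orbits in $G\operatorname{-Hilb}(\mathbb{C}^n)$; since $\mathbb{C}^n/G$ is integral and separated, agreement on a dense open implies equality everywhere. For $\phi \circ \tau = \operatorname{id}$, take any $G$-cluster $Z \subset \mathbb{C}^n$ with ideal $I_Z$. Because $R/I_Z \cong \mathbb{C}[G]$ and taking $G$-invariants is exact in characteristic zero,
\[
R^G/(I_Z \cap R^G) \;\cong\; (R/I_Z)^G \;\cong\; \mathbb{C},
\]
so $I_Z \cap R^G = \mathfrak{m}_p$ for a unique maximal ideal $\mathfrak{m}_p \subset R^G$, and by construction this $p$ is precisely $\tau(Z)$. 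Hence $\mathfrak{m}_p R \subseteq I_Z$, giving a closed embedding $Z \hookrightarrow \pi^{-1}(p)$ of $0$-dimensional subschemes of $\mathbb{C}^n$. Since both have length $|G|$, they coincide, proving $\phi(\tau(Z)) = Z$.

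The main technical point to get right is the length count and the reduction $R^G/(I_Z \cap R^G) \cong \mathbb{C}$, which is the step where the $G$-cluster condition (regular representation on $H^0$) meets Chevalley-Shephard-Todd (constancy of the fiber length equal to $|G|$). Once those are set up, the inverse pair $\phi, \tau$ drops out formally, and no further case analysis depending on stabilizer strata of $G$ is required.
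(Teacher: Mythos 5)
Your strategy is close in spirit to the paper's: both arguments rest on Chevalley--Shephard--Todd to make $\pi\colon\mathbb{C}^n\to\mathbb{C}^n/G$ a flat family whose every fiber is a $G$-cluster (the paper phrases this via local freeness of the isotypic pieces $M_\rho$ of $\pi_*\mathcal{O}_{\mathbb{C}^n}$), and both identify an arbitrary $G$-cluster $Z$ with the fiber $\pi^{-1}(\tau(Z))$ by the inclusion $\mathfrak{m}_pR\subseteq I_Z$ plus a length count. Your construction of $\phi$ and the verification $\tau\circ\phi=\mathrm{id}$ are fine.

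The gap is in the last step. Showing $\phi(\tau(Z))=Z$ for every closed point $[Z]$ only proves that $\phi\circ\tau$ and $\mathrm{id}$ agree on closed points; to conclude they are equal as morphisms you would need $G\operatorname{-Hilb}(\mathbb{C}^n)$ to be reduced, which is not known a priori (Hilbert schemes of $G$-clusters can in general be reducible or non-reduced). What you have actually established is that $\phi$ is a closed immersion (being a section of the separated morphism $\tau$) that is bijective on closed points; this does not preclude nilpotents or excess tangent directions in $G\operatorname{-Hilb}(\mathbb{C}^n)$, i.e.\ it does not control $\operatorname{Hom}^G_R(I_Z,R/I_Z)$. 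The paper closes exactly this gap by proving the statement at the level of the functor of points: for a flat family $\mathcal{Z}$ of $G$-clusters over an \emph{arbitrary} locally Noetherian base $S$ it shows $\mathcal{Z}=S\times_{\mathbb{C}^n/G}\mathbb{C}^n$, by pushing the ideal sheaf $\mathcal{I}$ of $\mathcal{Z}\hookrightarrow S\times_{\mathbb{C}^n/G}\mathbb{C}^n$ down to $S$, checking its fibers vanish by your length count, and applying Nakayama together with finiteness of the projection; Yoneda then gives the scheme isomorphism. Your argument becomes complete if you run your final computation in families in this way (equivalently, take $S=G\operatorname{-Hilb}(\mathbb{C}^n)$ with the universal family), rather than only at $S=\operatorname{Spec}\mathbb{C}$.
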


\begin{proof}
	We consider the moduli functor for $G$-clusters:
	$$h: S \mapsto \{\mbox{flat families of $G$-clusters parametrized by $S$} \} / \equiv$$
	for a locally Noetherian scheme $S$ over $\mathbb{C}$ where $E_S \equiv F_S$ if and only if there is an $L$ in $\operatorname{Pic}(S)$ such that $E_S \cong F_S \otimes L$.
	
	The $G$-Hilbert scheme $G\operatorname{-Hilb}(\mathbb{C}^n)$ represents the functor $h$. Thus:
	$h(S) \cong \operatorname{Hom}_{Sch}(S, G\operatorname{-Hilb}(\mathbb{C}^n))$. We wish to show that $h(S) \cong \operatorname{Hom}_{Sch}(S, \mathbb{C}^n/G)$.
	
	We construct the map first from $h(S)$ to $\operatorname{Hom}_{Sch}(S, \mathbb{C}^n/G)$.
	
	(1) From $\upsilon: \operatorname{Hom}_{Sch}(S, \mathbb{C}^n/G)$ to $h(S)$.
	
	Given $\gamma_S \in \operatorname{Hom}_{Sch}(S, \mathbb{C}^n/G)$, consider the fiber product diagram:
	
	\begin{tikzcd}
		S \times_{\mathbb{C}^n/G} \mathbb{C}^n \arrow[d, "p_1"]\arrow[r, "p_2"] & \mathbb{C}^n \arrow[d, "p"]\\
		S \arrow[r, "\gamma_S"] & \mathbb{C}^n/G
	\end{tikzcd}
	
	It remains to show that every fiber of $p$ is a $G$-cluster. This implies that the fiber product $S \times_{\mathbb{C}^n/G} \mathbb{C}^n$ is a flat family of $G$-clusters.
	
	By the Chevalley-Shephard-Todd theorem, the morphism $p$ is flat. Then by the decomposition of $p_*(O_{\mathbb{C}^n/G}) = \bigoplus_{\rho \in \operatorname{Irr}(G)} M_{\rho} \otimes \rho$ over representations of a finite group $G$ with characteristic $0$, where $M_{\rho} = (p_*(O_{\mathbb{C}^n/G}) \otimes \rho^{\lor})^G$ is a finitely generated $O_{\mathbb{C}^n/G}$-module, each of the modules $M_{\rho}$ is locally free.
	
	Over the free locus on $\mathbb{C}^n/G$, the fiber consists of a $G$-cluster. Thus, considering a family of representations of a finite group $G$, the fiber over the non-free locus is also a $G$-cluster.
	
	(2) From $\Lambda: h(S)$ to $\operatorname{Hom}_{Sch}(S, \mathbb{C}^n/G)$.
	
	Given a flat family $\mathcal{Z}$ of $G$-clusters over a scheme $S$, which is a subscheme of $S \times \mathbb{C}^n$, we wish to construct a scheme morphism $\delta_{\mathcal{Z}}: S \rightarrow \mathbb{C}^n/G$. We consider first the following diagram:
	
	\begin{tikzcd}
		\mathcal{Z} \arrow[d, "p_1"]\arrow[r, "p_2"] & \mathbb{C}^n \arrow[d, "p"]\\
		S \arrow[r, dashed, "\delta_{\mathcal{Z}}"] & \mathbb{C}^n/G
	\end{tikzcd}
	
	Taking note that the action of $G$ on $S$ is trivial, so that again, there is a decomposition of $(p_1)_*(O_{\mathcal{Z}}) = \bigoplus_{\rho \in \operatorname{Irr}(G)} S_{\rho} \otimes \rho$.
	
	Taking the $G$-invariant sections gives $[(p_1)_*(O_{\mathcal{Z}})]^G = S_{\rho_0}$, which is of rank $1$, which is generated by the non-vanishing global section $1$. This implies that $S_{\rho_0} = O_S$ and $\mathcal{Z}/G = S$.
	
	Thus, the morphism $\delta_{\mathcal{Z}}: \mathcal{Z}/G = S \rightarrow \mathbb{C}^n/G$ is induced by the morphism $p_2: \mathcal{Z} \rightarrow \mathbb{C}^n$.
	
	(3) Now that we have constructed the maps, we want to show that the maps $\upsilon$ and $\Lambda$ induce a bijection between sets. First, we show that $\Lambda \circ \upsilon = 1_{\operatorname{Hom}_{Sch}(S, \mathbb{C}^n/G)}$.
	
	By the construction of the map $\Lambda$, $p_2$ induces the map $(p_2)/G: S = (S \times_{\mathbb{C}^n/G} \mathbb{C}^n/G) = (S \times_{\mathbb{C}^n/G} \mathbb{C}^n)/G \rightarrow \mathbb{C}^n/G$. From the construction, $S$ is a categorical quotient $p_1$. Thus, by the universality property of the categorical quotient applied to the morphism $p \circ p_2$, $\gamma_S = (p_2)/G$.
	
	(4) We now show that $\upsilon \circ \Lambda = 1_{h(S)}$. This amounts to show that $\mathcal{Z} = S \times_{\mathbb{C}^n/G} \mathbb{C}^n$.
	
	Consider the inclusion morphism $i: \mathcal{Z} \hookrightarrow S \times_{\mathbb{C}^n/G} \mathbb{C}^n$, which is a closed immersion (via the closed immersion $S \times_{\mathbb{C}^n/G} \mathbb{C}^n \hookrightarrow S \times_{\mathbb{C}} \mathbb{C}^n$), induced by the universal property of the fiber product diagram. Then we have the exact sequence:
	$$0 \rightarrow \mathcal{I} \rightarrow O_{S \times_{\mathbb{C}^n/G} \mathbb{C}^n} \rightarrow i_*(O_{\mathcal{Z}}) \rightarrow 0$$
	where $\mathcal{I}$ is the kernel of $O_{S \times_{\mathbb{C}^n/G} \mathbb{C}^n} \rightarrow i_*(O_{\mathcal{Z}})$.
	
	Because $p_1$ is finite, the pushforward functor $(p_1)_*$ is exact:
	$$0 \rightarrow (p_1)_*\mathcal{I} \rightarrow (p_1)_*(O_{S \times_{\mathbb{C}^n/G} \mathbb{C}^n}) \rightarrow (p_1)_*(i_*(O_{\mathcal{Z}})) = (p_1)_*(O_{\mathcal{Z}}) \rightarrow 0$$
	Because both $S \times_{\mathbb{C}^n/G} \mathbb{C}^n$ and $\mathcal{Z}$ are flat families of $G$-clusters over $S$, the sheaf $(p_1)_*(O_{\mathcal{Z}})$ is flat and every fiber of both $(p_1)_*(O_{S \times_{\mathbb{C}^n/G} \mathbb{C}^n})$ and $(p_1)_*(O_{\mathcal{Z}})$ are $G$-clusters. Taking the fibers over $s \in S$ in the exact sequence above implies that
	
	$[(p_1)_*(O_{S \times_{\mathbb{C}^n/G} \mathbb{C}^n})](s) := (p_1)_*(O_{S \times_{\mathbb{C}^n/G} \mathbb{C}^n})_s \otimes (O_{S,s}/\mathfrak{m}_{S,s})$ and 
	
	$[(p_1)_*(i_*(O_{\mathcal{Z}})) = (p_1)_*(O_{\mathcal{Z}})](s)$ have the same dimension as vector spaces over $\mathbb{C}$. Thus, they are isomorphic as vector spaces. This leaves the fiber $[(p_1)_*\mathcal{I}](s) = 0$ for all $s \in S$. By Nakayama Lemma for local rings applied to the coherent sheaf $(p_1)_*\mathcal{I}$, for $\mathcal{I}$ is coherent and $p_1$ is finite, this implies that the stalk $(p_1)_* (\mathcal{I})_s = 0$, which implies that $(p_1)_*(\mathcal{I}) = 0$.
	
	Again, because $p_1$ is finite, then the natural map $0 = (p_1)^*(p_1)_*(\mathcal{I}) \rightarrow \mathcal{I}$ is surjective, which implies that $\mathcal{I} = 0$. This implies now that $O_{S \times_{\mathbb{C}^n/G} \mathbb{C}^n} \cong i_*(O_{\mathcal{Z}})$, and so $\mathcal{Z} = S \times_{\mathbb{C}^n/G} \mathbb{C}^n$.
\end{proof}

\begin{remark}
	The $G$-cluster that corresponds to the points on the boundary divisor of $\mathbb{C}^n/G$ is a non-reduced closed subscheme.
\end{remark}

\begin{theorem}\label{maxmain}
	Given a log pair of the quotient variety $(\mathbb{C}^2/D_{2n}, \hat{B})$ determined by the projection morphism $\pi: \mathbb{C}^2 \rightarrow \mathbb{C}^2/D_{2n}$ via the relation $K_{\mathbb{C}^2} = \pi^*(K_{\mathbb{C}^2/D_{2n}} + \hat{B})$ from the previous section, then the following hold for the blow-ups of the quotient variety $\mathbb{C}^2/D_{2n}$:
	\begin{enumerate}
		\item The maximal resolution for $(\mathbb{C}^2/D_{2n}, \hat{B})$ is obtained after $m = \frac{n-1}{2}$ (for odd case) and $m = \frac{n}{2}$ (for even case) blowing ups with singular points of the boundary divisors as the center which satisfies the inequality in Definition \ref{maxdef}.
		\item For each resolution $\tilde{Y} \rightarrow \mathbb{C}^2/D_{2n}$ dominated by the maximal resolution $Y_{max}$, there is a generic $\theta$ in the parameter space of $G$-constellations such that $\tilde{Y} \cong \mathcal{M}_{\theta}$.  
	\end{enumerate}
\end{theorem}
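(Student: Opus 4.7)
The plan is to separate the two assertions and to reduce both to Theorem \ref{maxemb}, which already identifies the maximal resolution $Y_{max}$ with $Y_1 = \mathbb{Z}_n\operatorname{-Hilb}(\mathbb{C}^2)/\mathbb{Z}_2$.

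For (1), I would locate the singular points of the boundary divisors using the explicit equations (\ref{eqb1}): in the odd case $\hat{B}_3$ has a cusp-type singularity at the origin, and in the even case $\hat{B}_1$ and $\hat{B}_2$ each pass singularly through the origin. I would then blow up iteratively at the singular locus of the strict transform of $\hat{B}$, computing at each step the discrepancy of the new exceptional divisor by the adjunction formula exactly as in Remark \ref{oddboundary}; the computation yields a discrepancy lying in $(-1, 0]$, preserving the inequality in Definition \ref{maxdef}. A direct count using the configurations in Figures \ref{config1} and \ref{config2} shows that after $m = \lfloor n/2 \rfloor$ such blow-ups the strict transform of $\hat{B}$ becomes smooth and the resulting surface is isomorphic to $Y_1$. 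Any further blow-up would then be centered on a smooth point of the divisor and would contribute a strictly positive discrepancy, confirming maximality.

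For (2), Corollary \ref{resev} tells us that every resolution $\tilde{Y} \to \mathbb{C}^2/D_{2n}$ dominated by $Y_{max}$ is a partial sequence of the blow-ups from (1), so it suffices to produce, for each such intermediate surface, a generic $\theta \in \Theta$ with $\tilde{Y} \cong \mathcal{M}_\theta$. My approach is the one announced at the start of Section 3: embed $D_{2n}$ into $\operatorname{SL}(3, \mathbb{C})$ by letting it act on an extra coordinate $z$ via the inverse determinant character, so that $\{z = 0\}/D_{2n}$ is a closed subvariety of $\mathbb{C}^3/D_{2n}$ naturally isomorphic to $\mathbb{C}^2/D_{2n}$. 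By the theorem of Craw--Ishii and Yamagishi quoted in Section 2, every projective crepant resolution $Z$ of $\mathbb{C}^3/D_{2n}$ arises as $\mathcal{M}_C$ for a GIT chamber $C \subset \Theta$, and adjacent chambers are related by flops via \cite{nollasekiya}. For each prescribed $\tilde{Y}$ I would construct, on an affine open cover provided by \cite{nollasekiya}, a crepant resolution $Z$ of $\mathbb{C}^3/D_{2n}$ whose strict transform of $\{z = 0\}/D_{2n}$ recovers $\tilde{Y}$, and take $\theta$ to be any point of the corresponding chamber $C$; note that $\Theta$ depends only on the representations of $D_{2n}$, so the same $\theta$ simultaneously serves as a stability condition for 2D constellations.

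The main obstacle I expect is the transfer of $\theta$-stability between the 3D and 2D settings in a chamber-compatible way: I must show that restriction to $\{z = 0\}$ sends a $\theta$-stable 3D $D_{2n}$-constellation supported there to a $\theta$-stable 2D constellation, and conversely that every 2D $\theta$-stable constellation lifts uniquely. Working on the affine charts from \cite{nollasekiya}, one then obtains a morphism $\tilde{Y} \to \mathcal{M}_\theta$ from the restricted universal family, and I would check chart-by-chart that this map is a bijection on closed points; combined with the fact that both source and target are smooth resolutions of $\mathbb{C}^2/D_{2n}$, Zariski's Main Theorem will upgrade this to the desired isomorphism.
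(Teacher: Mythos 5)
Your proposal follows essentially the same route as the paper: part (1) is the discrepancy computation already carried out in Theorem \ref{maxemb} and Remark \ref{oddboundary} together with Corollary \ref{resev}, and part (2) is the embedding of $D_{2n}$ into $\operatorname{SL}(3,\mathbb{C})$ combined with the flop machinery of \cite{nollasekiya}. The only real difference is organizational: where you propose to build, for each $\tilde{Y}$, a threefold crepant resolution containing it, the paper instead runs the chain of $m$ flops from the iterated Hilbert scheme $X_{0\ldots(m-1)}$ (which contains $Y_{max}$) down to $D_{2n}\operatorname{-Hilb}(\mathbb{C}^3)$ (which contains $\mathbb{C}^2/D_{2n}$) and uses a counting argument --- each flop can change the number of exceptional curves on the embedded surface by at most one, and after $m$ flops that number must drop from $m$ to $0$, so each flop contracts exactly one $(-1)$-curve --- which produces all intermediate resolutions at once and settles the existence question you leave open; the stability transfer you flag is handled by citing Theorem 6.4 of \cite{nollasekiya}, the underlying point being that a two-dimensional $G$-constellation is exactly a three-dimensional one on which $z$ acts by zero, so $\mathcal{M}_{\theta}(\mathbb{C}^2)$ sits inside $\mathcal{M}_{\theta}(\mathbb{C}^3)$ as a closed subscheme and no lifting argument is needed.
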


\begin{proof}
	(1) Because $X_1$ has $n-1$ exceptional divisors, $Y_1$ has $\frac{n-1}{2}$ (for $n$ odd) or $\frac{n}{2}$ (for even $n$) exceptional divisors. The surface $Y_1$ being the maximal resolution follows from Theorem \ref{maxemb}. And the contraction of exceptional divisors yielding a smooth resolution follows from Corollary \ref{resev}.
	
	(2) Because $(Y_1)_{max} \cong Y_1 \cong Y$, we can look instead at the iterated Hilbert scheme and naturally embed $Y$ into $\mathbb{Z}_2\operatorname{-Hilb}(\mathbb{Z}_n\operatorname{-Hilb}(\mathbb{C}^3))$.
	
	We embed the group $D_{2n} \subset \operatorname{GL}(2) \hookrightarrow \operatorname{SL}(3)$ via taking the determinant so that $\operatorname{SL}(3) \supset D_{2n} =\left\langle\begin{bmatrix}
		0 & 1 & 0\\
		1 & 0 & 0\\
		0 & 0 & -1
	\end{bmatrix}, \sigma := \begin{bmatrix}
		\epsilon & 0 & 0\\
		0 & \epsilon^{-1} & 0\\
		0 & 0 & 1
	\end{bmatrix} \right\rangle$, so that it induces the group action of $D_{2n}$ on $\mathbb{C}^3$ by matrix multiplication as well). This iterated Hilbert scheme $\mathbb{Z}_2\operatorname{-Hilb}(\mathbb{Z}_n\operatorname{-Hilb}(\mathbb{C}^3))$ is identified with $X_{0...(m-1)}$ via Theorems 5.1 and 5.2 of \cite{nollasekiya} and Example 6.1 of \cite{ishiiitonolla}. The quotient variety $\mathbb{C}^2/D_{2n}$ can be realized as a subscheme of $D_{2n}\operatorname{-Hilb}(\mathbb{C}^3)$.
	
	We refer to the table of the normal bundles of the exceptional divisors (and their flops) $\mathcal{N}_{X/E}$ with their corresponding open covers in the same Theorems 5.1 and 5.2 of \cite{nollasekiya} to see which open sets cover the floppable $(-1,-1)$ curve.
	
	From the notation in \cite{nollasekiya}, under a suitable generic parameter $\theta^i$ satisfying the inequalities of Theorem 6.4 of \cite{nollasekiya}, we define $Y_i' := \mathcal{M}_{\theta^i}(\mathbb{C}^2)$ realized as moduli space of $G$-constellations of $\mathbb{C}^2$ which can be embedded in $X_{0...i} := \mathcal{M}_{\theta^i}(\mathbb{C}^3)$ realized as moduli space of $G$-constellations of $\mathbb{C}^3$.
	
	\begin{tikzcd}
		Y_{m-1}' = Y_{max} = \mathbb{Z}_2\operatorname{-Hilb}(X_1) \arrow[hookrightarrow]{r} \arrow[dashed, "T_{m-1}"]{d} & X_{0...(m-1)} = \mathbb{Z}_2\operatorname{-Hilb}(\mathbb{Z}_n\operatorname{-Hilb}(\mathbb{C}^3)) \arrow[dashed]{d}\\
		Y_{m-2}'  \arrow[hookrightarrow]{r} \arrow[dashed, "T_{m-2}"]{d} & X_{0...(m-2)} \arrow[dashed]{d}\\
		\vdots \arrow[dashed, "T_1"]{d} & \vdots \arrow[dashed]{d}\\
		Y_0' \arrow[hookrightarrow]{r} \arrow[dashed, "T_0"]{d} & X_{0...0} \arrow[dashed]{d} \\
		\mathbb{C}^2/G \cong G\operatorname{-Hilb}(\mathbb{C}^2) \arrow[hookrightarrow]{r} & G\operatorname{-Hilb}(\mathbb{C}^3)\\
	\end{tikzcd}
	
	The number of flops from the iterated Hilbert scheme $X_{0...(m-1)}$ to the $G$-Hilbert scheme $G\operatorname{-Hilb}(\mathbb{C}^3)$ is the same as the number of exceptional divisors of the morphism $f_2: Y_{max} \rightarrow \mathbb{C}^2/G$.
	
	After each flop of $(-1,-1)$ curve, the number of exceptional divisors over the surface must either: (a) increase by one, (b) decrease by one, or (c) stays the same.
	
	Furthermore, after $m$ flops from $X_{0...(m-1)}$, we eventually reach the Hilbert scheme $G\operatorname{-Hilb}(\mathbb{C}^3)$, whose image of $Y_{max}$ must be $G\operatorname{-Hilb}(\mathbb{C}^2) \cong \mathbb{C}^2/G$. Thus, all of the birational transformations over the surfaces must decrease the number of exceptional curves by one.
	
	We examine the flop restricted to the surface, most especially the open cover containing the exceptional divisor $E_i$, in Figure \ref{flopflop}.
	
	\begin{figure}
		\begin{center}
			\includegraphics[scale=0.40]{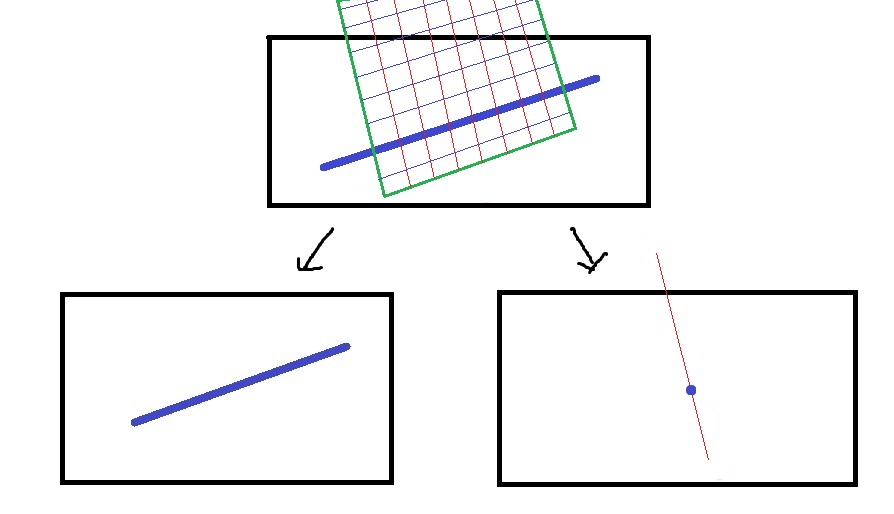}
			\caption{The process of flop in relation to the surface containing the exceptional curve}\label{flopflop}
		\end{center}
	\end{figure}
	
	Because the surface contains the $(-1,-1)$ flopping curve over the threefold, blowing up with the flopping curve as the center in the threefold, over the surface, the exceptional curve remains the same. Over the threefold, this produces the $\mathbb{P}^1 \times \mathbb{P}^1$ exceptional surface. Contracting in the other direction, this contracts the exceptional divisor on the surface.
	
	Because each of the surfaces $Y_i'$ are smooth, this now implies that each of the broken arrows over the two-dimensional variants are blowdown morphisms of a $(-1)$-curve, and comparing with Corollary \ref{resev}, every resolution dominated by $Y_{max}$ can be realized as a moduli space of $G$-constellations, which proves the main theorem.
\end{proof}

\begin{remark}\label{flopcoordinate}
	We fix the following notations for the open sets for odd $n$:
	\begin{align*}
		U_1' &\cong \operatorname{Spec}(\mathbb{C}[\frac{z}{f_2}, f_1, xy])\\
		U_i' &\cong \operatorname{Spec}\left(\mathbb{C}\left[\frac{(xy)^{i-1}z}{f_2}, \frac{f_1}{(xy)^{i-1}}, xy\right]\right) \mbox{ for } i \leq m + 1 \\ 
		U_i &\cong \operatorname{Spec}\left(\mathbb{C}\left[\frac{(xy)^{i-1}z}{f_2}, \frac{f_2}{(xy)^{i-2}z}, \frac{zf_1}{f_2}\right]\right), \mbox{ for } i \leq m + 1\\
		U_i'' &\cong \operatorname{Spec}\left(\mathbb{C}\left[\frac{zf_1}{f_2}, \frac{(xy)^i}{f_1}, \frac{f_1}{(xy)^{i-1}}\right]\right) \mbox{ for } i \leq m \\
		U_{m+2} &\cong \operatorname{Spec}\left(\mathbb{C}\left[z^2, \frac{f_1}{(xy)^m}, \frac{f_2}{(xy)^m z}\right]\right)\\
		X_{0...i} &= \bigcup_{k=1}^{i+1} U_k'' \cup U_{i+2}' \cup \bigcup_{k=i+3}^{m+2} U_k   
	\end{align*}
	We fix the notations also for the open sets for even $n$:
	\begin{align*}
		U_i &\cong \operatorname{Spec}\left(\mathbb{C}\left[\frac{(xy)^{i-1}z}{f_1f_2}, \frac{f_1f_2}{(xy)^{i-2}z}, \frac{zf_1}{f_2}\right]\right), \mbox{ for } i \leq m\\
		U_{m+1} &\cong \operatorname{Spec}\left(\mathbb{C}\left[\frac{zf_2}{f_1}, \frac{f_1f_2}{(xy)^{m-1}z}, \frac{zf_1}{f_2}\right]\right)\\
		U_i' &\cong \operatorname{Spec}\left(\mathbb{C}\left[\frac{(xy)^{i-1}z}{f_1f_2}, \frac{f_1^2}{(xy)^{i-1}}, xy\right]\right) \mbox{ for } i \leq m\\
		U_{m+1}' &\cong \operatorname{Spec}\left(\mathbb{C}\left[\frac{zf_2}{f_1}, \frac{f_1^2}{f_2^2}, \frac{f_2^2}{(xy)^{m-1}}\right]\right)\\
		U_i'' &\cong \operatorname{Spec}\left(\mathbb{C}\left[\frac{zf_1}{f_2}, \frac{(xy)^i}{f_1^2}, \frac{f_1^2}{(xy)^{i-1}}\right]\right) \mbox{ for } i \leq m-1
	\end{align*}
	\begin{align*}
		U_m'' &\cong \operatorname{Spec}\left(\mathbb{C}\left[\frac{zf_1}{f_2}, \frac{f_2^2}{f_1^2}, \frac{f_1^2}{(xy)^{m-1}}\right]\right)\\
		V_i' &\cong \operatorname{Spec}\left(\mathbb{C}\left[\frac{(xy)^{i-2}z^2}{f_2^2}, xy, \frac{f_1f_2}{(xy)^{i-2}z}\right]\right) \mbox{ for } i \leq m\\
		V_{m+1}' &\cong \operatorname{Spec}\left(\mathbb{C}\left[\frac{(xy)^{m-1}z^2}{f_2^2}, \frac{f_2^2}{(xy)^{m-1}}, \frac{f_1f_2}{(xy)^{m-1}z}\right]\right)\\
		V_{m+2}' &\cong \operatorname{Spec}\left(\mathbb{C}\left[z^2, \frac{f_2^2}{(xy)^{m-1}}, \frac{f_1}{zf_2}\right]\right)\\
		V_i'' &\cong \operatorname{Spec}\left(\mathbb{C}\left[\frac{(xy)^{i-2}z^2}{f_2^2}, \frac{f_2^2}{(xy)^{i-3}z^2}, \frac{zf_1}{f_2}\right]\right) \mbox{ for } i \leq m + 1\\  
		V_{m+2}'' &\cong \operatorname{Spec}\left(\mathbb{C}\left[z^2, \frac{f_2^2}{(xy)^{m-1}z^2}, \frac{zf_1}{f_2}\right]\right)\\
		V_{m+3}'' &\cong \operatorname{Spec}\left(\mathbb{C}\left[z^2, \frac{f_1^2}{(xy)^{m-1}}, \frac{f_2}{zf_1}\right]\right)\\
		X_{0...i}^{m...(m-j)} &= \bigcup_{k=1}^{i+1} U_k'' \cup U_{i+2}' \cup \bigcup_{k=i+3}^{m-j} U_k \cup V_{m-j+1}' \cup \bigcup_{k=m-j+2}^{m+3} V_k''
	\end{align*}
	
	An explicit way to approach the theorem is to consider also the open sets so that $U_m'' \cup U_{m+1}'$ cover the exceptional divisor with coordinates \[ E_m : \begin{cases} 
		(f_1 : (xy)^m) & \mbox{for odd $n$} \\
		(f_1^2 : (xy)^m) & \mbox{for even $n$}
	\end{cases}
	\]
	
	From the proof of Theorems 5.1 and 5.2 of \cite{nollasekiya}, under the flopping transformation of the exceptional divisor $E_m$, the open sets $U_m'' \cup U_{m+1}'$ maps to the open sets $U_m' \cup U_{m+1}$ covering the flop of $E_m$.
	The coordinate of the flop of $E_m$ is \[ \hat{E}_m : \begin{cases} 
		((xy)^{m-1} z : f_2) & \mbox{for odd $n$} \\
		((xy)^{m-1} z : f_1f_2) & \mbox{for even $n$}
	\end{cases}
	\]
	so that on the new variety $X_{0...(m-2)}$, the exceptional divisor vanishes (or is contracted in the two dimensional case).
	
	We also note the gluing between the open sets $U_m''$ and $U_{m+1}'$ via: \begin{align*} 
		(\frac{zf_1}{f_2}, (xy)^m/f_1, f_1/(xy)^{m-1}) &\mapsto (\frac{zf_1}{f_2} \cdot \frac{(xy)^m}{f_1}, (\frac{(xy)^m}{f_1})^{-1}, \frac{(xy)^m}{f_1} \cdot \frac{f_1}{(xy)^{m-1}}) \\
		& \mbox{(for odd $n$)} \\
		(\frac{zf_1}{f_2}, \frac{f_1^2}{(xy)^{m-1}}, \frac{f_2^2}{f_1^2}) &\mapsto (\frac{zf_1}{f_2} \cdot \frac{f_2^2}{f_1^2}, (\frac{f_2^2}{f_1^2})^{-1}, \frac{f_1^2}{(xy)^{m-1}} \cdot \frac{f_2^2}{f_1^2})\\
		& \mbox{(for even $n$)}
	\end{align*}
	
	Thus, with the realization of the iterated Hilbert scheme over the surface as the locus from earlier, the loci $zf_1/f_2 = 0$ and $(xy)^mz/f_2 = 0$ collapses the open set $U_{m+1}$. The open cover implies that this is identical to the blow-down of the exceptional curve $\hat{E}_m$. In symbols, we have the commutative diagram:
	
	\begin{tikzcd}
		X_{0...(m-2)} \supset U_m' \cup U_{m+1} \supset \hat{E}_m & \arrow[dashed,l, "\Psi_{m-1}"] E_m \subset U_m'' \cup U_{m+1} \subset X_{0...(m-1)}\\
		Y_{m-2}' \supset T_{m-1}(E_m) = pt \arrow[u, "i"] & \arrow[l, "T_{m-1}"] E_m \subset Y_{max}\arrow[u, "i"]
	\end{tikzcd}
	
	From the flop of $E_m$, we obtain another crepant resolution. This time, we do this similarly for the open sets $U_{i}'' \cup U_{i+1}'$ for $1 \leq i \leq m-1$, which gives another crepant resolution. This works for both cases.
\end{remark}

\section{The Tautological Bundles and the McKay Correspondence}
In the next sections, we explicitly construct a McKay correspondence by investigating the exceptional divisors of the maximal resolution $Y_{max} \rightarrow \mathbb{C}^2/D_{2n}$ using the realization of the maximal resolution as a moduli space of $D_{2n}$- constellations.

In this section, in the spirit of \cite{gonzverd}, we consider the tautological bundles and consider the stacky descriptions (or equivalently their $\mathbb{Z}_2$-equivariant sheaves) to construct the McKay correspondence for the dihedral groups. The main result in this section is the description parallel to the proof of Theorem 1.11 in \cite{artverd}, in particular the description of a rank $n$ indecomposable reflexive module as an extension of two vector bundles (one of which is a line bundle) of the form $0 \rightarrow O_{\tilde{X}}^{\oplus (n-1)} \rightarrow \tilde{M} \rightarrow O_{\tilde{X}}(\tilde{D}) \rightarrow 0$, whose Chern class $c_1(\tilde{M})$ of the locally free sheaf $\tilde{M}$ corresponds to a vertex of the Dynkin diagram associated to $\tilde{X}$.

We also insert a comparison between the tautological sheaves on the stack and the tautological sheaves on the coarse moduli space to show how they behave differently in those spaces. Together with the results on the final section, we show evidences why constructing a McKay correspondence is more plausible over the stack than on the coarse moduli space.

We consider the representations of $\mathbb{Z}_n$ and $D_{2n}$ as described by the following character tables \ref{tab:my_label1} and \ref{tab:my_label2}. There are four one-dimensional representations $\rho_0, \rho_0', \rho_{n/2}, \mbox{and } \rho_{n/2}'$. For $1 \leq j \leq n/2$ (for even $n$) or $1 \leq j \leq (n-1)/2$ (for odd $n$), the two-dimensional representations $\rho_j$ are defined by their corresponding matrix representations: $\rho_i = \left\langle \tau :=  \begin{bmatrix}
	0 & 1 \\
	1 & 0
\end{bmatrix}, \sigma^i := \begin{bmatrix}
	\epsilon^i & 0 \\
	0 & \epsilon^{-i}
\end{bmatrix} (\epsilon^n = 1)  \right\rangle$.

\begin{table}[H]
	\centering
	\caption{The Irreducible Representations of $\mathbb{Z}_n$}
	\label{tab:my_label1}
	\begin{tabular}{c|c|c}
		\backslashbox{\mbox{Representation}}{\mbox{Conjugacy Class}} & $1$ & $\sigma^i$ \\\hline
		$\epsilon_0$ & $1$ & $1$\\
		$\epsilon_j$ & $1$ & $\epsilon^{ij}$
	\end{tabular}
	
	where $1 \leq j \leq n-1$ and $0 \leq i \leq n-1$ are integers.
\end{table}
\begin{table}[H]
	\centering
	\caption{The Irreducible Representations of $D_{2n}$}
	\label{tab:my_label2}
	\begin{tabular}{c|c|c|c}
		\backslashbox{\mbox{Representation}}{\mbox{Conjugacy Class}} & $1$ & $\tau$ & $\sigma^i$ \\\hline
		$\rho_0$ & $1$ & $1$ & $1$\\
		$\rho_0'$ & $1$ & $-1$ & $1$\\
		$\rho_j$ & $2$ & $0$ & $\epsilon^{ij} + \epsilon^{-ij}$\\
		$\rho_{n/2}$ \mbox{($n$ even)} & $1$ & $1$ & $(-1)^i$ \\
		$\rho_{n/2}'$ \mbox{($n$ even)} & $1$ & $-1$ & $(-1)^i$
	\end{tabular}
	
	where $1 \leq j \leq \frac{n-1}{2}$ and $0 \leq i \leq n-1$ are integers.
\end{table}

Consider the diagram:
\[
\begin{tikzcd}
	& \mathcal{Z} \subset X_1 \times \mathbb{C}^2 \arrow[dl, "p_{X_1}"] \arrow[dr,"p_{\mathbb{C}^2}"] \\
	X_1 && \mathbb{C}^2
\end{tikzcd}
\]

where $\mathcal{Z} \subset X_1 \times \mathbb{C}^2$ is the universal subscheme and $p_{X_1}$ and $p_{\mathbb{C}^2}$ are natural projections.

Since the group $D_{2n}$ acts on each of the schemes $X_1$ and $\mathbb{C}^2$, by defining $[X/G]$ as the (quotient) stack associated to the scheme $X/G$, we can construct the diagram:

\[
\begin{tikzcd}
	& \left[\mathcal{Z}/D_{2n}\right] \subset \left[(X_1 \times \mathbb{C}^2) / D_{2n} \right] \arrow[dl, "p_{\left[X_1/ D_{2n}\right]}"] \arrow[dr,"p_{\left[\mathbb{C}^2/ D_{2n}\right]}"] \\
	\left[X_1/ D_{2n}\right] && \left[\mathbb{C}^2 / D_{2n} \right]
\end{tikzcd}
\]

Because $\mathbb{Z}_n$ acts trivially on $X_1$, there is a natural morphism $\phi: \left[X_1/D_{2n} \right] \rightarrow \left[X_1/\mathbb{Z}_2 \right]$, so that the pushforward morphism $\phi_*$ sends a $D_{2n}$-equivariant coherent sheaf $\mathcal{F}$ on $X_1$ to the $\mathbb{Z}_2$-equivariant coherent sheaf $\mathcal{F}^{\mathbb{Z}_n}$.

We also use the fact that the category of (quasi-)coherent sheaves on the quotient stack $[X/G]$ is equivalent to the category of $G$-equivariant (quasi-)coherent sheaves on the scheme $X$, i.e. $(Q)Coh([X/G]) \cong (Q)Coh^G(X)$.

By Theorem \ref{maxemb}, the maximal resolution and the quotient variety $Y_1$ are identical. This means that the corresponding boundary divisors are identical as well. This means that by considering the 2nd root stack $$\sqrt{(O_{\mathbb{Z}_n\operatorname{-Hilb}(\mathbb{C}^2)/\mathbb{Z}_2}(\ceil*{B'}), 1_{\ceil*{B'}})/(\mathbb{Z}_n\operatorname{-Hilb}(\mathbb{C}^2)/\mathbb{Z}_2)},$$ we obtain the isomorphism between stacks $\mathcal{Y} \cong [\mathbb{Z}_n\operatorname{-Hilb}(\mathbb{C}^2)/\mathbb{Z}_2]$.  

Defining $\mathcal{Z} \subset X_1 \times \mathbb{C}^2$ as the universal family of $\mathbb{Z}_n$-constellations (or clusters) and consider the diagram whose $p_{[X_1/D_{2n}]}$ and $p_{[\mathbb{C}^2/D_{2n}]}$ are natural projections: \[
\begin{tikzcd}
	& \left[\mathcal{Z}/D_{2n}\right] \subset \left[(X_1 \times \mathbb{C}^2) / D_{2n} \right] \arrow[dl, "p_{\left[X_1/ D_{2n}\right]}"] \arrow[dr,"p_{\left[\mathbb{C}^2/ D_{2n}\right]}"] \\
	\left[X_1/ D_{2n}\right] && \left[\mathbb{C}^2 / D_{2n} \right]
\end{tikzcd}
\]

We have the Fourier-Mukai transforms:
\begin{align*}
	\Phi: D([\mathbb{C}^2/D_{2n}]) &\rightarrow D(\mathcal{Y}) \\
	\mathfrak{G} &\mapsto \phi_*(Rp_{[\mathbb{Z}_n\operatorname{-Hilb}(\mathbb{C}^2)/D_{2n}]*} (p_{[\mathbb{C}^2/D_{2n}]}^*(\mathfrak{G}) \otimes O_{[\mathcal{Z}/D_{2n}]}))\\
	\Psi: D(\mathcal{Y}) &\rightarrow D([\mathbb{C}^2/D_{2n}]) \\
	\epsilon &\mapsto R(p_{[\mathbb{C}^2/D_{2n}]*}) (p_{[\mathbb{Z}_n\operatorname{-Hilb}(\mathbb{C}^2)/D_{2n}]}^*(\phi^*(\epsilon)) \otimes det(\rho_{nat}) \\
	& \otimes O_{[\mathcal{Z}/D_{2n}]}^{\lor}[2])
\end{align*}

The functors $\Psi$ and $\Phi$ are equivalences via Theorem 4.1 of \cite{ishiiueda}.

We define the tautological sheaf associated to a representation $\rho$ of $D_{2n}$ as $$\hat{\mathcal{R}}_{\rho} := \Phi(O_{\mathbb{C}^2} \otimes \rho^{\lor}) = (p_{\mathbb{Z}_n\operatorname{-Hilb}(\mathbb{C}^2)*}(O_{\mathcal{Z}}) \otimes \rho^{\lor})^{\mathbb{Z}_n} = \left( [\oplus_{\epsilon \in \operatorname{Irr}(\mathbb{Z}_n)} \mathcal{R}_{\epsilon} \otimes \epsilon] \otimes \rho^{\lor} \right)^{\mathbb{Z}_n},$$
which are $\mathbb{Z}_2$-equivariant locally free sheaves since $\mathcal{R}_{\epsilon}$ are locally free sheaves, and the $\mathbb{Z}_2$-sheaf structure came from the induced representation from $\mathbb{Z}_n$ to $D_{2n}$.

We consider also the diagram:
\[
\begin{tikzcd}
	& \mathcal{Y} \arrow[dr,"\pi"] \\
	\mathbb{Z}_n\operatorname{-Hilb}(\mathbb{C}^2) \arrow[ur,"f"] \arrow[rr,"p"] && \mathbb{Z}_n\operatorname{-Hilb}(\mathbb{C}^2)/\mathbb{Z}_2
\end{tikzcd}
\]

to define the corresponding tautological sheaves on $\mathcal{M}_{\theta} = \mathbb{Z}_n\operatorname{-Hilb}(\mathbb{C}^2)/\mathbb{Z}_2$, for some $\theta \in \Theta$.

The tautological bundle $\mathcal{R} := p_{X_1*}(O_{\mathcal{Z}})$ on $\mathbb{Z}_n\operatorname{-Hilb}(\mathbb{C}^2)$ decomposes as:

$$\mathcal{R} = \bigoplus_{\epsilon \in \operatorname{Irr}(\mathbb{Z}_n)} \mathcal{R}_{\epsilon}^{\circ} \otimes \epsilon.$$

By considering $\mathcal{R}_{\epsilon}^{\circ} := (\mathcal{R} \otimes \epsilon^{\lor})^{\mathbb{Z}_n}$ and $\mathcal{R}_{\rho}^{\circ} := (p_*(\mathcal{R}) \otimes \rho^{\lor})^{D_{2n}}$ as subsheaves of $K(\mathbb{C}^2) \otimes \epsilon_i^{\lor}$ and $K(\mathbb{C}^2) \otimes \rho_i^{\lor}$, respectively, we have the following images of the tautological bundles on $\mathbb{Z}_n\operatorname{-Hilb}(\mathbb{C}^2)$ under the pushforward $p_*$:
\begin{align*}
	p_*(\mathcal{R}_{\epsilon_i} \otimes \epsilon_i \oplus \mathcal{R}_{\epsilon_{n-i}} \otimes \epsilon_{n-i}) &= (\mathcal{R}_{\rho_i} \otimes \rho_i)^{\oplus 2} \mbox{ } \mbox{ (for $i \neq n/2$)}\\
	p_*(\mathcal{R}_{\epsilon_0} \otimes \epsilon_0) &= \mathcal{R}_{\rho_0} \otimes \rho_0 \oplus \mathcal{R}_{\rho_0'} \otimes \rho_0'\\
	p_*(\mathcal{R}_{\epsilon_{n/2}} \otimes \epsilon_{n/2}) &= \mathcal{R}_{\rho_{n/2}} \otimes \rho_{n/2} \oplus \mathcal{R}_{\rho_{n/2}'} \otimes \rho_{n/2}'
\end{align*}

We collect some lemmas in order to establish a parallel statement of the correspondence:

\begin{lemma}\label{finp}
	For a (Cartier) divisor $D$ on $X_1$, the followings hold:
	\begin{enumerate}
		\item $\operatorname{det}(p_*(O_{X_1}(D)) = \operatorname{det}(p_*(O_{X_1})) \otimes O_{Y_1}(p_*(D))$.
		\item $\operatorname{det}(p_*(O_{X_1})) = O_{Y_1}(L)$, where $L$ is a $\mathbb{Q}$-divisor satisfying $2L = -(B_1 + B_2)$ (for even $n$) and $2L = -B_3$ (for odd $n$). Please refer again to (\ref{eqb1}) and (\ref{eqb2}) for the definition of the ramification divisors. This shows that the sheaf $O_{Y_1}(L)$ is a line bundle.
		\item $p_*(O_{X_1}) = O_{Y_1} \oplus O_{Y_1}(L)$.
	\end{enumerate}
\end{lemma}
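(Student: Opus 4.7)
The plan is to prove the three statements in the order $(3) \Rightarrow (2) \Rightarrow (1)$: part (3) gives the structural decomposition, part (2) identifies the anti-invariant summand via the branch locus, and part (1) then follows from a general determinant formula for finite flat pushforwards.

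For (3), since $p \colon X_1 \to Y_1$ is a finite flat morphism of degree $2$ (the $\mathbb{Z}_2$-quotient), the pushforward $p_*O_{X_1}$ is a rank-$2$ locally free $O_{Y_1}$-algebra carrying a $\mathbb{Z}_2$-action, and splits into $\pm 1$-eigensheaves $(p_*O_{X_1})^+ \oplus (p_*O_{X_1})^-$. The invariant part is $O_{Y_1}$ by the very definition of the quotient $Y_1 = X_1/\mathbb{Z}_2$, while the anti-invariant part is a rank-$1$ torsion-free sheaf on the smooth surface $Y_1$ (smoothness from Proposition \ref{smth}), hence a line bundle, which we denote $O_{Y_1}(L)$.

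For (2), multiplication in the $O_{Y_1}$-algebra $p_*O_{X_1}$ restricts to a map $O_{Y_1}(L) \otimes O_{Y_1}(L) \to O_{Y_1}$ (the product of two anti-invariant sections is invariant), and this map degenerates precisely on the branch divisor $B_{\mathrm{br}}$ of $p$. This yields $O_{Y_1}(2L) \cong O_{Y_1}(-B_{\mathrm{br}})$. From the chart-by-chart computation of $(X_1)^{\mathbb{Z}_2}$ already carried out in the proof of Proposition \ref{smth}, the branch divisor is identified as $B_3$ for odd $n$ and $B_1 + B_2$ for even $n$, yielding the stated formulas. For (1), I would invoke the standard identity
\[ \det(p_*O_{X_1}(D)) \cong \det(p_*O_{X_1}) \otimes \mathrm{Nm}_p(O_{X_1}(D)) \]
for finite flat morphisms, together with $\mathrm{Nm}_p(O_{X_1}(D)) = O_{Y_1}(p_*D)$; this can be derived from Grothendieck--Riemann--Roch applied to $p$ at the level of first Chern classes, or more elementarily by verifying it on the étale locus of $p$ via the projection formula and then extending across the codimension-one branch locus by reflexivity on the smooth surface $Y_1$.

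The main obstacle is part (2), specifically identifying the branch divisor of $p$ as a concrete divisor on $Y_1$ with the correct multiplicities. This is where the geometric content of the lemma resides, and it depends essentially on the chart-by-chart determination of which strict transforms $\tilde{B}_i$ make up the $\mathbb{Z}_2$-fixed locus of $X_1$, as already performed in Proposition \ref{smth}. Once that identification is in hand, parts (3) and (1) are either structural or formal consequences of the general determinant formula.
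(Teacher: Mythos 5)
Your proposal is correct, and for part (2) it takes a genuinely different route from the paper. The paper proves (1) first, by pushing forward the sequence $0 \to O_{X_1} \to O_{X_1}(D_1) \to O_{D_1}(D_1) \to 0$ and taking Chern classes; it then derives (2) from duality for the finite flat morphism $p$ (namely $\omega_{X_1} \cong p^{!}\omega_{Y_1}$, whence $p_*O_{X_1} \cong (p_*\omega_{X_1})^{\lor}\otimes\omega_{Y_1}$), combined with (1) applied to $D = K_{X_1}$ and the ramification relation $K_{X_1} = p^*(K_{Y_1} - L)$; part (3) is the eigensheaf decomposition, exactly as you set it up. Your replacement of the duality step by the standard structure theory of double covers --- the multiplication map $O_{Y_1}(L)\otimes O_{Y_1}(L)\to O_{Y_1}$ whose image is the ideal sheaf of the reduced branch divisor --- is sound and in one respect cleaner: the paper's computation only yields $\det(p_*O_{X_1})^{\otimes 2}\cong O_{Y_1}(2L)$ and then extracts the square root (harmless here because $\operatorname{Pic}(Y_1)$ is torsion-free, $Y_1$ being an iterated blow-up of $\mathbb{C}^2/G\cong\mathbb{C}^2$, but your multiplication map pins down $O_{Y_1}(L)$ without that extra observation). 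Both arguments rest on the same geometric input, namely the identification of the $\mathbb{Z}_2$-fixed locus of $X_1$ with the strict transforms $\tilde{B}_i$ carried out in Proposition \ref{smth}. For (1), your norm identity $\det(p_*O_{X_1}(D))\cong\det(p_*O_{X_1})\otimes\operatorname{Nm}_p(O_{X_1}(D))$ with $\operatorname{Nm}_p(O_{X_1}(D)) = O_{Y_1}(p_*D)$ is exactly the Hartshorne exercise the paper cites, so the two proofs coincide in substance there.

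One small repair: in (3) you justify that the anti-invariant summand is a line bundle because it is a rank-one torsion-free sheaf on a smooth surface; that is not sufficient (the ideal sheaf of a point is rank-one torsion-free but not invertible). The correct reason, already implicit in your first sentence, is that it is a direct summand of the locally free sheaf $p_*O_{X_1}$ (locally free by miracle flatness, $p$ being finite with $X_1$ Cohen--Macaulay and $Y_1$ regular by Proposition \ref{smth}), hence itself locally free of rank one.
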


\begin{proof}
	The statements (1) and (2) are essentially (Ch. IV.2, Ex. 2.6) of \cite{hartshorne1}, but we provide the complete proof. In the following, we distinguish $K_X$ as the canonical divisor, and $\omega_X = O_X(K_X)$ is the canonical sheaf.
	
	For an effective divisor $D_1$, consider the exact sequence: $$0 \rightarrow O_{X_1} \rightarrow O_{X_1}(D_1) \rightarrow O_{D_1}(D_1) \rightarrow 0.$$ Applying the push-forward $p_*$ gives the exact sequence:
	$$0 \rightarrow p_*(O_{X_1}) \rightarrow p_*(O_{X_1}(D_1)) \rightarrow p_*(O_{D_1}(D_1)) \rightarrow 0.$$
	Taking the Chern classes over the exact sequence gives statement (1) for the effective divisor $D_1$.
	
	If $D$ is not effective, we can write $D$ as $D = D_1 - D_2$, where both $D_1$ and $D_2$ are effective. We have shown the statement for the effective divisor $D_1$. For the divisor $-D_2$, we apply the similar sequence:
	$$0 \rightarrow O_{X_1}(-D_2) \rightarrow O_{X_1} \rightarrow O_{D_2} \rightarrow 0.$$
	
	Applying the push-forward $p_*$ and taking the Chern classes gives the statement (1) for any divisor $D = D_1 - D_2$.
	
	By the duality for a finite flat morphism (Ch. III.6, Ex. 6.19 of \cite{hartshorne1}), with $\omega_{Y_1}$ being the dualizing sheaf of $Y_1$, so that  $\omega_{X_1} \cong p^{!}(\omega_{Y_1}) := \reallywidetilde{\mathcal{H}om(p_*(O_{X_1}), \omega_{Y_1})}$ (where for a $p_*(O_{X_1})$-module $\mathcal{M}$, the sheaf $\reallywidetilde{\mathcal{M}}$ is the associated $O_{X_1}$-module, as seen in Ch. II.5, Ex. 5.17 of \cite{hartshorne1}), is also a dualizing sheaf for $X_1$, and we have the isomorphism:
	$$p_*(O_{X_1}) \cong p_*\mathcal{H}om_{X_1}(\omega_{X_1}, \omega_{X_1}) \cong \mathcal{H}om_{Y_1}(p_*(\omega_{X_1}), \omega_{Y_1}) \cong (p_*(\omega_{X_1}))^{\lor} \otimes \omega_{Y_1}.$$
	
	Taking the Chern classes gives the isomorphism:
	$$\operatorname{det}(p_*(\omega_{X_1})) \cong \operatorname{det}(p_*(O_{X_1}))^{-1} \otimes \omega_{Y_1}^{\otimes 2}.$$
	
	By plugging $D = K_{X_1}$ on statement (1), we compare different expressions for $\operatorname{det}(p_*(\omega_{X_1}))$:
	\begin{align*}
		\operatorname{det}(p_*(O_{X_1}))^{-1} \otimes \omega_{Y_1}^{\otimes 2} &\cong \operatorname{det}(p_*(O_{X_1})) \otimes O_{Y_1}(p_*(K_{X_1}))\\
		\operatorname{det}(p_*(O_{X_1}))^{\otimes 2} &\cong O_{Y_1}(2K_{Y_1} - p_*(K_{X_1})).
	\end{align*}
	
	From the relation $K_{X_1} = p^*(K_{Y_1} - L)$, where $-2L = B_1 + B_2$ (in the even $n$ case) and $-2L = B_3$ (in the odd $n$ case), taking the pushforward $p_*$, we get statement (2) thereafter:
	\begin{align*}
		p_*(K_{X_1}) &= 2K_{Y_1} - 2L\\
		\operatorname{det}(p_*(O_{X_1}))^{\otimes 2} &\cong O_{Y_1}(2L).
	\end{align*}
	Because $\mathbb{Z}_2$ acts trivially on $Y_1$ and $p$ is a finite flat morphism, $p_*(O_{X_1})$ decomposes into a $\mathbb{Z}_2$-invariant locally free sheaf and $\mathbb{Z}_2$-anti-invariant locally free sheaf; or more precisely:
	$$p_*(O_{X_1}) = \mathcal{F}_0  \oplus \mathcal{F}_1 \otimes \delta,$$
	where $\delta$ is the nontrivial representation of $\mathbb{Z}_2$.
	Certainly, $\mathcal{F}_0 = O_{Y_1}$, and it follows that $\mathcal{F}_1 = \operatorname{det}(p_*(O_{X_1})) = O_{Y_1}(L)$ showing (3).
\end{proof}

\begin{lemma}\label{refdivisor}
	There exists a (Weil) divisor $D$ on the maximal resolution $Y_{max}$ such that:
	\begin{enumerate}
		\item $D$ is transversal to the exceptional divisor $E_k$; $D \cdot E_k = 1$, for some $1 \leq k \leq m$.
		\item $D \cdot E_j = 0, j \neq k$.
		\item For even $n$, $D$ does not coincide with either of $B_1$ or $B_2$; $D \neq B_1, B_2$.
	\end{enumerate}
\end{lemma}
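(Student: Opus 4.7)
The plan is to construct $D$ by exhibiting an explicit prime divisor $\tilde{D}_k$ on $X_1 = \mathbb{Z}_n\operatorname{-Hilb}(\mathbb{C}^2)$ and pushing it forward through the quotient map $\tau_2': X_1 \to Y_1 \cong Y_{max}$. On the affine chart $U_k = \operatorname{Spec} \mathbb{C}[u_k, v_k]$ with $u_k = x^k/y^{n-k}$ and $v_k = y^{n-k+1}/x^{k-1}$, the exceptional divisor $\tilde{E}_k$ is cut out by $v_k = 0$. I would take $\tilde{D}_k$ to be the Zariski closure in $X_1$ of $\{u_k = c\}$ for a generic nonzero $c \in \mathbb{C}$; this is irreducible and meets $\tilde{E}_k$ transversally at the single point $(c, 0) \in U_k$.

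Using the transition formulas $u_{i+1} = u_i^2 v_i$ and $v_{i+1} = 1/u_i$, the defining equation of $\tilde{D}_k$ on each neighboring chart $U_j$ becomes a Laurent monomial $u_j^a v_j^b = \operatorname{const}$ with nonzero constant, hence does not vanish on $\tilde{E}_{j-1} \cup \tilde{E}_j$; together with the explicit equations (\ref{eqb1}) of the boundary strict transforms $\tilde{B}_i$, a generic choice of $c$ also separates $\tilde{D}_k$ from each $\tilde{B}_i$. Set $D := \tau_2'(\tilde{D}_k)$. Because the $\mathbb{Z}_2$-action $(x, y) \mapsto (y, x)$ sends $\tilde{D}_k$ to the distinct divisor $\{u_{n-k} = 1/c\}$, the divisor $\tilde{D}_k$ is not $\mathbb{Z}_2$-invariant, $\tau_2'|_{\tilde{D}_k}$ is birational onto its image, and $(\tau_2')_*\tilde{D}_k = D$.

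The intersection numbers now follow from the projection formula. For $j \neq n/2$ one has $(\tau_2')^* E_j = \tilde{E}_j + \tilde{E}_{n-j}$, so
\[
D \cdot E_j \;=\; \tilde{D}_k \cdot (\tilde{E}_j + \tilde{E}_{n-j}) \;=\; \delta_{jk}.
\]
For even $n$ and $j = n/2$, the same identity goes through once one shows $(\tau_2')^* E_{n/2} = \tilde{E}_{n/2}$ with multiplicity one. Transversality of $D$ to $E_k$ is inherited from that of $\tilde{D}_k$ to $\tilde{E}_k$ because $\tau_2'$ is étale at the chosen point, and the condition $D \neq B_1, B_2$ follows from the genericity of $c$, which places the intersection $D \cap E_k$ off the image of $\tilde{B}_1 \cup \tilde{B}_2$.

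The main obstacle is the multiplicity computation for $(\tau_2')^* E_{n/2}$ in the even $n$ case, since $\tilde{E}_{n/2}$ is set-theoretically fixed by the $\mathbb{Z}_2$-action and one might a priori expect a factor of two as in a ramified cover. This potential factor is ruled out by the pseudoreflection analysis in Proposition \ref{smth}, which shows that the ramification locus of $\tau_2'$ is supported on $\tilde{B}_1 \cup \tilde{B}_2$ and in particular does not contain the generic point of $\tilde{E}_{n/2}$; hence $(\tau_2')^* E_{n/2} = \tilde{E}_{n/2}$ with multiplicity one, and the projection formula yields $D \cdot E_{n/2} = 1$ in the remaining case $k = n/2$.
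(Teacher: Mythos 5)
Your construction is correct, but it takes a genuinely different route from the paper. The paper builds $D$ directly downstairs: it takes the locus $W_k\colon f_1-(xy)^k=0$ in $\mathbb{C}^2/G$ (an invariant equation), passes to its strict transform on $Y_{max}$, and verifies the intersection numbers by hand on the affine charts $U_i''$, $U_{i+1}'$ of Remark \ref{flopcoordinate}, checking via the gluing maps that the equation meets the coordinate axes only in the chart containing $E_k$. You instead work upstairs on $X_1=\mathbb{Z}_n\operatorname{-Hilb}(\mathbb{C}^2)$, where the chart structure of the $A_{n-1}$ resolution is standard, take the generic fibre $\{u_k=c\}$ of a toric coordinate (which trivially satisfies $\tilde{D}_k\cdot\tilde{E}_j=\delta_{jk}$), and push forward along the double cover $\tau_2'$, computing $D\cdot E_j$ by the projection formula. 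The two approaches trade different kinds of work: the paper's is a self-contained chart computation in the coordinates used throughout Section 3 and avoids any appeal to the projection formula or to ramification; yours is shorter and more conceptual, treats odd and even $n$ uniformly, but needs the auxiliary inputs that $\tilde{D}_k\neq g\cdot\tilde{D}_k$ for generic $c$ (so that $(\tau_2')_*\tilde{D}_k=D$ with multiplicity one) and that $(\tau_2')^*E_{n/2}=\tilde{E}_{n/2}$ with multiplicity one. Your handling of that last point is the right one --- the ramification divisor of $\tau_2'$ is $\tilde{B}_1\cup\tilde{B}_2$ by Proposition \ref{smth}, so $\tau_2'$ is \'etale at the generic point of $\tilde{E}_{n/2}$, and one can cross-check the multiplicity against Lemma \ref{normal} via $(\tau_2')^*E_{n/2}\cdot(\tau_2')^*E_{n/2}=2\,E_{n/2}^2=-2=\tilde{E}_{n/2}^2$. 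One small imprecision: on the chart $U_{k+1}$ the equation of $\tilde{D}_k$ becomes $v_{k+1}=1/c$ (the exponent of $u_{k+1}$ in $u_k=u_j^{k-j+1}v_j^{k-j}$ vanishes for $j=k+1$), which does meet $\{u_{k+1}=0\}=\tilde{E}_k\cap U_{k+1}$; this is just the same intersection point $I_k(1:c)$ seen in the adjacent chart, so the conclusion stands, but the blanket claim that the monomial equation misses $\tilde{E}_{j-1}\cup\tilde{E}_j$ on every neighbouring chart should exclude this case.
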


\begin{proof}
	We prove this in odd $n$ case because the same argument will work in the even $n$ case.
	
	We refer to the open sets of the maximal resolution from Remark \ref{flopcoordinate}.
	
	Consider the locus in $\mathbb{C}^2/G$ defined by the equation $W_k: f_1 - (xy)^k = 0; (1 \leq k \leq m = \frac{n-1}{2})$.
	
	We shall illustrate for $k = 1$ as the rest of the cases can be performed very similarly.
	
	On $U_1''$, the locus $W_1$ is the line $xy/f_1 = 1$, which only intersects the coordinate axis at $(xy/f_1, f_1) = (1,0)$, which corresponds to a point on the exceptional divisor $E_1$.
	
	By the gluing between the open sets $U_1''$ and $U_2''$, this identifies the same point $(xy/f_1, f_1) = (1,0) = (f_1/xy, (xy)^2/f_1)$.
	
	Once again, by the gluing between the open sets $U_2''$ and $U_3''$ (and so on), the following equation defines the locus, which has no intersection with the coordinate axes of $U_i'', i \neq 1,2;$ which implies that there is no intersection with the exceptional divisors other than $E_1$:
	$$0 = \frac{f_1}{xy} - 1 = \left(\frac{(xy)^{k+1}}{(f_1)}\right)^{k-1} \cdot \left(\frac{(f_1)}{(xy)^k}\right)^{k-1} \cdot \frac{f_1}{(xy)^k} - 1.$$
\end{proof}

\begin{remark}
	For odd $n$, the boundary divisor $B_3$ is defined by a quadratic equation via Remark \ref{oddboundary}, hence, any divisor transversal to $E_m$ must intersect the boundary divisor.
\end{remark}

\begin{lemma}\label{tautdes}
	From the equivalence of categories $Coh^{\mathbb{Z}_2}(X_1) \cong Coh(\mathcal{Y})$:
	\[ O_{X_1} \otimes \delta \mapsto O_{\mathcal{Y}}(\mathcal{C}) := \begin{cases} O_{\mathcal{Y}}(\mathcal{B}_3 - \mathcal{D}) & \text{if } $n$ \text{ is odd}\\
		O_{\mathcal{Y}}(\mathcal{B}_1 - \mathcal{B}_2) & \text{if } $n$ \text{ is even}
	\end{cases}\]
	where $\mathcal{B}_i$ is a prime divisor on $\mathcal{Y}$ such that $2\mathcal{B}_i = \pi^{-1}(B_i)$ (this is the stacky locus); and $\mathcal{D} := \pi^{-1}(D)$, where $D$ satisfies the conditions in Lemma \ref{refdivisor}. (Refer to (\ref{eqb1}) and (\ref{eqb2}) for the definition of $B$.)
\end{lemma}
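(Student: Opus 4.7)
The plan is to produce an explicit $\tau$-anti-invariant rational function $h$ on $X_1$ and read off the divisor class of $\mathcal{O}_{X_1}\otimes\delta$ from its divisor, first on $X_1$ and then on the stack $\mathcal{Y}=[X_1/\mathbb{Z}_2]$. Under the equivalence $Coh^{\mathbb{Z}_2}(X_1)\cong Coh(\mathcal{Y})$, such an $h$ is precisely a rational global section of $\mathcal{O}_{X_1}\otimes\delta$ viewed as a line bundle on $\mathcal{Y}$, so its divisor on $\mathcal{Y}$ represents the class we want. The translation dictionary between divisors is provided by the atlas $q:X_1\to\mathcal{Y}$: we have $q^{*}\mathcal{B}_i=\tilde{B}_i$ on the stacky divisors (consistent with $\pi^{*}B_i=2\mathcal{B}_i$ on $\mathcal{Y}$ and the ramification $p^{*}B_i=2\tilde{B}_i$ on $X_1$), while for the non-stacky exceptional divisors $\mathcal{E}_j=\pi^{*}E_j$ we have $q^{*}\mathcal{E}_j=\tilde{E}_j+\tilde{E}_{n-j}$ when $\tau$ pairs $\tilde{E}_j$ with $\tilde{E}_{n-j}$.

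For the even-$n$ case I take $h=f_1/f_2$ with $f_1=x^{n/2}+y^{n/2}$ and $f_2=x^{n/2}-y^{n/2}$. Because $\epsilon^{n/2}=-1$, both $f_1,f_2$ are $\sigma$-anti-invariant, so the ratio $h$ is $\sigma$-invariant (hence a rational function on $X_1$) and $\tau$-anti-invariant. Local computations in the charts $U_j\subset X_1$, using $x^n=u_j^{n+1-j}v_j^{n-j}$, $y^n=u_j^{j-1}v_j^j$, and $(xy)^{n/2}=(u_jv_j)^{n/2}$, show that $f_1^{2}$ and $f_2^{2}$ have identical exceptional-divisor multiplicities on $X_1$; these cancel in the ratio and give $\operatorname{div}_{X_1}(h)=\tilde{B}_1-\tilde{B}_2$ with no exceptional contribution. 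Descending via $q^{*}\mathcal{B}_i=\tilde{B}_i$ yields $\operatorname{div}_{\mathcal{Y}}(h)=\mathcal{B}_1-\mathcal{B}_2$, and hence $\mathcal{O}_{X_1}\otimes\delta\cong\mathcal{O}_{\mathcal{Y}}(\mathcal{B}_1-\mathcal{B}_2)$.

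For the odd-$n$ case I take $h=x^{n}-y^{n}$, which is $\sigma$-invariant and $\tau$-anti-invariant. The factorization $x^{n}-y^{n}=u_j^{j-1}v_j^{j}\bigl(u_j^{n+2-2j}v_j^{n-2j}-1\bigr)$ on $U_j$ for $j\leq m$ (with the parenthetical factor non-vanishing on the coordinate axes) together with its $\tau$-symmetric analogue for $j>m$ gives $\operatorname{div}_{X_1}(h)=\tilde{B}_3+\sum_{j=1}^{n-1}\min(j,n-j)\,\tilde{E}_j=\tilde{B}_3+\sum_{j=1}^{m}j(\tilde{E}_j+\tilde{E}_{n-j})$, which descends to $\operatorname{div}_{\mathcal{Y}}(h)=\mathcal{B}_3+\sum_{j=1}^{m}j\,\mathcal{E}_j$. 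To recast this as $\mathcal{B}_3-\mathcal{D}$ it suffices to show $\sum_{j=1}^{m}j\,E_j+D\sim 0$ in $\operatorname{Pic}(Y_1)$. Since $Y_1$ is a sequence of blow-ups of the smooth variety $\mathbb{C}^{2}/G=\mathbb{C}^{2}$, $\operatorname{Pic}(Y_1)$ is freely generated by $E_1,\ldots,E_m$; the intersection-matrix data from Lemma \ref{normal} ($K_{Y_1}\cdot E_m=-1$ and $K_{Y_1}\cdot E_j=0$ for $j<m$) solve to $K_{Y_1}=\sum_{j=1}^{m}j\,E_j$, while the intersection conditions in Lemma \ref{refdivisor} ($D\cdot E_m=1$ and $D\cdot E_j=0$ for $j\neq m$) force $D\sim -K_{Y_1}$, giving the required equivalence.

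The main technical difficulty lies in the chart-by-chart verification of $\operatorname{div}_{X_1}(h)$: establishing that the exceptional contributions to $\operatorname{div}(f_1^{2})$ and $\operatorname{div}(f_2^{2})$ agree in the even-$n$ case, and computing them as $\min(j,n-j)$ in the odd-$n$ case. The second crucial ingredient is the intersection-theoretic identification $D\sim -K_{Y_1}$ in $\operatorname{Pic}(Y_1)$, which is what lets us rewrite the explicit exceptional sum $\sum j\,\mathcal{E}_j$ as $-\mathcal{D}$ in the odd-$n$ case.
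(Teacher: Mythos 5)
Your proof is correct, but it takes a genuinely different route from the paper's. The paper argues indirectly: it notes that $O_{X_1}\otimes\delta$ is a $2$-torsion element of $\operatorname{Pic}^{\mathbb{Z}_2}(X_1)$, asserts the linear equivalences $B_1\sim B_2$ (resp.\ $B_3\sim 2D$) on $Y_1$ so that the proposed answers are $2$-torsion on $\mathcal{Y}$, and then distinguishes them from $O_{\mathcal{Y}}$ by the nontrivial stabilizer action on the fibres along the stacky locus. You instead exhibit an explicit $\tau$-anti-invariant, $\sigma$-invariant rational function ($f_1/f_2$ for even $n$, $x^n-y^n$ for odd $n$), i.e.\ an equivariant rational trivialization of $O_{X_1}\otimes\delta$, and compute its divisor on the atlas and then on the stack via the dictionary $q^*\mathcal{B}_i=\tilde B_i$, $q^*\mathcal{E}_j=\tilde E_j+\tilde E_{n-j}$. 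The chart computations you defer are in fact already essentially done in the proof of Proposition \ref{smth}: with $x^n=u_j^{n+1-j}v_j^{n-j}$, $y^n=u_j^{j-1}v_j^j$, $(xy)^{n/2}=(u_jv_j)^{n/2}$ one gets that the multiplicities of $f_1^2$ and $f_2^2$ along $\tilde E_j$ both equal $\min(j,n-j)$, and likewise $v_{\tilde E_j}(x^n-y^n)=\min(j,n-j)$ for odd $n$, so your divisor formulas check out. Your conversion of $\sum_j j\,\mathcal{E}_j$ into $-\mathcal{D}$ is also sound: $\operatorname{Pic}(Y_1)$ is freely generated by the $E_j$ because $Y_{max}\cong Y_1$ is an iterated point blow-up of $\mathbb{C}^2/G\cong\mathbb{C}^2$ (Theorem \ref{maxmain}(1)), the exceptional intersection form is negative definite, and the data of Lemmas \ref{normal} and \ref{refdivisor} (with $k=m$, as required in the statement of Theorem \ref{mainthm2}) pin down $K_{Y_1}=\sum_j jE_j$ and $D\sim-K_{Y_1}$. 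What your approach buys is worth noting: squaring your sections yields $G$-invariant functions whose divisors on $Y_1$ are $B_1-B_2$ and $B_3+2\sum_j jE_j\sim B_3-2D$, which is precisely the justification of the linear equivalences that the paper's proof uses but does not establish; your argument also identifies the bundle outright rather than only locating it among the $2$-torsion classes.
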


\begin{proof}
	We comment first on the derivation for this description. The $\mathbb{Z}_2$-equivariant sheaf $O_{X_1} \otimes \delta$ is a torsion element of $Pic^{\mathbb{Z}_2}(X_1)$ of order $2$, so that the corresponding sheaf on the stack is also a torsion element of $Pic(\mathcal{Y})$ of order $2$. Because of the following relations:
	\begin{align*}
		O_{\mathcal{Y}} (2\mathcal{B}_3 - 2\mathcal{D}) &= \pi^*(O_{Y_1}(B_3 - 2D)) \cong \pi^*(O_{Y_1}) = O_{\mathcal{Y}} \mbox{ for odd $n$ }\\
		O_{\mathcal{Y}} (2\mathcal{B}_1 - 2\mathcal{B}_2) &= \pi^*(O_{Y_1}(B_1 - B_2)) \cong \pi^*(O_{Y_1}) = O_{\mathcal{Y}} \mbox{ for even $n$ }
	\end{align*}
	so that $O_{\mathcal{Y}}(\mathcal{B}_3 - \mathcal{D})$ and $O_{\mathcal{Y}}(\mathcal{B}_1 - \mathcal{B}_2)$ are torsion elements of $Pic(\mathcal{Y})$. It is important to point out that these are distinct divisors by considering $\mathcal{B}_i = [B_i/\mathbb{Z}_2]$. Another way is to consider the Fourier-Mukai images of $\mathcal{B}$ under $\Phi$ via Theorem \ref{stackfm}. The stabilizer groups of $\mathcal{B}_i$ and $\mathcal{D}$ are $\mathbb{Z}_2$ and $\{e\}$ (except for the intersection point with $\mathcal{B}_i$), respectively. 
	
	We further note that such $D$ exists by the previous Lemma \ref{refdivisor}.
	
	We consider the commutative diagram. This particular diagram defines the equivalence between the two categories. Our main argument here is to trace via the diagram.
	
	\begin{tikzcd}
		X_1 \times \mathbb{Z}_2 \arrow[r, "pr_{X_1}"] \arrow[d, "a"]& X_1 \arrow[d, "\pi"]\\
		X_1 \arrow[r, "\pi"]& \mathcal{Y}
	\end{tikzcd}
	
	which gives the isomorphism $\alpha: a^*\pi^*(O_{\mathcal{Y}}(\mathcal{C})) \xrightarrow{\sim} pr_{X_1}^*\pi^*(O_{\mathcal{Y}}(\mathcal{C}))$.
	
	By considering the fiber at each point, the stabilizer group of $\mathcal{C}$ is $\mathbb{Z}_2$ and for the other points being $\{e\}$. This induces the non-trivial action of $\mathbb{Z}_2$, and thus $\alpha$ is not identity.
\end{proof}

\begin{theorem}\label{mainthm2}
	The tautological bundles on the stack $\mathcal{Y} = [\mathbb{Z}_n\operatorname{-Hilb}(\mathbb{C}^2)/\mathbb{Z}_2]$ are described by the following: \begin{center}
		\captionof{table}{The Tautological Bundles for odd $n$}
		\begin{tabular}{|c c c|} 
			\hline
			Tautological Sheaf & Description & Chern Class \\ [0.5ex] 
			\hline
			$\hat{\mathcal{R}}_{\rho_0}$ & $\mathcal{O}_{\mathcal{Y}}$ & $0$ \\ 
			\hline
			$\hat{\mathcal{R}}_{\rho_0'}$ & $\mathcal{O}_{\mathcal{Y}}(\mathcal{B}_3 - \mathcal{D})$ & $\mathcal{B}_3 - \mathcal{D}$ \\
			\hline
			$\hat{\mathcal{R}}_{\rho_i}$ (rank 2) & $0 \rightarrow \mathcal{O}_{\mathcal{Y}} \xrightarrow{i} \hat{\mathcal{R}}_{\rho_i} \xrightarrow{pr} \mathcal{O}_{\mathcal{Y}}(\mathcal{D}_i + \mathcal{B}_3 - \mathcal{D}) \rightarrow 0$ & $\mathcal{D}_i + \mathcal{B}_3 - \mathcal{D}$ \\
			\hline
		\end{tabular}
		
		\captionof{table}{The Tautological Bundles for even $n$}
		\begin{tabular}{|c c c|} 
			\hline
			Tautological Sheaf & Description & Chern Class \\ [0.5ex] 
			\hline
			$\hat{\mathcal{R}}_{\rho_0}$ & $\mathcal{O}_{\mathcal{Y}}$ & $0$ \\ 
			\hline
			$\hat{\mathcal{R}}_{\rho_0'}$ & $\mathcal{O}_{\mathcal{Y}}(\mathcal{B}_1 -\mathcal{B}_2)$ & $\mathcal{B}_1 -\mathcal{B}_2$ \\
			\hline
			$\hat{\mathcal{R}}_{\rho_i}$ (rank 2) & $0 \rightarrow \mathcal{O}_{\mathcal{Y}} \xrightarrow{i} \hat{\mathcal{R}}_{\rho_i} \xrightarrow{pr} \mathcal{O}_{\mathcal{Y}}(\mathcal{D}_i + \mathcal{B}_1 - \mathcal{B}_2) \rightarrow 0$ & $\mathcal{D}_i + \mathcal{B}_1 - \mathcal{B}_2$ \\
			\hline
			$\hat{\mathcal{R}}_{\rho_{(n/2)}}$ & $\mathcal{O}_{\mathcal{Y}}(\mathcal{B}_1)$ & $\mathcal{B}_1$ \\
			\hline
			$\hat{\mathcal{R}}_{\rho_{(n/2)}'}$ & $\mathcal{O}_{\mathcal{Y}}(\mathcal{B}_2)$ & $\mathcal{B}_2$ \\ [1ex] 
			\hline
		\end{tabular}
	\end{center}
	
	The rank one tautological bundles on the stack are uniquely determined by their Chern classes; and the rank two tautological bundles are determined by an extension of two line bundles. Furthermore, there is only one possible non-trivial extension class, making these descriptions unique.
\end{theorem}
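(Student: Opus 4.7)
The plan is to compute $\hat{\mathcal{R}}_{\rho} = (\mathcal{R} \otimes \rho^{\vee})^{\mathbb{Z}_n}$ case by case, using the isotypic decomposition $\mathcal{R} = \bigoplus_{j} \mathcal{R}_{\epsilon_j} \otimes \epsilon_j$ coming from the cyclic McKay correspondence on $X_1 = \mathbb{Z}_n\operatorname{-Hilb}(\mathbb{C}^2)$, and then translate the induced $\mathbb{Z}_2$-equivariant structure to a sheaf on $\mathcal{Y}$ through the equivalence $Coh^{\mathbb{Z}_2}(X_1) \cong Coh(\mathcal{Y})$ together with the key identification in Lemma \ref{tautdes}. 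Throughout, the character restriction data $\rho_0|_{\mathbb{Z}_n} = \rho_0'|_{\mathbb{Z}_n} = \epsilon_0$, $\rho_{n/2}|_{\mathbb{Z}_n} = \rho_{n/2}'|_{\mathbb{Z}_n} = \epsilon_{n/2}$, and $\rho_i|_{\mathbb{Z}_n} = \epsilon_i \oplus \epsilon_{n-i}$ is the starting point.

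First I would dispatch the rank one cases. For $\rho_0$, we have $\mathcal{R}_{\epsilon_0} = \mathcal{O}_{X_1}$ carrying the trivial $\mathbb{Z}_2$-structure, yielding $\hat{\mathcal{R}}_{\rho_0} = \mathcal{O}_{\mathcal{Y}}$. For $\rho_0'$, the underlying sheaf is the same but twisted by the sign character $\delta$; Lemma \ref{tautdes} identifies $\mathcal{O}_{X_1} \otimes \delta$ with $\mathcal{O}_{\mathcal{Y}}(\mathcal{B}_3 - \mathcal{D})$ (odd $n$) or $\mathcal{O}_{\mathcal{Y}}(\mathcal{B}_1 - \mathcal{B}_2)$ (even $n$). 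For even $n$, both $\rho_{n/2}, \rho_{n/2}'$ come from the line bundle $\mathcal{R}_{\epsilon_{n/2}}$ equipped with one of the two possible $\mathbb{Z}_2$-lifts; since $\tilde{E}_{n/2}$ is pointwise fixed by the involution along the branches $\tilde{B}_1, \tilde{B}_2$ (Proposition \ref{smth}), the two lifts correspond to $\mathcal{O}_{\mathcal{Y}}(\mathcal{B}_1)$ and $\mathcal{O}_{\mathcal{Y}}(\mathcal{B}_2)$. Assigning them to $\rho_{n/2}$ and $\rho_{n/2}'$ is then forced by comparing the action of $\tau$ on the fiber at a chosen fixed point.

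For the two-dimensional $\rho_i$, a character computation gives $\hat{\mathcal{R}}_{\rho_i} \cong \mathcal{R}_{\epsilon_i} \oplus \mathcal{R}_{\epsilon_{n-i}}$ as $\mathcal{O}_{X_1}$-modules, with $\mathbb{Z}_2$-action induced by $\tau$ swapping the two summands (combining the involution on $X_1$ and the swap of basis vectors of $\rho_i$). The extension $\mathcal{O}_{\mathcal{Y}} \hookrightarrow \hat{\mathcal{R}}_{\rho_i}$ would be produced by the $\mathbb{Z}_2$-invariant diagonal section in $H^0(X_1, \mathcal{R}_{\epsilon_i} \oplus \mathcal{R}_{\epsilon_{n-i}})$ arising from the structure map $\mathcal{O}_{X_1 \times \mathbb{C}^2} \twoheadrightarrow \mathcal{O}_{\mathcal{Z}}$ of the universal family applied to the monomials $x^i, y^i$ (which span a copy of $\rho_i$). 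The quotient line bundle on $\mathcal{Y}$ has Chern class computed from $c_1(\mathcal{R}_{\epsilon_i}) = \tilde{D}_i$ (the cyclic McKay correspondence) together with the $\mathbb{Z}_2$-descent via Lemma \ref{finp}, yielding the correction $\mathcal{B}_3 - \mathcal{D}$ or $\mathcal{B}_1 - \mathcal{B}_2$.

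The main obstacle is verifying the uniqueness of the non-trivial extension class. I would compute
$$\operatorname{Ext}^1_{\mathcal{Y}}\bigl(\mathcal{O}_{\mathcal{Y}}(\mathcal{D}_i + \mathcal{C}),\, \mathcal{O}_{\mathcal{Y}}\bigr) \;\cong\; H^1\bigl(\mathcal{Y},\, \mathcal{O}_{\mathcal{Y}}(-\mathcal{D}_i - \mathcal{C})\bigr),$$
where $\mathcal{C}$ is the appropriate boundary correction. Pushing forward along $\pi\colon \mathcal{Y} \to Y_{max}$, the root-stack structure gives $R^{>0}\pi_* = 0$ (since the inertia is at worst $\mu_2$), reducing the cohomology to $H^1$ of an ordinary line bundle on the smooth surface $Y_{max}$. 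A local analysis near $E_i$ using the intersection numbers from Lemma \ref{normal} combined with Serre duality then pins the dimension at one, giving uniqueness up to scalar. Splitting is ruled out because it would descend a direct sum decomposition to $Y_1$, contradicting the indecomposability of the rank-two reflexive module attached to $\rho_i$ in the Artin-Verdier description; this also confirms that the non-trivial extension class must be chosen, completing the description.
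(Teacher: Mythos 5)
Your proposal is correct and follows essentially the same route as the paper: the same isotypic decomposition of $\mathcal{R}$ on $X_1$, the same use of Lemma \ref{tautdes} to identify the $\delta$-twist with $\mathcal{O}_{\mathcal{Y}}(\mathcal{B}_3-\mathcal{D})$ resp. $\mathcal{O}_{\mathcal{Y}}(\mathcal{B}_1-\mathcal{B}_2)$, the same Artin--Verdier-style extension for the rank-two sheaves, and the same one-dimensionality of the (equivariant) $\operatorname{Ext}^1$ for uniqueness. The only cosmetic difference is that you compute that $\operatorname{Ext}^1$ on the stack by pushing forward to $Y_{max}$, whereas the paper takes $\mathbb{Z}_2$-invariants on $X_1$ and restricts to the fundamental cycle; these are equivalent computations.
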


\begin{proof}
	We write an exact sequence involving (rank two) tautological sheaves $\tilde{\mathcal{R}}_{\rho_i}$ (as $\mathbb{Z}_2$-equivariant sheaves) similar to the proof of Theorem 1.11 of \cite{artverd}:
	$$0 \rightarrow \mathcal{O}_{X_1} \xrightarrow{i} \mathcal{O}_{X_1}(\tilde{D}_i) \oplus \mathcal{O}_{X_1}(g \cdot \tilde{D}_{i}) = \tilde{\mathcal{R}}_{\rho_i} \xrightarrow{pr} \mathcal{O}_{X_1}(\tilde{D}_i + g \cdot \tilde{D}_{i}) \otimes \delta \rightarrow 0,$$
	where $i$ is the inclusion, $\delta$ is the nontrivial representation of $\mathbb{Z}_2$ (and $g$ correspond to the non-trivial element of $\mathbb{Z}_2$), and $pr(h_1, h_2) = h_1-h_2$.
	
	We show that this extension over $X_1$ is unique, which implies on the tautological sheaves on the stack. We let $F = \Sigma \tilde{E}_i$ be the fundamental cycle. In the following, $\mathbb{Z}_2\operatorname{-Ext}^1_X(-,-)$ is defined as the $\mathbb{Z}_2$-invariant part of $Ext_X^1(-,-)$.
	\begin{align*}
		\mathbb{Z}_2\operatorname{-Ext}_{\mathcal{O}_{X_1}}^1(\mathcal{O}_{X_1}(\tilde{D}_i + g \cdot \tilde{D}_{i}) \otimes \delta, \mathcal{O}_{X_1}) &=\mathbb{Z}_2\operatorname{-Ext}_{\mathcal{O}_{X_1}}^1(\mathcal{O}_{X_1}, \mathcal{O}_{X_1}(- \tilde{D}_i - g \cdot \tilde{D}_{i}) \\ 
		&\otimes \delta)\\
		&= H^1(X_1, \mathcal{O}_{X_1}(- \tilde{D}_i - g \cdot \tilde{D}_{i}) \otimes \delta)^{\mathbb{Z}_2}\\
		&= H^1(F, \mathcal{O}_{X_1}(- \tilde{D}_i - g \cdot \tilde{D}_{i})|_F)^{\mathbb{Z}_2}\\
		&= H^1(F, \mathcal{O}_F(- \tilde{D}_i - g \cdot \tilde{D}_{i}))^{\mathbb{Z}_2}\\
		&\cong \mathbb{C}
	\end{align*}
	which is one dimensional over $\mathbb{C}$. Thus, there is only one possible non-trivial extension (up to scalar).
	
	From the Lemma \ref{tautdes}, this corresponds over the global quotient stack $\mathcal{Y}$:
	
	$$0 \rightarrow \mathcal{O}_{\mathcal{Y}} \xrightarrow{i} \hat{\mathcal{R}}_{\rho_i} \xrightarrow{pr} \mathcal{O}_{\mathcal{Y}}(\mathcal{D}_i + \mathcal{C}) \rightarrow 0,$$
	where $\hat{\mathcal{R}}_{\rho_i}$ and $\mathcal{O}_{\mathcal{Y}}(\mathcal{D}_i)$ is the corresponding image in the isomorphism of the tautological sheaf $\tilde{\mathcal{R}}_{\rho_i}$ and the invertible sheaf $\mathcal{O}_{X_1}(\tilde{D_i} + g \cdot \tilde{D_i})$, respectively.
	
	The rank one tautological sheaves $\hat{\mathcal{R}}_{\rho_0'}, \hat{\mathcal{R}}_{\rho_{n/2}}, \hat{\mathcal{R}}_{\rho_{n/2}'}$ are obtained via the commutative diagram, where $\Phi^{\mathbb{Z}_n}$ is the equivalence defined in \cite{bridgelandkingreid}: 
	
	\begin{tikzcd}
		D(coh^{D_{2n}}(\mathbb{C}^2))  \arrow[d, "\operatorname{for}"] \arrow[r, "\Phi"] & D(coh^{\mathbb{Z}_2}(X_1)) \arrow[d, "\operatorname{for}"]\\
		D(coh^{\mathbb{Z}_n}(\mathbb{C}^2)) \arrow[r, "\Phi^{\mathbb{Z}_n}"] & D(coh(X_1))
	\end{tikzcd}
	
	We demonstrate this for $\rho_{n/2}$ and the rest of the cases are similar. From the definition of $\hat{\mathcal{R}}_{\rho_{n/2}}$, this must come from a $\mathbb{Z}_2$-lift of $(\Phi_{\mathbb{Z}_n} \circ \operatorname{for})(O_{\mathbb{C}^2} \otimes \rho_{n/2}) = \tilde{\mathcal{R}}_{\epsilon_{n/2}}$, which must be a $\mathbb{Z}_2$-line bundle with degree $1$. But the only divisors that satisfy such property are $\tilde{B}_1$ and $\tilde{B}_2$. Since $\tilde{B}_1$ is invariant under the action of $\rho_{n/2}$, this describes the tautological sheaf $\tilde{\mathcal{R}}_{\rho_{n/2}} = O_{X_1}(\tilde{B}_1)$. In addition, $\tilde{\mathcal{R}}_{\rho_{n/2}'} = \tilde{\mathcal{R}}_{\rho_{n/2}} \otimes \delta$, which gives the complete description for the tautological sheaves on $X_1$.
	
\end{proof}

\begin{remark}
	The theory of Chern classes also works for Deligne-Mumford stacks via the theory of Chow groups with rational coefficients. This is shown in Section 3 of \cite{vistoli}.
\end{remark}

We now compare the tautological sheaves over the stack, and the bundles over the coarse moduli space. We will see that the rank two bundles split over the coarse moduli space.

Taking the pushforward and the $\mathbb{Z}_2$-invariants of the exact sequence above, we get the exact sequence on $Y_1$:
$$0 \rightarrow O_{Y_1} \rightarrow \mathcal{R}_{\rho_i} \rightarrow O_{Y_1}(D_i + L) \rightarrow 0,$$
where $D_i$ is the image $p(\tilde{D_i})$.

\begin{lemma}\label{split}
	The exact sequence over $Y_1$ is split. Equivalently: $$H^1(Y_1, \mathcal{O}_{Y_1}(L + D_i)) = 0.$$
\end{lemma}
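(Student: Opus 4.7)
The splitting of the extension $0 \to \mathcal{O}_{Y_1} \to \mathcal{R}_{\rho_i} \to \mathcal{O}_{Y_1}(D_i+L) \to 0$ is controlled by its class in $\operatorname{Ext}^1_{Y_1}(\mathcal{O}_{Y_1}(D_i+L), \mathcal{O}_{Y_1}) \cong H^1(Y_1, \mathcal{O}_{Y_1}(-D_i-L))$. After passing to the double cover $p:X_1\to Y_1$, the sign flip between $-L-D_i$ and $L+D_i$ is absorbed by the ramification divisor, so the two cohomology conditions collapse to the same vanishing upstairs; this is why the lemma states the equivalence in the form $H^1(Y_1, \mathcal{O}_{Y_1}(L+D_i))=0$.

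The plan is to push the computation up to $X_1$. Since $p$ is finite (in particular affine), $R^jp_*=0$ for $j\ge 1$, and $H^i(Y_1,\mathcal{F})$ sits as the $\mathbb{Z}_2$-invariant summand of $H^i(X_1,p^*\mathcal{F})$. By construction $p^*\mathcal{O}_{Y_1}(D_i) = \mathcal{O}_{X_1}(\tilde{D}_i+g\tilde{D}_i)$, and by Lemma \ref{finp} together with the ramification of $p$ along the pseudoreflection locus,
\[
p^*\mathcal{O}_{Y_1}(L) \;=\; \mathcal{O}_{X_1}(-\tilde{B}),
\]
with $\tilde{B}=\tilde{B}_3$ (odd $n$) or $\tilde{B}=\tilde{B}_1+\tilde{B}_2$ (even $n$). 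Hence it suffices to establish
\[
H^1\!\bigl(X_1,\ \mathcal{O}_{X_1}(\tilde{D}_i+g\tilde{D}_i-\tilde{B})\bigr) \;=\; 0.
\]

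Next, since $\mathbb{C}^2/\mathbb{Z}_n$ is affine, Leray for $f_1:X_1\to \mathbb{C}^2/\mathbb{Z}_n$ gives $H^1(X_1,\mathcal{L}) = H^0(\mathbb{C}^2/\mathbb{Z}_n, R^1(f_1)_*\mathcal{L})$, so the question becomes a purely local one at the singular point. Because $f_1$ is crepant (minimal resolution of an $A_{n-1}$ singularity), $\omega_{X_1}$ is $f_1$-trivial, and the classical vanishing for rational surface singularities (Lipman/Artin) applies: $R^1(f_1)_*\mathcal{O}_{X_1}(M)=0$ whenever $M\cdot \tilde{E}_j\ge -1$ for every exceptional curve $\tilde{E}_j$. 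To verify the hypothesis for $M=\tilde{D}_i+g\tilde{D}_i-\tilde{B}$, one tabulates the intersection numbers: by Lemma \ref{refdivisor}, $\tilde{D}_i\cdot \tilde{E}_j=\delta_{ij}$, hence $(\tilde{D}_i+g\tilde{D}_i)\cdot \tilde{E}_j=\delta_{i,j}+\delta_{n-i,j}$, while $\tilde{B}$ meets only the middle exceptional curve(s), with intersection $1$, as read off from Figures \ref{config1}–\ref{config2} and the local equations in the proof of Proposition \ref{smth}. Every $M\cdot \tilde{E}_j$ is then $\ge 0$, and the vanishing follows.

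The main obstacle is the combinatorial bookkeeping of how $\tilde{B}$ meets the chain of exceptional curves, which depends on the parity of $n$: in the even case one must verify that $\tilde{B}_1$ and $\tilde{B}_2$ meet $\tilde{E}_{n/2}$ at distinct smooth points (so each contributes intersection $1$ on $\tilde{E}_{n/2}$ and $0$ elsewhere), while in the odd case $\tilde{B}_3$ passes through the node $\tilde{E}_{(n-1)/2}\cap\tilde{E}_{(n+1)/2}$ transversally, contributing intersection $1$ with each of the two neighbouring curves. Both facts are already implicit in the explicit charts of Remark \ref{flopcoordinate} and the $\mathbb{Z}_2$-fixed-locus computation in Proposition \ref{smth}, so once those intersections are recorded the numerical hypothesis of Lipman's vanishing is satisfied curve-by-curve and the conclusion is immediate.
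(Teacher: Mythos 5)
Your reduction to the double cover is where the argument breaks. You replace the target vanishing by $H^1\bigl(X_1,\mathcal{O}_{X_1}(\tilde{D}_i+g\tilde{D}_i-\tilde{B})\bigr)=0$, but this group is in general \emph{nonzero}; only its $\mathbb{Z}_2$-invariant summand (which by the projection formula is $H^1(Y_1,\mathcal{O}_{Y_1}(L+D_i))$, the other summand being $H^1(Y_1,\mathcal{O}_{Y_1}(D_i-B))$) vanishes. Your own intersection numbers show this: for even $n$ and $i\neq n/2$ one gets $M\cdot\tilde{E}_{n/2}=0+0-1-1=-2$, not $\ge 0$ as you claim, and then $\mathcal{O}(M)|_{\tilde{E}_{n/2}}\cong\mathcal{O}_{\mathbb{P}^1}(-2)$ has $H^1=\mathbb{C}$, which survives to $H^1(X_1,\mathcal{O}(M))$ because $H^2$ vanishes on a resolution of an affine surface. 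In the odd case $M$ has degree $-1$ on each of the two \emph{adjacent} curves $\tilde{E}_{(n\pm1)/2}$, and a connected reduced cycle containing two adjacent degree $-1$ components already has nonvanishing $H^1$, so the same failure occurs. (The criterion ``$M\cdot\tilde{E}_j\ge -1$ for all $j$'' is also not a correct form of Lipman's vanishing, as this example shows.) The opening claim that the sign flip between $-L-D_i$ and $L+D_i$ is ``absorbed by the ramification divisor'' is likewise unjustified: the two pullbacks are inverse line bundles on $X_1$, not isomorphic ones.

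The point you are missing, and the reason the paper stays on $Y_1$, is that the quotient map halves the offending degree: $\tilde{E}_{n/2}\to E_m$ is $2{:}1$ (resp.\ $\tilde{E}_{(n-1)/2}\cup\tilde{E}_{(n+1)/2}$ is folded onto a single $E_m$), so $\deg\bigl(\mathcal{O}_{Y_1}(L+D_i)|_{E_m}\bigr)=L\cdot E_m=-1$, which is harmless for $H^1$ on $\mathbb{P}^1$, while all other $E_j$ receive degree $\ge 0$. The paper then runs the standard induction on multiples $kZ$ of the fundamental cycle and concludes by the theorem of formal functions applied to $R^1(f_2)_*$. So the correct strategy is to do the degree bookkeeping downstairs on $Y_1$ (or, equivalently, to track the $\mathbb{Z}_2$-equivariant structure and extract invariants before invoking any vanishing theorem); a vanishing statement on $X_1$ itself is simply false here.
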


\begin{proof}
	Let $Z$ be the fundamental cycle of the maximal resolution $Y_{max} \rightarrow \mathbb{C}^2/D_{2n}$.
	The sheaf in question is $\mathcal{F} := \mathcal{O}_{Y_1}(L + D_i)$. Computing the degrees of $\mathcal{F}|_{E_j}$, $\mathcal{F}|_Z$, and $\mathcal{F} \otimes \mathcal{O}(-nZ)|_Z$:
	\begin{align*}
		deg(\mathcal{F}|_{E_j}) &= (L + D_i) \cdot E_j.\\
		deg(\mathcal{O}(Z)|_Z) &= -1.\\
		deg(\mathcal{F} \otimes \mathcal{O}(-nZ)|_Z) &= n.
	\end{align*}
	Thus, $H^1(\mathcal{F}|_Z) = 0$, and $H^1(\mathcal{F} \otimes \mathcal{O}(-nZ)|_Z) = 0$.
	Using induction and the exact sequence:
	$$0 \rightarrow \mathcal{F} \otimes \mathcal{O}(-nZ)|_Z \rightarrow \mathcal{F}|_{(n+1)Z} \rightarrow \mathcal{F}|_{nZ} \rightarrow 0,$$
	we have $H^1(\mathcal{F}|_{nZ}) = 0$, for all $n > 0$. By the theorem of formal functions (see Ch.III, Sec.11 of \cite{hartshorne1}), since $R^1(f_2)_*(\mathcal{F})$ is supported on the origin of $\mathbb{C}^2/D_{2n}$ with $f_2^{-1}(0) = Z$, this implies the lemma.
\end{proof}

\begin{remark}
	The lemma can also be applied if $D_i$ is replaced by any of the boundary divisors $B_1$ or $B_2$ since it will use the very same argument. This is needed for the following description of tautological bundles on the coarse moduli space.
\end{remark}

\begin{theorem}\label{coarsetaut}
	
	The tautological bundles on the coarse moduli space $Y_1 = \mathbb{Z}_n\operatorname{-Hilb}(\mathbb{C}^2)/\mathbb{Z}_2$ are described by the following:
	
	\begin{center}
		\captionof{table}{The Tautological Sheaves for odd $n$}
		\begin{tabular}{|c c c|} 
			\hline
			Tautological Bundle & Description & Chern Class \\ [0.5ex] 
			\hline
			$\mathcal{R}_{\rho_0}$ & $\mathcal{O}_{Y_1}$ & $0$ \\ 
			\hline
			$\mathcal{R}_{\rho_0'}$ & $\mathcal{O}_{Y_1}(L)$ & $L$ \\
			\hline
			$\mathcal{R}_{\rho_i}$ (rank 2) & $\mathcal{R}_{\rho_i} = \mathcal{O}_{Y_1} \oplus \mathcal{O}_{Y_1}(D_i + L)$ & $D_i + L$ \\
			\hline
		\end{tabular}
		
		\captionof{table}{The Tautological Sheaves for even $n$}
		\begin{tabular}{|c c c|} 
			\hline
			Tautological Bundle & Description & Chern Class \\ [0.5ex] 
			\hline
			$\mathcal{R}_{\rho_0}$ & $\mathcal{O}_{Y_1}$ & $0$ \\ 
			\hline
			$\mathcal{R}_{\rho_0'}$ & $\mathcal{O}_{Y_1}(L)$ & $L$ \\
			\hline
			$\mathcal{R}_{\rho_i}$ (rank 2) & $\mathcal{R}_{\rho_i} = \mathcal{O}_{Y_1} \oplus \mathcal{O}_{Y_1}(D_i + L)$ & $D_i + L$ \\
			\hline
			$\mathcal{R}_{\rho_{(n/2)}}$ & $\mathcal{O}_{Y_1}(B_1 + L)$ & $B_1 + L$ \\
			\hline
			$\mathcal{R}_{\rho_{(n/2)}'}$ & $\mathcal{O}_{Y_1}(B_2 + L)$ & $B_2 + L$ \\
			\hline
		\end{tabular}
	\end{center}
\end{theorem}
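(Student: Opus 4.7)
The idea is to deduce Theorem~\ref{coarsetaut} from its stacky analogue Theorem~\ref{mainthm2} by pushing forward along the coarse moduli morphism $\pi:\mathcal{Y}\to Y_1$, or equivalently (via $\operatorname{Coh}(\mathcal{Y})\cong\operatorname{Coh}^{\mathbb{Z}_2}(X_1)$) by taking $\mathbb{Z}_2$-invariants of $p_*$ applied to the $X_1$-lifts of the tautological sheaves recorded in the proof of Theorem~\ref{mainthm2}. The basic dictionary is Lemma~\ref{finp}, which identifies the non-trivial $\mathbb{Z}_2$-isotypic summand of $p_*\mathcal{O}_{X_1}$ with $\mathcal{O}_{Y_1}(L)$.

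\textbf{Rank-one cases.} The case $\rho_0$ reduces to Lemma~\ref{finp}(3). For $\rho_0'$, Lemma~\ref{tautdes} identifies the stacky sheaf with $\mathcal{O}_{X_1}\otimes\delta$, and Lemma~\ref{finp}(3) then gives $\mathcal{R}_{\rho_0'}=(p_*(\mathcal{O}_{X_1}\otimes\delta))^{\mathbb{Z}_2}=\mathcal{O}_{Y_1}(L)$. For the even $n$ half-representations I use the $X_1$-lifts $\tilde{\mathcal{R}}_{\rho_{n/2}}=\mathcal{O}_{X_1}(\tilde B_1)$ and $\tilde{\mathcal{R}}_{\rho_{n/2}'}=\mathcal{O}_{X_1}(\tilde B_2)$ established in the proof of Theorem~\ref{mainthm2}; the projection formula combined with Lemma~\ref{finp}(1) and (3), together with a local check at the two $\mathbb{Z}_2$-fixed points of $X_1$ computed in Proposition~\ref{smth} and the fact that $p^*B_i=2\tilde B_i$, isolates the correct isotypic summand as $\mathcal{O}_{Y_1}(B_i+L)$.

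\textbf{Rank-two case.} For the two-dimensional $\rho_i$ I push forward the $X_1$-level exact sequence
\[
0\to\mathcal{O}_{X_1}\to\mathcal{O}_{X_1}(\tilde D_i)\oplus\mathcal{O}_{X_1}(g\cdot\tilde D_i)\to\mathcal{O}_{X_1}(\tilde D_i+g\cdot\tilde D_i)\otimes\delta\to 0
\]
from the proof of Theorem~\ref{mainthm2}. Since $p$ is finite, $p_*$ is exact; taking $\mathbb{Z}_2$-invariants and using the projection formula together with Lemma~\ref{finp}(3) on the quotient term (via $\mathcal{O}_{X_1}(\tilde D_i+g\cdot\tilde D_i)=p^*\mathcal{O}_{Y_1}(D_i)$) produces the sequence
\[
0\to\mathcal{O}_{Y_1}\to\mathcal{R}_{\rho_i}\to\mathcal{O}_{Y_1}(D_i+L)\to 0
\]
already displayed just before Lemma~\ref{split}. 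Its splitting is exactly Lemma~\ref{split}, so $\mathcal{R}_{\rho_i}\cong\mathcal{O}_{Y_1}\oplus\mathcal{O}_{Y_1}(D_i+L)$.

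\textbf{Main obstacle.} The rank-two splitting comes for free from Lemma~\ref{split}; the principal subtlety is the bookkeeping of $\mathbb{Z}_2$-equivariant structures in the rank-one half-representations for even $n$. One must verify that the invariant/anti-invariant decomposition of $p_*\mathcal{O}_{X_1}(\tilde B_i)$ places $\mathcal{O}_{Y_1}(B_i+L)$ in the isotypic component dictated by $\rho_{n/2}$ versus $\rho_{n/2}'$ in the character table, despite $\tilde B_i$ being the ramification divisor rather than the pullback of $B_i$. The remark following Lemma~\ref{split} extends the required $H^1$-vanishing to $B_1$ and $B_2$, so once the isotypic assignment is fixed by a local check at the $\mathbb{Z}_2$-fixed points, the entire table follows formally from Lemma~\ref{finp}.
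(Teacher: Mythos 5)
Your proposal matches the paper's proof in all essentials: Lemma \ref{finp}(3) handles $\mathcal{R}_{\rho_0}$ and $\mathcal{R}_{\rho_0'}$, the pushforward of the $X_1$-level extension together with the splitting from Lemma \ref{split} gives the rank-two bundles, and the determinant formula of Lemma \ref{finp}(1) combined with the $H^1$-vanishing (extended to $B_1,B_2$ as in the remark after Lemma \ref{split}) and the identification $\tilde{\mathcal{R}}_{\rho_{n/2}}=\mathcal{O}_{X_1}(\tilde B_1)$ pins down the remaining one-dimensional cases. The only cosmetic difference is that the paper isolates the invariant summand of $p_*(\mathcal{O}_{X_1}(\tilde B_1)\oplus\mathcal{O}_{X_1}(\tilde B_2))$ via an explicit exact sequence and $\operatorname{Ext}^1(\mathcal{O}_{Y_1},\mathcal{O}_{Y_1})=0$, whereas you phrase the same step through the projection formula and a local check at the fixed points.
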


\begin{proof}
	Statement (3) of Lemma \ref{finp} describes the tautological sheaves $\mathcal{R}_{\rho_0}$ and $\mathcal{R}_{\rho_0'}$. Lemma \ref{split} implies the description of the rank $2$ tautological bundles $\mathcal{R}_{\rho_i}$.
	
	Using the similar exact sequence as applied in Lemma \ref{split}:
	$$0 \rightarrow O_{X_1} \rightarrow O_{X_1}(\tilde{B}_1) \oplus O_{X_1}(\tilde{B}_2) \rightarrow O_{X_1}(\tilde{B}_1 + \tilde{B}_2) \otimes \delta \rightarrow 0.$$
	Taking the push-forward $p_*$ and $\mathbb{Z}_2$-invariants gives the exact sequence: 
	$$0 \rightarrow O_{Y_1} \rightarrow [p_*(O_{X_1}(\tilde{B}_1) \oplus O_{X_1}(\tilde{B}_2))]^{\mathbb{Z}_2} \rightarrow O_{Y_1} \rightarrow 0.$$
	Because $\operatorname{Ext}^1(O_{Y_1}, O_{Y_1}) = H^1(Y_1, O_{Y_1}) = H^1(\mathbb{C}^2/G, O_{\mathbb{C}^2/G}) = 0$, this implies that $[p_*(O_{X_1}(\tilde{B}_1) \oplus O_{X_1}(\tilde{B}_2))]^{\mathbb{Z}_2} = (O_{Y_1})^{\oplus 2}$. Furthermore, by the statement (1) of Lemma \ref{finp}, we have the corresponding descriptions for $\mathcal{R}_{\rho_{(n/2)}}$ and $\mathcal{R}_{\rho_{(n/2)}'}$.
\end{proof}

\begin{remark}
	\indent
	\begin{enumerate}
		\item A consequence of Theorem \ref{mainthm2} is that the rank $2$ tautological sheaves can be expressed as an extension of two line bundles on the stack. Because of the structure of the line bundles which are torsion elements of the Picard group of the stack as given in the Lemma \ref{tautdes}, certainly, it is not generated by global sections, which is an obstruction in writing an exact sequence similar to Theorem 1.1 of \cite{artverd}.
		\item Lemma \ref{split} implies that over the coarse moduli space $Y_1$, the rank two tautological bundles split, whereas over the stack $[X_1/\mathbb{Z}_2]$, the tautological bundle does not split. Furthermore, there is a parallel description of \cite{artverd} that can be applied to stacks (especially when identified as $\mathbb{Z}_2$-equivariant sheaves on $X_1$). This makes the stack a more appropriate venue to establish the McKay correspondence for the dihedral group than the coarse moduli space.
		\item The rank two tautological bundles $\tilde{\mathcal{R}}_{\rho_i}$ ($\rho_i \neq \rho_0, \rho_0', \rho_{n/2}, \rho_{n/2}'$) can be realized as an extension via the exact sequences: 
		\begin{align*}
			0 &\rightarrow p^*(\mathcal{R}_{\rho_i}) \rightarrow \tilde{\mathcal{R}}_{\rho_i} \rightarrow O_{\tilde{B}} \rightarrow 0 \mbox{ and } \\
			0 &\rightarrow O_{Y_{max}} \rightarrow p_*(O_{\mathbb{Z}_n\operatorname{-Hilb}(\mathbb{C}^2)}) \rightarrow O_{Y_{max}}(L) \rightarrow 0
		\end{align*} where $\tilde{\mathcal{R}}_{\rho_i} = O_{X_1}(\tilde{D_i}) \oplus O_{X_1}(g \cdot \tilde{D_i})$ and $p_*(\tilde{\mathcal{R}}_{\rho_i}) = O_{Y_{max}} \oplus O_{Y_{max}}(D_i) \otimes O_{Y_{max}}(L) \otimes \delta$, where $D_i$ is any transversal to the exceptional divisor $E_i$ not intersecting $E_j, j \neq i$, and not intersecting the boundary divisors; and correspondingly for $\tilde{D_i}$, which is a transversal to $\tilde{E_i}$ not intersecting on other exceptional divisors and not intersecting the boundary divisors. The exact sequence is obtained by evaluating Chern classes of each sheaves with the fact that the boundary divisors $\tilde{B}$ being isomorphic to $\mathbb{A}^1$.
		
		The extension is classified by:
		\begin{align*}
			Ext^1(O_{\tilde{B}}, p^*(\mathcal{R}_{\rho_i})) &\cong Ext^1(O_{\tilde{B}}, O(\tilde{D_i} + \tilde{D}_{n-i} - \tilde{B}))\\ 
			&\cong H^0(\tilde{B}, O(\tilde{D_i} + \tilde{D}_{n-i})|_{\tilde{B}})\\
			&\cong H^0(\tilde{B}, O_{\tilde{B}}^{\oplus 2})\\
			&\cong (\mathbb{C}[\tilde{B}])^{\oplus 2}
		\end{align*}
	\end{enumerate}
\end{remark}

\section{The McKay Correspondence via the Top and the Socles}
In this section, we reveal that the top and socles (defined in Definition \ref{topsoc}) can be described over the stacks, but not enough to construct such a correspondence over the coarse moduli space.

We refer to the previous section for the functors $\Phi$ and $\Psi$.

We wish to evaluate $\Phi(O_0 \otimes \rho_i^*)$ for $\rho_i \in \operatorname{Irr}(D_{2n})$.

By referring to the results in \cite{art1ishii} regarding the functor $\Phi^{\mathbb{Z}_n}$, the computation of $\Phi(O_0 \otimes \rho_i^*)$ reduces to the task of determining $\mathbb{Z}_2$-equivariant structures on $\Phi^{\mathbb{Z}_n}(\operatorname{for}(O_0 \otimes \rho_i^*))$.

We define the representations (as a pair) $(P_i, \rho_i)$ as a representation of $D_{2n}$ and $(D_i, \epsilon_i := (\rho_i)|_{\mathbb{Z}_n})$ as its corresponding restriction to the cyclic group $\mathbb{Z}_n$. Furthermore, we consider the exact sequence $0 \rightarrow D_i \hookrightarrow P_i \rightarrow P_i/D_i \rightarrow 0$, so that $(P_i/D_i, \delta_i)$ is a representation of $D_{2n}/\mathbb{Z}_n \cong \mathbb{Z}_2$. We have the following:

\begin{proposition}
	Using the Fourier-Mukai transform \begin{align*}
		\Phi: D^{D_{2n}}(\mathbb{C}^2) &\rightarrow D^{\mathbb{Z}_2}(X_1) \\
		\mathfrak{G} &\mapsto Rp_{X_1*} (p_{\mathbb{C}^2}^*(\mathfrak{G}) \otimes O_{\mathcal{Z}}),
	\end{align*} the following images of structure sheaves at the origin has the following images:
	\begin{align*}
		\Phi(O_0 \otimes \rho_0^*) &= O_F \otimes \delta_0 \mbox{ (for any $n$)}\\
		\Phi(O_0 \otimes \rho_0'^*) &= O_F \otimes \delta_0' \mbox{ (for any $n$)}\\
		\Phi(O_0 \otimes \rho_j^*) &= (O_{\tilde{E}_j}(-1) \oplus O_{\tilde{E}_{n-j}}(-1))[1] \mbox{ (for any $n$, and $j \neq \frac{n}{2}, \frac{n-1}{2}$)}\\
		\Phi(O_0 \otimes \rho_{n/2}^*) &= O_{\tilde{E}_{n/2}}(-\tilde{B}_1)[1] \mbox{ (for even $n$)}\\
		\Phi(O_0 \otimes \rho_{n/2}'^*) &= O_{\tilde{E}_{n/2}}(-\tilde{B}_2)[1] \mbox{ (for even $n$)}\\
		\Phi(O_0 \otimes \rho_{(n-1)/2}^*) &= O_{\tilde{E}_{(n-1)/2}}\left(-\tilde{B}_3 \right) [1] \mbox{ (for odd $n$)}
	\end{align*}
	
	where we refer to (\ref{excd}), (\ref{eqb1}), and (\ref{eqb2}) for the definition of $\tilde{E}$ and $\tilde{B}$; and $F$ is the fundamental cycle $\Sigma \tilde{E_i}$. Also, the group $D_{2n}/\mathbb{Z}_n \cong \mathbb{Z}_2$ fixes the subschemes $F$, and $\tilde{E_i} \cup \tilde{E}_{n-i}$; thus, $\mathbb{Z}_2$ acts on the line bundles $O_F$, $O_{\tilde{E}_i}(-1) \oplus O_{\tilde{E}_{n-i}}(-1)$, and $O_{\tilde{E}_{n/2}}(-1)$.
\end{proposition}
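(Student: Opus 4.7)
The plan is to compute $\Phi$ by reducing to the cyclic case $\Phi^{\mathbb{Z}_n}$ already treated in \cite{art1ishii}, and then lifting the underlying complex on $X_1$ to a $\mathbb{Z}_2$-equivariant object. The text preceding the proposition already sets up this reduction: via the commutative diagram relating $\Phi$ and $\Phi^{\mathbb{Z}_n}$ through the forgetful functors from $D_{2n}$-equivariance to $\mathbb{Z}_n$-equivariance and to the trivial group, the underlying non-equivariant complex of $\Phi(O_0\otimes\rho_i^*)$ must equal $\Phi^{\mathbb{Z}_n}(\mathrm{for}(O_0\otimes\rho_i^*))$, and the residual action of $D_{2n}/\mathbb{Z}_n\cong\mathbb{Z}_2$ on $X_1$ then pins down the equivariant lift.

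The first step is to record the branching of each $D_{2n}$-irrep under restriction to $\mathbb{Z}_n$: both $\rho_0,\rho_0'$ restrict to $\epsilon_0$; every two-dimensional $\rho_j$ restricts to $\epsilon_j\oplus\epsilon_{n-j}$; and for even $n$ both $\rho_{n/2},\rho_{n/2}'$ restrict to $\epsilon_{n/2}$. Applying the cyclic-group formulas $\Phi^{\mathbb{Z}_n}(O_0\otimes\epsilon_0^*)=O_F$ and $\Phi^{\mathbb{Z}_n}(O_0\otimes\epsilon_k^*)=O_{\tilde{E}_k}(-1)[1]$ for $k\neq 0$ then supplies the underlying non-equivariant complex on $X_1$ in every case listed.

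For the $\mathbb{Z}_2$-lift in the generic cases I would argue as follows. When $\rho_j$ is two-dimensional with $j\neq (n-1)/2,n/2$, the divisors $\tilde{E}_j$ and $\tilde{E}_{n-j}$ are disjoint and exchanged by $\tau$, so the only $\mathbb{Z}_2$-equivariant structure on $O_{\tilde{E}_j}(-1)[1]\oplus O_{\tilde{E}_{n-j}}(-1)[1]$ is the swap structure, and this must equal $\Phi(O_0\otimes\rho_j^*)$. For $\rho_0,\rho_0'$ the fundamental cycle $F$ is $\mathbb{Z}_2$-invariant, so $O_F$ admits precisely two $\mathbb{Z}_2$-lifts, differing by the sign character $\delta_0'$; matching with the sign of $\tau$ on the source gives the assignment $\rho_0\mapsto O_F\otimes\delta_0$, $\rho_0'\mapsto O_F\otimes\delta_0'$.

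The main obstacle is the middle-of-chain cases where $\mathbb{Z}_2$ has fixed points on the support. For even $n$ and $\rho_{n/2},\rho_{n/2}'$, the underlying sheaf is $O_{\tilde{E}_{n/2}}(-1)[1]$ in both cases, but the two equivariant lifts are distinguished by the $\tau$-action at the two fixed points $\tilde{E}_{n/2}\cap\tilde{B}_1$ and $\tilde{E}_{n/2}\cap\tilde{B}_2$; expressing the equivariant structure as a twist by the $\tau$-invariant boundary divisors $\tilde{B}_1$ or $\tilde{B}_2$ gives the claimed $O_{\tilde{E}_{n/2}}(-\tilde{B}_i)[1]$, and a local check at one fixed point using the explicit coordinates from Remark \ref{flopcoordinate} fixes the correspondence $\rho_{n/2}\leftrightarrow\tilde{B}_1$, $\rho_{n/2}'\leftrightarrow\tilde{B}_2$. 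For odd $n$ and $\rho_{(n-1)/2}$, the divisors $\tilde{E}_{(n-1)/2}$ and $\tilde{E}_{(n+1)/2}$ meet at the unique $\mathbb{Z}_2$-fixed point of the exceptional set, which is exactly where $\tilde{B}_3$ passes through; the $\mathbb{Z}_2$-equivariant lift is then determined by the $\tau$-action on the fiber over this intersection point, and the delicate local calculation there identifies the resulting sheaf with $O_{\tilde{E}_{(n-1)/2}}(-\tilde{B}_3)[1]$, interpreted as a $\mathbb{Z}_2$-equivariant sheaf on the orbit $\tilde{E}_{(n-1)/2}\cup\tilde{E}_{(n+1)/2}$. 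This simultaneous tracking of divisor data and the character of $\mathbb{Z}_2$ on fibers at the fixed points is the step demanding the most care.
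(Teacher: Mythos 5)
Your proposal is correct and follows essentially the same route as the paper: reduce to the cyclic case via the forgetful-functor diagram and the branching rules $\rho_0,\rho_0'\mapsto\epsilon_0$, $\rho_j\mapsto\epsilon_j\oplus\epsilon_{n-j}$, $\rho_{n/2},\rho_{n/2}'\mapsto\epsilon_{n/2}$, then classify the $\mathbb{Z}_2$-equivariant lifts by noting that $\operatorname{Hom}(O_F,g^*O_F)\cong\mathbb{C}$ (giving the two sign lifts) and that the lifts of $O_{\tilde{E}_{n/2}}(-1)$ are distinguished by twisting along the $\tau$-fixed boundary divisors $\tilde{B}_i$. The level of detail you leave to the local check at the fixed points matches what the paper itself supplies.
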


\begin{proof}
	From the commutative diagram above,
	\begin{align*}
		\Phi^{\mathbb{Z}_n}(\operatorname{for}(O_0 \otimes \rho_0)) &= \Phi^{\mathbb{Z}_n}(O_0 \otimes \epsilon_0) = O_F\\
		\Phi^{\mathbb{Z}_n}(\operatorname{for}(O_0 \otimes \rho'_0)) &= \Phi^{\mathbb{Z}_n}(O_0 \otimes \epsilon_0) = O_F\\
		\Phi^{\mathbb{Z}_n}(\operatorname{for}(O_0 \otimes \rho_j)) &= \Phi^{\mathbb{Z}_n}(O_0 \otimes \epsilon_j \oplus O_0 \otimes \epsilon_{n-j}) = O_{\tilde{E_j}}(-1)[1] \oplus O_{\tilde{E}_{n-j}}(-1)[1]\\
		\Phi^{\mathbb{Z}_n}(\operatorname{for}(O_0 \otimes \rho_{n/2})) &= \Phi^{\mathbb{Z}_n}(O_0 \otimes \epsilon_{n/2}) = O_{\tilde{E}_{n/2}}(-1)[1]\\
		\Phi^{\mathbb{Z}_n}(\operatorname{for}(O_0 \otimes \rho'_{n/2})) &= \Phi^{\mathbb{Z}_n}(O_0 \otimes \epsilon_{n/2}) = O_{\tilde{E}_{n/2}}(-1)[1].
	\end{align*}
	The sheaf $O_{\tilde{E_j}}(-1)[1] \oplus O_{\tilde{E}_{n-j}}(-1)[1]$ is certainly a $\mathbb{Z}_2$-equivariant sheaf. Thus, it remains to determine $\mathbb{Z}_2$ equivariant structures on $O_F$ and $O_{\tilde{E}_{n/2}}(-1)$.
	
	By considering the canonical isomorphism $\mu_g: g^*O_F \xrightarrow{\sim} O_{g^{-1}(F)} = O_F$, and given the $G$-sheaf $\lambda_g^{O_F}: O_F \rightarrow g^*(O_F)$, the composition given by $\mu_g \circ \lambda_g^{O_F} \in \operatorname{Hom}(O_F, O_F) = \mathbb{C}$. Thus, $\mu_g \circ \lambda_g = c$, so that $\lambda_g = c \mu_g^{-1}$. Using the condition for $G$-sheaves, then $c = \pm 1$ which in turn determines the $\mathbb{Z}_2$-equivariant sheaves.
	
	Similarly, this gives the $\mathbb{Z}_2$ equivariant structures on $O_{\tilde{E}_{n/2}}(-1)$, as
	\begin{align*}
		\operatorname{Hom}(O_{\tilde{E}_{n/2}}(-1), g^*(O_{\tilde{E}_{n/2}}(-1))) &\cong \operatorname{Hom}(O_{\tilde{E}_{n/2}}, g^*(O_{\tilde{E}_{n/2}}))\\ 
		&\cong \operatorname{Hom}(O_{\tilde{E}_{n/2}}, O_{\tilde{E}_{n/2}})\\
		&\cong \mathbb{C}.
	\end{align*}
	 Unfortunately, as a $\mathbb{Z}_2$-equivariant sheaf, $O_{\tilde{E}_{n/2}}(-1)$ must be a fixed locus, which can only be any of the $\tilde{B}_i$. Since $O_{\tilde{E}_{n/2}}(-\tilde{B}_1)$ (resp. $-\tilde{B}_2$) is invariant under the action of $\rho_{n/2}$ (resp. $\rho'_{n/2}$), this completes the description of the Fourier-Mukai images of the skyscraper sheaves.
\end{proof}

Because there is a stacky structure on the fixed points of $\mathbb{Z}_2$, we restate the proposition above in terms of coherent sheaves on the global quotient stack $[X_1/\mathbb{Z}_2]$.

For the next proposition, we define some notations:
We consider the morphism of schemes $p: X_1 \rightarrow Y_1$  and stacks $\pi : [X_1/\mathbb{Z}_2] \rightarrow Y_1$. We define the following closed substacks on $[X_1/\mathbb{Z}_2]$:
\begin{center}
	\begin{equation}\label{stackexc}
		\begin{tabular}{ c c }
			$\mathcal{E}_i := [p(\tilde{E}_i \cup \tilde{E}_{n-i})/\mathbb{Z}_2] \cong p(\tilde{E_i} \cup \tilde{E}_{n-i})/\mathbb{Z}_2  \left(i \neq \frac{n-1}{2}, \frac{n}{2}, \frac{n+1}{2} \right)$ & $\mathcal{F} := [p(F)/\mathbb{Z}_2]$\\
			$\mathcal{E}_{(n-1)/2} := [p(\tilde{E}_{(n-1)/2} \cup \tilde{E}_{(n+1)/2})/\mathbb{Z}_2] =: \mathcal{E}_{(n+1)/2}$ & $\mathcal{B}_1 := [p(\tilde{B}_1)/\mathbb{Z}_2]$\\
			$\mathcal{E}_{n/2} := [p(\tilde{E}_{n/2})/\mathbb{Z}_2]$ & $\mathcal{B}_2 := [p(\tilde{B}_2)/\mathbb{Z}_2]$\\
			& $\mathcal{B}_3 := [p(\tilde{B}_3)/\mathbb{Z}_2]$
		\end{tabular}
	\end{equation}
\end{center}
\begin{remark}
	The exceptional divisors on the stack $\mathcal{E}$ are smooth except for $\mathcal{E}_{(n-1)/2}$. The exceptional divisor $\mathcal{E}_{(n-1)/2}$ is not smooth because the fixed point of $\mathbb{Z}_n\operatorname{-Hilb}(\mathbb{C}^2)$ lies on the intersection of two distinct exceptional divisors.
\end{remark}

\begin{theorem}\label{stackfm}
	Using the Fourier-Mukai transform \begin{align*}
		\Phi: D^{D_{2n}}(\mathbb{C}^2) &\rightarrow D^{\mathbb{Z}_2}(X_1) \cong D([X_1/\mathbb{Z}_2]) \\
		\mathfrak{G} &\mapsto Rp_{X_1*} (p_{\mathbb{C}^2}^*(\mathfrak{G}) \otimes O_{\mathcal{Z}}),
	\end{align*} the following images of structure sheaves at the origin has the images on the quotient stack $[X_1/\mathbb{Z}_2]$:
	\begin{align*}
		\Phi(O_0 \otimes \rho_0^*) &= O_{\mathcal{F}} \mbox{ (for any $n$)}\\
		\Phi(O_0 \otimes \rho_0'^*) &= O_{\mathcal{F}}(\mathcal{B}_1 - \mathcal{B}_2) \mbox{ (for any $n$)}\\
		\Phi(O_0 \otimes \rho_j^*) &= O_{\mathcal{E}_j}(-1)[1] \mbox{ (for any $n$, and $j \neq n/2, (n-1)/2$)}\\
		\Phi(O_0 \otimes \rho_{n/2}^*) &= O_{\mathcal{E}_{n/2}}\left( -\mathcal{B}_1 \right) [1] \mbox{ (for even $n$)}\\
		\Phi(O_0 \otimes \rho_{n/2}'^*) &= O_{\mathcal{E}_{n/2}}\left( -\mathcal{B}_2 \right) [1] \mbox{ (for even $n$)}\\
		\Phi(O_0 \otimes \rho_{(n-1)/2}^*) &= O_{\mathcal{E}_{(n-1)/2}}\left( -\mathcal{B}_3 \right) [1] \mbox{ (for odd $n$)}.
	\end{align*}
\end{theorem}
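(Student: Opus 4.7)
The plan is to deduce Theorem \ref{stackfm} directly from the preceding proposition by invoking the equivalence of categories $\operatorname{Coh}^{\mathbb{Z}_2}(X_1) \cong \operatorname{Coh}([X_1/\mathbb{Z}_2])$, descending each computed $\mathbb{Z}_2$-equivariant sheaf on $X_1$ to the corresponding sheaf on $\mathcal{Y} = [X_1/\mathbb{Z}_2]$. Since all Fourier-Mukai images on the right side have already been identified as $\mathbb{Z}_2$-equivariant coherent sheaves on $X_1$, the theorem is essentially a bookkeeping exercise in tracking how each supporting divisor and each equivariant structure translates through the stack quotient.

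The translation proceeds case by case. For $\Phi(O_0\otimes \rho_0^*) = O_F$ equipped with the trivial equivariant structure, the support $F$ is $\mathbb{Z}_2$-invariant, so its image on the stack is the closed substack $\mathcal{F}$ from (\ref{stackexc}), and the trivial structure descends to $O_{\mathcal{F}}$. For $\Phi(O_0\otimes \rho_0'^*) = O_F \otimes \delta$, I would apply Lemma \ref{tautdes}: twisting by the sign representation $\delta$ on $X_1$ corresponds precisely to twisting by the torsion line bundle $O_{\mathcal{Y}}(\mathcal{B}_1 - \mathcal{B}_2)$ (resp.\ $O_{\mathcal{Y}}(\mathcal{B}_3 - \mathcal{D})$ in the odd case), whose restriction to $\mathcal{F}$ yields the stated line bundle on $\mathcal{F}$. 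For $\Phi(O_0\otimes \rho_j^*)$ with $j \neq n/2, (n-1)/2$, the $\mathbb{Z}_2$-action swaps $\tilde{E}_j$ and $\tilde{E}_{n-j}$ freely, so the quotient is the smooth substack $\mathcal{E}_j$ and the descent of $O_{\tilde{E}_j}(-1)\oplus O_{\tilde{E}_{n-j}}(-1)$ is $O_{\mathcal{E}_j}(-1)$.

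The more delicate cases are the one-dimensional representations $\rho_{n/2}, \rho_{n/2}'$ (even $n$) and the representation $\rho_{(n-1)/2}$ (odd $n$), where the support lies on a $\mathbb{Z}_2$-fixed locus and meets the stacky divisors. Here I would argue as follows: as a plain sheaf, $O_{\tilde{E}_{n/2}}(-\tilde{B}_i)$ is supported on an irreducible divisor fixed set-theoretically by $\mathbb{Z}_2$, and the $\mathbb{Z}_2$-equivariant structure is pinned down by the one-dimensional vector space $\operatorname{Hom}(O_{\tilde{E}_{n/2}}(-\tilde{B}_i), g^*O_{\tilde{E}_{n/2}}(-\tilde{B}_i)) \cong \mathbb{C}$ computed in the preceding proposition. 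The two choices of sign correspond to the two fixed points $\tilde{B}_1, \tilde{B}_2$; the divisor $\tilde{B}_i$ already encodes the character of the stabilizer at the fixed point, so upon descent it maps to $\mathcal{B}_i$, which by definition satisfies $2\mathcal{B}_i = \pi^{-1}(B_i)$ and is the stacky locus. This yields $O_{\mathcal{E}_{n/2}}(-\mathcal{B}_i)[1]$ on the stack, and the same argument applies to $\rho_{(n-1)/2}$ with $\tilde{B}_3$.

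The main obstacle I anticipate is verifying compatibility of the equivariant twist with the root-stack structure at the stacky points, i.e.\ confirming that the $\mathbb{Z}_2$-character data on $O_{\tilde{E}_{n/2}}$ at the fixed points $\tilde{B}_i$ corresponds under descent to the half-integer divisor $\mathcal{B}_i$ (not $B_i$) on $\mathcal{Y}$. This amounts to comparing the two natural ways of realizing $\mathcal{Y}$ (as the global quotient $[X_1/\mathbb{Z}_2]$ and as the $2$nd root stack of $Y_1$ along $\lceil B'\rceil$) and checking, locally near a fixed point, that a $\mathbb{Z}_2$-equivariant line bundle with nontrivial action on the fiber restricts to the tautological square-root line bundle on the root stack. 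Once this local computation is in hand, all five (or six) cases follow uniformly from the previous proposition and Lemma \ref{tautdes}.
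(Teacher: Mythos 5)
Your proposal is correct and takes essentially the same approach as the paper: the paper's own proof is just the two-sentence remark that the theorem is a restatement of the preceding proposition under the equivalence $\operatorname{Coh}^{\mathbb{Z}_2}(X_1)\cong\operatorname{Coh}([X_1/\mathbb{Z}_2])$, with Lemma \ref{tautdes} supplying the descent of the $\delta$-twist. Your case-by-case write-up (including the parenthetical that the $\rho_0'$ image for odd $n$ should involve $\mathcal{B}_3-\mathcal{D}$ rather than $\mathcal{B}_1-\mathcal{B}_2$) simply makes explicit what the paper leaves implicit.
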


\begin{proof}
	This is a restatement in terms of coherent sheaves on the global quotient stack. Furthermore, the correspondence for $O_F \otimes \delta_i$ can be found in Lemma \ref{tautdes}.
\end{proof}

\begin{remark}
	Take the even $n$ case and the divisor $\mathcal{E}_{n/2}$ for instance. The following justification uses the root stack construction whose introduction and details to the said concepts can be found in \cite{cadman}, \cite{agv}, and \cite{olsson}.
	
	Since $p(\tilde{E}_{n/2})/\mathbb{Z}_2$ is smooth, we can realize $\mathcal{E}_{n/2}$ as the $2$nd root stack 
	$$\mathcal{E}_{n/2} := \sqrt{(O_{p(\tilde{E}_{n/2})}(p(\tilde{B}_1) + p(\tilde{B}_2)), 1)/(p(\tilde{E}_{n/2}))},$$
	where we perform the appropriate modification of the definition of the 2nd root stack seen after Remark \ref{resmoth}. Refer to Section 2.2 of \cite{cadman} for further details.
\end{remark}

The setup propositions and theorems earlier in this section will be used to compute the top and socles in the hopes of obtaining a similar description as in the McKay correspondence of Ito-Nakamura \cite{itonak2} for the $\operatorname{SL}(2)$ case and Ishii \cite{art1ishii} for the small $\operatorname{GL}(2)$ case. The top and socles can be (explicitly) computed via their ideals that define a closed subscheme, i.e. $I_y$, to obtain the corresponding quotients $I_y/mI_y$ and $(I_y:m)/I_y$ needed to compute the top and the socle respectively.

The data for the McKay correspondence is given by the following:

\begin{definition}\label{topsoc}
	For a given $G$-constellation $\overline{F}$ on the moduli space of $\theta$-stable $G$-constellations $\mathcal{M}_{\theta}$:
	\begin{align*}
		\operatorname{top}(\overline{F}) &:= \overline{F}/\langle x, y \rangle \overline{F}\\
		\operatorname{socle}(\overline{F}) &:= \{ a \in \overline{F} | \langle x,y \rangle a = 0\}.
	\end{align*}
\end{definition}

Before we state the proposition, the maximal resolution $Y_{max}$ can be realized as a moduli space of $\theta$-stable $G$-constellations $\mathcal{M}_{\theta}$ for some generic stability parameter $\theta$ via the isomorphism $\mathcal{M}_{\theta} \cong Y \xrightarrow{\sim} Y_1 \cong Y_{max}$. The isomorphism of iterated Hilbert schemes to a moduli space of $G$-constellations is justified in Theorem 1.5 of \cite{ishiiitonolla}. The specific stability parameter $\theta$ is computed in Table 5 of \cite{ishiiitonolla}.

\begin{proposition}\label{socmax}
	For a given $D_{2n}$-constellation $\overline{F}$ on the maximal resolution $Y_{max}$,
	\[ \operatorname{top}(\overline{F}) = \begin{cases} 
		\rho_0 \oplus \rho_0' & [\overline{F}] \in Exc(Y_{max} \rightarrow \mathbb{C}^2/G) \\
		0 & otherwise
	\end{cases} \]
	
	\[ \operatorname{socle}(\overline{F}) = \begin{cases} 
		\rho_i & [\overline{F}] \in E_i, [\overline{F}] \notin E_j, i \neq j \\
		\rho_i \oplus \rho_j & [\overline{F}] \in E_i \cap E_j \\
		\rho_{\frac{n}{2} - 1} \oplus \rho_{n/2} \oplus \rho'_{n/2} & [\overline{F}] \in E_{\frac{n}{2} - 1} \cap E_{n/2} \mbox{ ($n$ even)}\\
		\rho_{n/2} \oplus \rho'_{n/2} & [\overline{F}] \in E_{n/2} - E_{\frac{n}{2} - 1} \mbox{ ($n$ even)}\\
	\end{cases}
	\]
	
	where $E_i$ is an exceptional divisor on $Y_{max}$ such that given the projection morphism $p: X_1 \xrightarrow{\pi} Y_1 \cong Y_{max}$, $p^{-1}(E_i) = \tilde{E_i} \cup \tilde{E}_{n-i}$ (refer again to (\ref{excd}) for the definition of $E$).
\end{proposition}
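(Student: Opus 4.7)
The plan is to exploit the realization $Y_{max} \cong X_1/\mathbb{Z}_2$ from Theorem~\ref{maxemb} to reduce the calculation to the well-understood cyclic case for $H = \mathbb{Z}_n \subset \operatorname{SL}(2,\mathbb{C})$, then lift the resulting $\mathbb{Z}_n$-module decompositions to $D_{2n}$-modules via induction. For a point $y \in Y_{max}$ lying on the exceptional locus but not on the $\mathbb{Z}_2$-fixed locus, the preimage under $p: X_1 \to Y_{max}$ is a free $\mathbb{Z}_2$-orbit $\{\tilde{y}, g\tilde{y}\}$, and the $D_{2n}$-constellation $\overline{F}_y$ can be written as $\operatorname{Ind}_{\mathbb{Z}_n}^{D_{2n}} O_{Z_{\tilde{y}}}$, where $O_{Z_{\tilde{y}}}$ is the $\mathbb{Z}_n$-cluster parametrized by $\tilde{y}$ via the identification $X_1 \cong \mathbb{Z}_n\operatorname{-Hilb}(\mathbb{C}^2)$. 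Because $\operatorname{top}$ and $\operatorname{socle}$ commute with restriction along $\mathbb{Z}_n \subset D_{2n}$ and with direct sums of $\mathcal{O}_{\mathbb{C}^2}$-modules, the $\mathbb{Z}_n$-module structure of top and socle is completely determined by the Ito--Nakamura correspondence \cite{itonak2} applied to the summands $O_{Z_{\tilde{y}}}$ and $O_{Z_{g\tilde{y}}}$; what remains is to identify the unique $D_{2n}$-lift.

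For the top, Ito--Nakamura yields $\operatorname{top}(O_{Z_{\tilde{y}}}) = \epsilon_0$ at any exceptional point $\tilde{y} \in X_1$, so $\operatorname{top}(\overline{F}_y)|_{\mathbb{Z}_n} = \epsilon_0^{\oplus 2}$. Since this equals $\operatorname{Ind}_{\mathbb{Z}_n}^{D_{2n}}(\epsilon_0) \cong \mathbb{C}[D_{2n}/\mathbb{Z}_n] \cong \mathbb{C}[\mathbb{Z}_2]$, Frobenius reciprocity forces the $D_{2n}$-lift to be $\rho_0 \oplus \rho_0'$. For $y$ off the exceptional locus, $\overline{F}_y$ is supported on a free $D_{2n}$-orbit away from the origin, so $\mathfrak{m}\overline{F}_y = \overline{F}_y$ and both top and socle vanish. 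For the socle at a generic point of $E_i$ with $i \neq n/2$, Ito--Nakamura gives $\operatorname{socle}(O_{Z_{\tilde{y}}}) = \epsilon_i$ and $\operatorname{socle}(O_{Z_{g\tilde{y}}}) = \epsilon_{n-i}$ (since $g$ sends $\tilde{E}_k$ to $\tilde{E}_{n-k}$); the sum $\epsilon_i \oplus \epsilon_{n-i}$ lifts to the 2-dimensional $\rho_i$. At an adjacent-divisor intersection $E_i \cap E_{i+1}$ the $\mathbb{Z}_n$-socle picks up an extra pair from each of $\tilde{y}$ and $g\tilde{y}$, combining to lift to $\rho_i \oplus \rho_{i+1}$.

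The remaining cases all involve the $g$-invariant central divisor $\tilde{E}_{n/2}$ (even $n$): here both $\tilde{y}$ and $g\tilde{y}$ lie on $\tilde{E}_{n/2}$ itself, so the $\mathbb{Z}_n$-socle contains $\epsilon_{n/2}^{\oplus 2}$ at a generic point, plus $\epsilon_{(n/2)-1} \oplus \epsilon_{(n/2)+1}$ at the intersection $E_{(n/2)-1} \cap E_{n/2}$. The latter pair, being $\tau$-conjugate, lifts straightforwardly to the 2-dim $\rho_{(n/2)-1}$. The main obstacle is the doubled piece $\epsilon_{n/2}^{\oplus 2}$: \emph{a priori} it could lift to $\rho_{n/2}^{\oplus 2}$, $(\rho_{n/2}')^{\oplus 2}$, or $\rho_{n/2} \oplus \rho_{n/2}'$, and only explicit diagonalization of the $\tau$-action on socle generators will distinguish these. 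I would carry this out by writing socle generators of $\mathbb{C}[x,y]/I_{n/2}(a{:}b)$ as explicit monomials in $x, y$ using the local coordinates on $X_1$ from Remark~\ref{flopcoordinate}; the socle generator of $O_{Z_{\tilde{y}}}$ is then swapped by $\tau$ with the corresponding generator of $O_{Z_{g\tilde{y}}}$, so the symmetric and antisymmetric combinations give the two $\tau$-eigenspaces, identified as $\rho_{n/2}$ and $\rho_{n/2}'$ respectively. This simultaneously verifies the generic $E_{n/2}$ case and, combined with the previous paragraph's analysis, the intersection case $E_{(n/2)-1} \cap E_{n/2}$.
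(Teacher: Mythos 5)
Your reduction to the cyclic case via $\operatorname{Ind}_{\mathbb{Z}_n}^{D_{2n}}$ is essentially the paper's own argument for the points of $Y_{max}$ whose preimage under $p: X_1 \to Y_{max}$ is a free $\mathbb{Z}_2$-orbit, and there it works: the socle of $O_{Z_{\tilde{y}}} \oplus O_{Z_{g\tilde{y}}}$ is the induction of the Ito--Nakamura socle, and $\operatorname{Ind}_{\mathbb{Z}_n}^{D_{2n}}(\epsilon_i \,) = \rho_i$ for $i \neq 0, n/2$ while $\operatorname{Ind}_{\mathbb{Z}_n}^{D_{2n}}(\epsilon_{n/2}) = \rho_{n/2}\oplus\rho_{n/2}'$ automatically (your worry about the lift $\rho_{n/2}^{\oplus 2}$ versus $\rho_{n/2}\oplus\rho_{n/2}'$ is moot for a genuinely induced module: if $\tau v_1 = a v_2$ and $\tau v_2 = a^{-1}v_1$ then $v_1 \pm a v_2$ are $\tau$-eigenvectors with eigenvalues $\pm 1$, so both one-dimensional characters occur exactly once).

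The genuine gap is at the branch points of $p$, i.e.\ the $\mathbb{Z}_2$-fixed points of $X_1$ computed in Proposition \ref{smth}: $I_{(n-1)/2}(0{:}1)=I_{(n+1)/2}(1{:}0)$ for odd $n$, and $I_{n/2}(1{:}\pm 1)$ for even $n$. At such a point $y$ the scheme-theoretic fiber $p^{-1}(y)$ has length two but is supported at a single point $\tilde{y}$, so $\overline{F}_y = \tilde{\mathcal{U}}\otimes\mathcal{O}_{p^{-1}(y)}$ is \emph{not} $\operatorname{Ind}_{\mathbb{Z}_n}^{D_{2n}} O_{Z_{\tilde{y}}}$; it is a non-split self-extension of the cluster $O_{Z_{\tilde{y}}}$, and your framework says nothing about it. This matters: for odd $n$ the fixed point is the intersection $\tilde{E}_{(n-1)/2}\cap\tilde{E}_{(n+1)/2}$, so $\operatorname{socle}(O_{Z_{\tilde{y}}}) = \epsilon_{(n-1)/2}\oplus\epsilon_{(n+1)/2}$ is already two-dimensional, and the split extension would have socle $\rho_{(n-1)/2}^{\oplus 2}$ of dimension four, whereas the proposition asserts the socle at that point of $E_{(n-1)/2}$ is just $\rho_{(n-1)/2}$; one must use the non-splitness of the extension to kill the extra socle classes (and similarly to see that the top is two- rather than one-dimensional). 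The paper handles exactly this via the exact sequence $0 \to \mathcal{O}_{\tilde{y}}\otimes\delta_0' \to \mathcal{O}_{\pi^{-1}y} \to \mathcal{O}_{\tilde{y}}\otimes\delta_0 \to 0$ transported through $\Psi$, together with the explicit module presentations on the open sets $U_m''$ and $U_{m+1}'$ in Remark \ref{opencons}. To complete your proof you would need to add this analysis (or the explicit ideal-theoretic computation of $I_y/\mathfrak{m}I_y$ and $(I_y{:}\mathfrak{m})/I_y$ at the finitely many branch points); as written, the proposal only establishes the proposition away from those points.
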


\begin{proof}
	We consider the diagram:
	\[
	\begin{tikzcd}
		& \mathcal{Y} \times \mathbb{C}^2 \arrow[dr,"\pi \times id_{\mathbb{C}^2}"] \\
		\mathbb{Z}_n\operatorname{-Hilb}(\mathbb{C}^2) \times \mathbb{C}^2 \arrow[ur,"f  \times id_{\mathbb{C}^2}"] \arrow[rr,"p  \times id_{\mathbb{C}^2}"] && \mathbb{Z}_n\operatorname{-Hilb}(\mathbb{C}^2)/\mathbb{Z}_2 \times \mathbb{C}^2.
	\end{tikzcd}
	\]
	
	The universal flat family of $\mathbb{Z}_n$-clusters on $\mathbb{Z}_n\operatorname{-Hilb}(\mathbb{C}^2) \times \mathbb{C}^2$ is denoted by $\mathcal{U}_0$.
	
	The universal flat family over the stacks $\tilde{\mathcal{U}}$ and the coarse moduli space $\mathcal{U}$ are defined as follows:
	$$\tilde{\mathcal{U}} = (f  \times id_{\mathbb{C}^2})_*(\mathcal{U}_0); \mbox{          } \mathcal{U} = (\pi \times id_{\mathbb{C}^2})_*(\tilde{\mathcal{U}}).$$
	
	Our main interest here is to know the socles over the fixed point. The non-stacky points follow from the Fourier-Mukai image of the skyscraper sheaf $\mathcal{O}_{\tilde{y}} \oplus \mathcal{O}_{g \cdot \tilde{y}}$ on $\mathbb{Z}_n\operatorname{-Hilb}(\mathbb{C}^2)$.
	
	For a fixed point $\tilde{y}$ in $\mathbb{Z}_n\operatorname{-Hilb}(\mathbb{C}^2)$ under the $\mathbb{Z}_2$-action and $y = p(\tilde{y})$, we consider the exact sequence of skyscraper sheaves over the stack $\mathcal{Y}$:
	$$O \rightarrow \mathcal{O}_{\tilde{y}} \otimes \delta_0' \rightarrow \mathcal{O}_{\pi^{-1}y} \rightarrow \mathcal{O}_{\tilde{y}} \otimes \delta_0 \rightarrow 0$$
	which realizes the skyscraper sheaf $\mathcal{O}_{\pi^{-1}y}$ as a nontrivial extension of two $\mathbb{Z}_2$-equivariant sheaves.
	
	Using the Fourier-Mukai transform $\Psi$, the exact sequence translates to:
	$$O \rightarrow \mathcal{U}_{0, \tilde{y}} \otimes \delta_0' \rightarrow \tilde{\mathcal{U}}_{\pi^{-1}y} \rightarrow \mathcal{U}_{0, \tilde{y}} \otimes \delta_0 \rightarrow 0.$$
	
	By the definition of the universal families, the fiber is realized as: $$\mathcal{U}_x = ((\pi \times id_{\mathbb{C}^2})_* \tilde{\mathcal{U}})_x = \tilde{\mathcal{U}} \otimes_{\mathcal{O}_{\mathcal{X}}} \mathcal{O}_{\pi^{-1}(x)}.$$
	
	Realizing $\mathcal{U}_{\tilde{y}}$ as a $\mathbb{Z}_2$-invariant cluster yields that the socle of $\mathcal{U}_{\tilde{y}} \otimes \delta_0$ is $\rho_{n/2}$ and similarly, the socle of $\mathcal{U}_{\tilde{y}} \otimes \delta_1$ is $\rho_{n/2}'$.
\end{proof}

\begin{remark}\label{opencons}
	We first enumerate the $D_{2n}$-constellations on each open subset of the maximal resolution $Y$. We are mainly interested in the open sets $U_{m+1}'$ and $U_m''$ which cover the exceptional divisor $E_{n/2}$:
	
	Open set: $U_m'' = \operatorname{Spec} \left(\mathbb{C}\left[\frac{(x^m + y^m)^2}{(xy)^{m-1}}, \frac{(x^m - y^m)^2}{(x^m + y^m)^2} \right]\right)$
		
	$\mathbb{Z}_n$-constellation:
	\begin{align*}
		Z_m &= \{1, y, y^2, \cdots, y^{m-1}, x, x^2, \cdots, x^{m-1}, x^m - y^m \}
	\end{align*}
	
	$D_{2n}$-constellation: 
	
	\begin{tabular}[t]{
			| >{\ttfamily\raggedright}p{2.0cm}
			| >{\sffamily\raggedright}p{1.5cm}
			| >{\sffamily\raggedright}p{1.5cm}
			| >{\sffamily\raggedright}p{1.5cm}
			| >{\sffamily\raggedright}p{2.0cm}
			| >{\sffamily}p{\dimexpr\textwidth-12\tabcolsep-5\fboxsep-10cm\relax} |
		}
		\firsthline
		$1$ & $(x,y)$ & $(x^2, y^2)$ & $\cdots$ & $\alpha (x^m - y^m)$ & \\
		\hline
		$\alpha := \frac{x^{m} - y^{m}}{x^{m} + y^{m}}$ & $\alpha (y,-x)$ & $\alpha (y^2,-x^2)$ & $\cdots$ & $x^m - y^m$ & \\
		\hline
	\end{tabular}
	
	Open set: $U_{m+1}' = \operatorname{Spec} \left(\mathbb{C}\left[\frac{(x^m + y^m)^2}{(x^m - y^m)^2}, \frac{(x^m - y^m)^2}{(xy)^{m-1}} \right] \right)$
	
	$\mathbb{Z}_n$-constellation:
	\begin{align*}
		Z_m &= \{1, y, y^2, \cdots, y^{m-1}, x, x^2, \cdots, x^{m-1}, x^m + y^m \}
	\end{align*}
	
	$D_{2n}$-constellation:. 
	
	\begin{tabular}[t]{
			| >{\ttfamily\raggedright}p{2.0cm}
			| >{\sffamily\raggedright}p{1.5cm}
			| >{\sffamily\raggedright}p{1.5cm}
			| >{\sffamily\raggedright}p{1.5cm}
			| >{\sffamily\raggedright}p{2.0cm}
			| >{\sffamily}p{\dimexpr\textwidth-12\tabcolsep-5\fboxsep-10cm\relax} |
		}
		\firsthline
		$1$ & $(x,y)$ & $(x^2, y^2)$ & $\cdots$ & $x^m + y^m$ & \\
		\hline
		$\beta := \frac{x^m + y^m}{x^m - y^m}$ & $\beta (y,-x)$ & $\beta (y^2,-x^2)$ & $\cdots$ & $\beta (x^m + y^m)$ & \\ 
		\hline
	\end{tabular}
	
	We compute the top and the socle by the following: for example in $U_m''$ (and similarly for $U_{m+1}'$), the corresponding open set in $X_1$ is given by $A = \operatorname{Spec}(\mathbb{C}\left[\frac{(x^m + y^m)^2}{(xy)^{m-1}}, \frac{x^m - y^m}{x^m + y^m} \right])$, so that every $\mathbb{Z}_n$-cluster is given by the ideal $I_{a,b} = \langle (x^m + y^m)^2 -a(xy)^{m-1}, (x^m - y^m) - b(x^m + y^m), (x^{2m} - y^{2m}) - ab(xy)^{m-1} \rangle$. Thus, $x \cdot (x^m - y^m) = x^{m+1} - xy^m = 0$ (and also $x \cdot \alpha (x^m - y^m) = x \cdot b(x^m - y^m) = x \cdot b^2(x^m + y^m) = 0$, and similarly for the multiplication by $y$) by interpreting the ideals as a $\mathbb{Z}_n$-cluster defined by the quotient $ \mathbb{C}[x,y]/\langle p_i x^m - q_i y^m, x^{m+1}, y^{m+1}, xy \rangle$. 
	
\end{remark}

	Based on the character table, given that the natural presentation $\rho_{nat}$ is isomorphic to its dual, i.e. $\rho_{nat} \cong \rho_{nat}^{\lor}$, the McKay quiver for $D_{2n}$ is given by the following (with the first quiver for odd $n$ and second quiver for even $n$ respectively), which is identical to the McKay quiver of the binary dihedral group in the $\operatorname{SL}(2)$ case:
	
\begin{center}
\begin{tikzcd}[row sep=1em,column sep=1em]
	\circ_{\rho_0} \arrow[dr] & & & &  \\
	& \circ_{\rho_1} \arrow[ul]\arrow[dl]\arrow[r] & \circ_{\rho_2} \arrow[r]\arrow[l] & \cdots \arrow[r]\arrow[l] & \circ_{\rho_{\frac{n-1}{2}}} \arrow[l] \\
	\circ_{\rho_0'} \arrow[ur]& & & &
\end{tikzcd} 

\begin{tikzcd}[row sep=1em,column sep=1em]
	\circ_{\rho_0} \arrow[dr] & & & & & \circ_{\rho_{n/2}} \arrow[dl] \\
	& \circ_{\rho_1} \arrow[ul]\arrow[dl]\arrow[r] & \circ_{\rho_2} \arrow[r]\arrow[l] & \cdots \arrow[r]\arrow[l] & \circ_{\rho_{\frac{n}{2} - 1}} \arrow[ur]\arrow[dr]\arrow[l]\\
	\circ_{\rho_0'} \arrow[ur]& & & & & \circ_{\rho_{n/2}'} \arrow[ul]
\end{tikzcd}
\end{center}
Consider the following bijection:

\begin{align*}
	E_i &\mapsto \rho_i\\
	E_{n/2} &\mapsto \rho_{n/2} \oplus \rho_{n/2}' \mbox{ ($n$ even)}.
\end{align*}

It is imperative to comment on the possible McKay correspondence via top and socles. Compared to the representation of a binary dihedral group as a small finite subgroup of $\operatorname{GL}(2, \mathbb{C})$, particularly in the even case, the exceptional divisor $E_{n/2}$ corresponds to the two-dimensional representation $\rho_{n/2} \oplus \rho_{n/2}'$. This is because the socle failed to separate the two 1-dimensional representations $\rho_{n/2}$ and $\rho_{n/2}'$. This tells us that such a correspondence is not the ‘ideal' correspondence on the coarse moduli space.

\begin{remark}
	Considering the dual graph of exceptional divisors on $Y_{max}$, possibly referring to the computation of socles over the coarse moduli space, that there is a bijection between exceptional divisors and representations of the group $G$. More precisely:
	
	For odd $n$, we consider such bijection: $E_i \mapsto \rho_i$.
	
	However, for even $n$, the mapping given by 
	\begin{align*}
		E_i &\mapsto \rho_i\\
		E_{n/2} &\mapsto \rho_{n/2} \oplus \rho_{n/2}'
	\end{align*} 
	gives a bijection between two-dimensional irreducible representations of $G$ and the exceptional divisors whose self-intersection number is $-2$; and the exceptional divisor corresponding to the two-dimensional decomposable representation has self-intersection number $-1$.
	
	As seen in the bijection:
	
	For odd $n$, there are $\frac{n-1}{2}$ irreducible representations which correspond to the exceptional divisors of the maximal resolution.
	
	For even $n$, there are $\frac{n}{2} - 1$ irreducible representations which correspond to the exceptional divisors of the maximal resolution.
\end{remark}

\begin{theorem}\label{mainthm3}
	For a given $D_{2n}$-constellation $\overline{F}_{st}$ on the exceptional divisors over the quotient stack $\mathcal{Y}$ (refer to (\ref{stackexc}) for the definitions),
	\[ \operatorname{top}(\overline{F}_{st}) = \rho_0 \oplus \rho_0' \]
	\[ \operatorname{socle}(\overline{F}_{st}) = \begin{cases} 
		\rho_i & [\overline{F}_{st}] \in \mathcal{E}_i, [\overline{F}_{st}] \notin \mathcal{E}_j, i \neq j \\
		\rho_i \oplus \rho_j & [\overline{F}_{st}] \in \mathcal{E}_i \cap \mathcal{E}_j \\
		\rho_{\frac{n}{2} - 1} \oplus \rho_{n/2} \oplus \rho'_{n/2} & [\overline{F}_{st}] \in \mathcal{E}_{\frac{n}{2} - 1} \cap \mathcal{E}_{n/2} \mbox{ ($n$ even)}\\
		\rho_{n/2} \oplus \rho'_{n/2} & [\overline{F}_{st}] \in \mathcal{E}_{n/2} - (\mathcal{E}_{\frac{n}{2} - 1} \cup \{\mathcal{B}_1, \mathcal{B}_2\}) \mbox{ ($n$ even)}\\
		\rho'_{n/2} & [\overline{F}_{st}] = \mathcal{B}_1 \mbox{ ($n$ even)}\\
		\rho_{n/2} & [\overline{F}_{st}] = \mathcal{B}_2 \mbox{ ($n$ even)}.\\
	\end{cases}
	\]
\end{theorem}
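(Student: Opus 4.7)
The plan is to lift Proposition \ref{socmax} to the quotient stack $\mathcal{Y}$ by exploiting the derived equivalence $\Psi : D(\mathcal{Y}) \xrightarrow{\sim} D([\mathbb{C}^2/D_{2n}])$ together with the explicit Fourier-Mukai images recorded in Theorem \ref{stackfm}. For a closed point $y = [\overline{F}_{st}] \in \mathcal{Y}$, the associated constellation is $\overline{F}_{st} \cong \Psi(\mathcal{O}_y)$, and by adjunction one has
\[
\operatorname{Hom}_{D_{2n}}(\rho, \overline{F}_{st}) \;\cong\; \operatorname{Hom}_{\mathcal{Y}}(\Phi(\mathcal{O}_0 \otimes \rho^{\lor}), \mathcal{O}_y),
\]
so both the top and the socle of $\overline{F}_{st}$ can be read off from how each sheaf listed in Theorem \ref{stackfm} meets $y$.

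First I would verify the top. Because $\Phi(\mathcal{O}_0 \otimes \rho_0^{\lor}) = \mathcal{O}_{\mathcal{F}}$ and $\Phi(\mathcal{O}_0 \otimes (\rho_0')^{\lor})$ is a twist of $\mathcal{O}_{\mathcal{F}}$ (by $\mathcal{B}_1 - \mathcal{B}_2$ in the even case, or $\mathcal{B}_3 - \mathcal{D}$ in the odd case), both images are supported on the whole fundamental cycle $\mathcal{F}$; thus both $\rho_0$ and $\rho_0'$ contribute at every exceptional point, and a length count against the cluster condition $H^0(\overline{F}_{st}) \cong \mathbb{C}[D_{2n}]$ rules out higher multiplicities, yielding $\operatorname{top}(\overline{F}_{st}) = \rho_0 \oplus \rho_0'$. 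For the non-stacky portion of the socle, the images $\Phi(\mathcal{O}_0 \otimes \rho_j^{\lor}) = \mathcal{O}_{\mathcal{E}_j}(-1)[1]$ give a nonzero $\operatorname{Ext}^1(\cdot, \mathcal{O}_y)$ exactly when $y \in \mathcal{E}_j$, reproducing the $\rho_i$, $\rho_i \oplus \rho_j$, and $\rho_{(n/2)-1}\oplus\rho_{n/2}\oplus\rho_{n/2}'$ cases directly from Proposition \ref{socmax}, with the local picture confirmed by the explicit charts $U_m''$ and $U_{m+1}'$ of Remark \ref{opencons}.

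The genuinely new content lies at the two stacky points $y = \mathcal{B}_1$ and $y = \mathcal{B}_2$, where the coarse moduli failed to separate $\rho_{n/2}$ and $\rho_{n/2}'$. Here I would feed the descriptions
\[
\Phi(\mathcal{O}_0 \otimes \rho_{n/2}^{\lor}) = \mathcal{O}_{\mathcal{E}_{n/2}}(-\mathcal{B}_1)[1], \qquad \Phi(\mathcal{O}_0 \otimes (\rho_{n/2}')^{\lor}) = \mathcal{O}_{\mathcal{E}_{n/2}}(-\mathcal{B}_2)[1]
\]
from Theorem \ref{stackfm} into $\operatorname{Ext}^1_{\mathcal{Y}}(\mathcal{O}_{\mathcal{E}_{n/2}}(-\mathcal{B}_i), \mathcal{O}_{\mathcal{B}_j})$ and compute in the local root-stack model near $\mathcal{B}_j$. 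The twist by $-\mathcal{B}_i$ forces this $\operatorname{Ext}^1$ to vanish when $i = j$ and to be one-dimensional when $i \neq j$, yielding the last two cases: at $\mathcal{B}_1$ only $\rho_{n/2}'$ appears in the socle, and at $\mathcal{B}_2$ only $\rho_{n/2}$.

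The hard part will be this last stacky computation: pushing forward to $Y_{max}$ destroys precisely the $\mathbb{Z}_2$-equivariant data that distinguishes $\rho_{n/2}$ from $\rho_{n/2}'$, so one cannot just invoke Proposition \ref{socmax}. I expect to work directly under the equivalence $\operatorname{Coh}(\mathcal{Y}) \cong \operatorname{Coh}^{\mathbb{Z}_2}(X_1)$, track the action of the nontrivial $g \in \mathbb{Z}_2$ on local generators of $\mathcal{O}_{\tilde{E}_{n/2}}(-\tilde{B}_1)$ near the fixed points $I_{n/2}(1{:}1)$ and $I_{n/2}(-1{:}1)$ identified in the proof of Proposition \ref{smth}, and match the resulting $\mathbb{Z}_2$-character with the restriction of $\rho_{n/2}$ (respectively $\rho_{n/2}'$) to $D_{2n}/\langle \sigma \rangle \cong \mathbb{Z}_2$. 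Once this equivariant character bookkeeping is carried out, the remaining cases reduce to the already-established coarse-moduli computation, so only the character-matching step at the two stacky points carries genuinely new difficulty.
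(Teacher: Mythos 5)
Your proposal is correct and follows essentially the same route as the paper: both read off $\operatorname{top}$ and $\operatorname{socle}$ from $\operatorname{Hom}^k_{D(\mathcal{Y})}(\Phi(\mathcal{O}_0\otimes\rho^{\lor}),\mathcal{O}_{\mathbf{y}})$ using the Fourier--Mukai images of Theorem \ref{stackfm}, with the twists $-\mathcal{B}_1$ versus $-\mathcal{B}_2$ separating $\rho_{n/2}$ and $\rho'_{n/2}$ at the stacky points. The only organizational difference is that the paper transports the computation to $D^{D_{2n}}(\mathbb{C}^2)$ via $\Psi(\mathcal{O}_{\mathbf{y}})=\mathcal{O}_{Z_y}^{\lor}\otimes K_{\mathbb{C}^2}[2]$ and Serre duality (so the equivariant bookkeeping is done on the explicit clusters $Z_y$), whereas you propose the equivalent local $\mathbb{Z}_2$-equivariant Ext computation on $X_1$.
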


\begin{proof}
	Now, we consider the derived equivalences $\Psi$ and $\Phi$ and compute in the level of the global quotient stacks. We use the derived equivalence in (7.1) of \cite{art1ishii} and let $\phi: D(\mathcal{Y}) \xrightarrow{\sim} D^{\mathbb{Z}_2}(X_1)$ be the derived equivalence between the coherent sheaves on $\mathcal{Y}$ and $\mathbb{Z}_2$-equivariant sheaves on $X_1$:
	\begin{align*}
		\operatorname{Hom}_{D(\mathcal{Y})}^k(\Phi(O_0 \otimes \rho_i^*), O_{\bold{y}}) &\cong \operatorname{Hom}_{D^{\mathbb{Z}_2}(X_1)}^k(\phi(\Phi(O_0 \otimes \rho_i^*)), \phi(O_{\bold{y}}))\\
		&\cong \operatorname{Hom}_{D^G(\mathbb{C}^2)}^k(O_0 \otimes \rho_i^*, \Psi(O_{\bold{y}})).
	\end{align*}
	It is imperative to notice here that $\phi(O_{\bold{y}})$ depends on whether the point is stacky or not.
	\[ \phi(O_{\bold{y}}) = \begin{cases} O_y & \mbox{ if $\bold{y}$ is a stacky point}\\
		O_y \oplus O_{g \cdot y} & \mbox{ if $\bold{y}$ is not a stacky point}
	\end{cases}\]
	Once again, from (7.2) of \cite{art1ishii}, where $Z_y$ is the subscheme of $\mathbb{C}^2$ corresponding to $\bold{y}$ and $\mathcal{F}^{\lor} := \operatorname{RHom}_{O_{\mathbb{C}^2}}(\mathcal{F},O_{\mathbb{C}^2})$ is the derived dual:
	\[ \Psi(O_{\bold{y}}) = \begin{cases} O_{Z_y}^{\lor} \otimes K_{\mathbb{C}^2}[2] & \mbox{ if $\bold{y}$ is a stacky point}\\
		(O_{Z_y}^{\lor} \oplus O_{Z_{g \cdot y}}^{\lor}) \otimes K_{\mathbb{C}^2}[2] & \mbox{ if $\bold{y}$ is not a stacky point}.
	\end{cases}\]
	By Serre duality:
	\begin{align*}
		\operatorname{Hom}_{D^G(\mathbb{C}^2)}^k(O_0 \otimes \rho_i^*, \Psi(O_{\bold{y}})) &= G\operatorname{-Hom}_{\mathbb{C}^2}^k(O_0 \otimes \rho_i^*, \Psi(O_{\bold{y}}))\\
		&\cong G\operatorname{-Hom}_{\mathbb{C}^2}^{2-k}(O_0 \otimes \rho_i \otimes det(\rho_{nat}), O_{Z_y} (\oplus O_{Z_{g \cdot y}}))\\
		&\cong G\operatorname{-Hom}_{\mathbb{C}^2}^{2-k}(O_0 \otimes \rho_i \otimes \rho_0', O_{Z_y} (\oplus O_{Z_{g \cdot y}})).
	\end{align*}
	We are now in the position to compile each of the equivalences:
	\begin{align*}
		\operatorname{Hom}_{D(\mathcal{Y})}^k(O_{\mathcal{F}}, O_{\bold{y}}) &\cong G\operatorname{-Hom}_{\mathbb{C}^2}^{2-k}(O_0 \otimes \rho_0', O_{Z_y} (\oplus O_{Z_{g \cdot y}}))\\
		&\mbox{ (for $i = 0$)}\\
		\operatorname{Hom}_{D(\mathcal{Y})}^k(O_{\mathcal{F}}(\mathcal{B}_1 - \mathcal{B}_2), O_{\bold{y}}) &\cong G\operatorname{-Hom}_{\mathbb{C}^2}^{2-k}(O_0 \otimes \rho_0, O_{Z_y} (\oplus O_{Z_{g \cdot y}}))\\
		&\mbox{ (for $i = 0'$)}\\
		\operatorname{Hom}_{D(\mathcal{Y})}^k(O_{\mathcal{E}_i}(-1), O_{\bold{y}}) &\cong G\operatorname{-Hom}_{\mathbb{C}^2}^{2-k}(O_0 \otimes \rho_i, O_{Z_y} (\oplus O_{Z_{g \cdot y}}))\\
		&\mbox{ (for $1 \leq i \leq m-1$)}\\
		\operatorname{Hom}_{D(\mathcal{Y})}^k(O_{\mathcal{E}_m}(-\mathcal{B}_3), O_{\bold{y}}) &\cong G\operatorname{-Hom}_{\mathbb{C}^2}^{2-k}(O_0 \otimes \rho_m, O_{Z_y} (\oplus O_{Z_{g \cdot y}}))\\
		&\mbox{ (for $i = m$; $n$ odd)}\\
		\operatorname{Hom}_{D(\mathcal{Y})}^k(O_{\mathcal{E}_m}(-\mathcal{B}_2), O_{\bold{y}}) &\cong G\operatorname{-Hom}_{\mathbb{C}^2}^{2-k}(O_0 \otimes \rho_{n/2}, O_{Z_y} (\oplus O_{Z_{g \cdot y}}))\\
		&\mbox{ (for $i = n/2$; $n$ even)}\\
		\operatorname{Hom}_{D(\mathcal{Y})}^k(O_{\mathcal{E}_m}(-\mathcal{B}_1), O_{\bold{y}}) &\cong G\operatorname{-Hom}_{\mathbb{C}^2}^{2-k}(O_0 \otimes \rho_{n/2}', O_{Z_y} (\oplus O_{Z_{g \cdot y}}))\\
		&\mbox{ (for $i = n/2'$; $n$ even)}.\\
	\end{align*}
	
	Setting $k=2$ in the above equivalences, we obtain a more refined structure of the socles over the quotient stack.
\end{proof}

\begin{remark}
	We can define the top and the socle of a $D_{2n}$-constellation on the stacky locus on the quotient stack as the $1/2$ of the $D_{2n}$-constellation on the coarse moduli space. In simpler terms, this is simply the $\mathbb{Z}_2$-invariant $\mathbb{Z}_n$-cluster corresponding to the fixed point of $\mathbb{Z}_n\operatorname{-Hilb}$ under the action of $\mathbb{Z}_2$. This is reflected in the proof of the proposition.
\end{remark}

\begin{example}
	In light of the previous remark, we end this paper by giving an explicit example for the dihedral group of order $8$ ($n=4$).
	
	Over the non-stacky locus, we can refer to the computations in \ref{opencons} (and all of the other open sets) and let $m = 2$.
	
	Over the stacky (irreducible) locus, we consider first $\overline{F}_{st} = O_{\mathbb{C}^2}/\langle x^3, y^3, xy, x^2 + y^2 \rangle$ (corresponding to the point $I_{n/2}(1:-1)$ in $\mathbb{Z}_n\operatorname{-Hilb}(\mathbb{C}^2)$). The $\mathbb{Z}_4$-cluster has $\mathbb{C}[\mathbb{Z}_4]$-basis $\{1, x, y, x^2\}$. Certainly, the socle of this $D_8$-constellation is generated by $x^2$. But by the relationship, $x^2 + y^2 = 0$ in this constellation, $x^2 \otimes \delta_1 = -y^2 \otimes \delta_1$, making the socle $\rho_2'$, where $\delta_1$ is the non-trivial representation in the proof of \ref{socmax}. Similarly, $\overline{F}_{st} = O_{\mathbb{C}^2}/\langle x^3, y^3, xy, x^2 - y^2 \rangle$ (corresponding to the point $I_{n/2}(1:1)$ in $\mathbb{Z}_n\operatorname{-Hilb}(\mathbb{C}^2)$) has socle $\rho_2$.
\end{example}

\section*{Acknowledgements}
The author expresses his greatest and deepest gratitude to Prof. Akira Ishii for the numerous discussions that were essential in the preparation of this article. The author also thanks the unknown referee for the comments and suggestions towards the improvement of the article. The author is partially supported by the grants JSPS KAKENHI Grant No. 19K03444 and JST SPRING Grant No. JPMJSP2125 (THERS Make New Standards Program for the Next Generation Researchers).


\begin{thebibliography}{99}
\bibitem{agv} D. Abramovich, T. Graber, and A. Vistoli, Gromov-Witten theory of Deligne-Mumford stacks, {\em Amer. J. Math.}, Vol. {\bf 130}, No. 5, (2008), pp. 1337-1398.
\bibitem{artverd} M. Artin and J.L. Verdier, Reflexive Modules over Rational Double Points, {\em Math. Ann.}, Vol. {\bf 270}, (1985), pp. 79-82.
\bibitem{bchm} C. Birkar, P. Cascini, C. Hacon, and J. McKernan, Existence of Minimal Models
for Varieties of Log General Type, {\em Journal of the American Mathematical Society}, Vol. {\bf 23}, No. 2, (2010), pp. 405-468.
\bibitem{bridgelandkingreid} T. Bridgeland, A. King, and M. Reid, The McKay correspondence as an equivalence of derived categories, {\em Journal of the American Mathematical Society},\/ Vol. {\bf 14}, (2001), pp. 535-554.
\bibitem{cadman} C. Cadman, Using stacks to impose tangency conditions on curves, {\em Amer. J. Math.}, Vol. {\bf 129}, No. 2, (2007), pp. 405-427.
\bibitem{crawishii} A. Craw and A. Ishii, Flops of $G$-Hilb and Equivalences of Derived Categories by Variation of GIT Quotient, {\em Duke Math. J.}, \/ Vol. {\bf 124}, (2004), no. 2 pp. 259-307.
\bibitem{gonzverd} G. Gonzalez-Sprinberg and J.L. Verdier, Construction géométrique de la correspondance de McKay. (French) [Geometric construction of the McKay correspondence] {
	\em Ann. Sci. École Norm. Sup.} {\bf 4} 16 (1983), no. 3, 409–449 (1984).
\bibitem{hartshorne1} R. Hartshorne, \textit{Algebraic Geometry}, Graduate Texts in Mathematics, 52, Springer-Verlag, New York, 1977.
\bibitem{art1ishii} A. Ishii, On the McKay Correspondence for a finite 
small subgroup of $GL(2, \mathbb{C})$, {\em J. Reine Angew. Math.}, \/ Vol. {\bf 549}, (2002), pp. 221-233.
\bibitem{art2ishii} A. Ishii, $G$-constellations and the maximal resolution of a quotient surface singularity, {\em Hiroshima Mathematical Journal}, \/ Vol. {\bf 50}, (2020), pp. 375-398.
\bibitem{ishiiitonolla} A. Ishii, Y. Ito and A. Nolla de Celis, On $(G/N)\operatorname{-Hilb}$ of $N\operatorname{-Hilb}$, {\em Kyoto J. Math.}, \/ Vol. {\bf 53}, (2013), pp. 91-130.
\bibitem{ishiiueda} A. Ishii and K. Ueda, The Special McKay
Correspondence and Exceptional Collections, {\em Tohoku Math. J.}, \/ Vol. {\bf 67}, (2015), pp. 585-609.
\bibitem{itonak1} Y. Ito and I. Nakamura, McKay Correspondence and Hilbert Schemes, \textit{Proc. Japan Acad.} {\bf 72} (1996), 135--138.
\bibitem{itonak2} Y. Ito and I. Nakamura, Hilbert Schemes and Simple Singularities, {\em New trends in algebraic geometry (Warwick, 1996)}, 151-233, London Math. Soc. Lecture Notes Ser., 264, Cambridge University Press, Cambridge, 1999.
\bibitem{kawamata} Y. Kawamata, Derived McKay Correspondence for $GL(3,\mathbb{C})$, {\em Adv. Math.}, \/ Vol. {\bf 328}, (2020), pp. 1199-1216.
\bibitem{king} A. King, Moduli of Representations of Finite-Dimensional Algebras, \textit{Quart. J. Math. Oxford Ser.} {\bf 45} (1994), 515--530.
\bibitem{kollarmori} J. Kollar and S. Mori, \textit{Birational Geometry of Algebraic Varieties}, Cambridge Tracts in Mathematics, 134. Cambridge University Press, 1998.
\bibitem{kollarshep} J. Kollar and N. Shepherd-Barron, Threefolds and Deformations of Surface Singularities, {\em Inventiones Mathematicae},\/ Vol. {\bf 91}, (1988), no. 2, pp. 299-338.
\bibitem{nollasekiya} A. Nolla de Celis and Y. Sekiya, Flops and Mutations for Crepant Resolutions of Polyhedral Singularities, {\em Asian Journal of Mathematics},\/ Vol. {\bf 21}, (2017), no. 1, pp. 1-45.
\bibitem{olsson} M. Olsson, \textit{Algebraic spaces and stacks}, American Mathematical Society Colloquium Publications, 62. American Mathematical Society, 2016.
\bibitem{potter} R. Potter, \textit{Derived Categories of Surfaces and Group Actions}. Thesis. University of Sheffield. 2017.
\bibitem{reid}  M. Reid, \textit{McKay correspondence}, Proc. of algebraic geometry symposium (Kinosaki, Nov 1996), T. Katsura (Ed.), pp. 14–41.
\bibitem{reimenschneider} O. Reimenschneider, Die Invarianten der endlichen Untergruppen von $\operatorname{GL}(2, \mathbb{C})$. {\em Math. Z.} 153, 37-50 (1977).
\bibitem{vistoli} A. Vistoli, Intersection theory on algebraic stacks and on their moduli spaces. {\em Inventiones Mathematicae} {\bf 97}, (1989), pp. 613-670.
\bibitem{wunram} J. Wunram, Reflexive Modules on Quotient Surface Singularities. {\em Mathematische Annalen} {\bf 279}, (1988), pp. 583-598.
\bibitem{yamagishiabelian} R. Yamagishi, \textit{Moduli of $G$-constellations and crepant resolutions I: the abelian case}, Advanced Studies in Pure Mathematics 88, 2023, McKay Correspondence, Mutation and Related Topics, pp. 159-193.
\bibitem{yamagishinonabelian} R. Yamagishi, \textit{Moduli of $G$-constellations and crepant resolutions II: the Craw-Ishii conjecture}, preprint, math.AG 2209.11901, 46 pp. To appear in Duke Mathematical Journal.
\end{thebibliography}
\end{document}